\newcommand{\Ls}{{\widetilde{L}}}
\newcommand{\uLs}{{\underline{\widetilde{L}}}}
\newcommand{\Bs}[1]{{\widetilde{S}_{#1}}}
\newcommand{\drs}{{\partial_{r^*}}}
\newcommand{\dts}{{\partial_{t^*}}}
\newcommand{\wtt}{\widetilde{t}}
\newcommand{\wtx}{\widetilde{x}}
\newcommand{\wpa}{\widetilde{\partial}}
\newcommand{\dxsi}[1]{{\widetilde{\partial}_{{#1}}}}
\newcommand{\Dxsi}[1]{{\widetilde{D}_{#1}}}
\newcommand{\wt}{\widetilde}
\newcommand{\wopa}{\widetilde{\slashed\partial}}
\newcommand{\opar}{{\slashed\partial}}
\newcommand{\ops}[1]{{\widetilde{\slashed{\partial}}_{#1}}}
\newcommand{\Ds}{{\widetilde{\slashed{D}}}}
\newcommand{\Os}[1]{{\widetilde{\Omega}_{{#1}}}}
\newcommand{\Ss}{{\widetilde{S}}}
\newcommand{\okos}{\widetilde{K^s}{}}
\newcommand{\us}[1][{}]{{u^{*{#1}}}}
\newcommand{\uls}[1][{}]{{\underline{u}^{*{#1}}}}
\newcommand{\abus}[1][{}]{{|u^{*}|^{#1}}}
\newcommand{\tm}[1][{}]{{\tau_{-}^{#1}}}
\newcommand{\tp}[1][{}]{{\tau_{+}^{#1}}}
\newcommand{\tz}[1][{}] {{\tau_{0}^{#1}}}
\newcommand{\omu}{{\overline{\mu}}}
\newcommand{\mhat}{{\widehat{m}}}
\newcommand{\Lie}[1]{\mathcal{L}_{#1}}
\newcommand{\OLie}[1]{\widetilde{\mathcal{L}}_{#1}}
\newcommand{\LieC}[2][{ }]{{\mathcal{L}_{#2}^{\mathbb{C}{#1}}}}
\newcommand{\mcL}{{\mathcal{L}}}
\newcommand{\mcT}{{\mathcal{T}}}
\newcommand{\mcU}{{\mathcal{U}}}
\newcommand{\mcE}{{\mathcal{E}}}
\newcommand{\T}{\mathcal{T}}
\newcommand{\mbL}{{\mathbb{L}}}
\newcommand{\wX}{\widetilde{X}}
\newcommand{\wY}{\widetilde{Y}}
\newcommand{\DT}[1]{{^{({#1})}}\pi}
\newcommand{\TDT}[1]{{{^{({#1})}\widetilde{\pi}}}}
\newcommand{\TDTD}[1]{{{^{({#1})}\widetilde{\Pi}}}}
\newcommand{\DTD}[1]{{{^{({#1})}{\Pi}}}}
\newcommand{\lnorm}{\left\lVert}
\newcommand{\rnorm}{\right\rVert}
\newcommand{\lrnorm}[1]{{\left\lVert{#1}\right\rVert}}
\newcommand{\lrangle}[1]{{\left\langle{#1}\right\rangle}}
\newcommand{\intsig}[1]{{\int_{\Sigma_{#1}}}}
\newcommand{\slk}{{\sum_{\leq k}}}
\newcommand{\slj}{{\sum_{\leq j}}}
\newcommand{\slkI}[1]{{\sum_{\substack{\leq k \\ {#1}}}}}
\newcommand{\al}{{\alpha}}
\newcommand{\ual}{{\underline{\alpha}}}
\def\bF{\mathbf{F}}
\def\bphi{\boldsymbol{\phi}}
\def\bA{\mathbf{A}}
\def\bJ{\mathbf{J}}
\def\bG{\mathbf{G}}
\newcommand{\sgn}{{\text{sgn}}}
\newcommand{\ochi}{{\overline{\chi}}}
\newcommand{\wg}[1][{}]{{w_\gamma^{{#1}}}}
\newtheorem{Theorem}{Theorem}[section]
\newtheorem{Lemma}[Theorem]{Lemma}
\newtheorem{Proposition}[Theorem]{Proposition}
\newtheorem{Remark}[Theorem]{Remark}
\numberwithin{equation}{section}
\title{Global Stability for Charged Scalar Fields in an Asymptotically Flat Metric in Harmonic Gauge}
\date{}
\author{Christopher Kauffman}
\begin{document}
\maketitle
\begin{abstract}
We prove global stability for the Charge-Scalar Field system on a background spacetime which is close to $1+3$-dimensional Minkowski space and whose outward light cones converge to those for the Schwarzschild metric at null infinity. The key technique to this proof is the use of a modified null frame, depending only on the mass $M$ of the metric, which captures the asymptotic behavior of the metric at future null infinity. Our results are analogous to results obtained in Minkowski space by Lindblad and Sterbenz in \cite{LS} up to a change in coordinates, and will in the sequel be used to prove the full structure of the Einstein-Charge scalar field system in these modified harmonic coordinates.
\end{abstract}
\tableofcontents
\section{Introduction}
		\subsection{The Maxwell-Klein-Gordon System}
Given a background Lorentzian manifold $(\mathcal{M}, g)$ and a real one-form $\bA$ on $\mathcal{M}$, we define the complex covariant derivative
\begin{equation}
D_\alpha = \nabla_\alpha + i\bA_\alpha,
\end{equation}
where $\nabla$ is the Levi-Civita connection on $g$.
Then, defining the two-form $\bF = d\bA$ and a complex scalar function $\bphi$, we say $(\bF, \bphi)$ is a solution of the Maxwell-Klein-Gordon system with potential $\bA$ if
\begin{subequations}
\label{MKG}
\begin{align}
\label{MKGWave}\Box_g^{\mathbb{C}}\bphi &= D^\alpha D_\alpha \bphi = 0, \\
\label{currentvector}
\nabla^\beta \bF_{\alpha\beta} &= \bJ[\bphi]_\alpha = \mathfrak{I}\left(\bphi\overline{D_\alpha\bphi}\right), \\
\nabla^\beta( \star\bF)_{\alpha\beta} &= 0.
\end{align}
\end{subequations}
Here and in what follows, $\mathfrak{R}, \mathfrak{I}$ denote the projections onto the real and imaginary parts. We call $\Box_g^{\mathbb{C}}$ the complex wave operator, and $\bJ^\alpha$ is the current vector. The coupling between $\bphi$ and $\bF$ arises from the right hand side of \eqref{currentvector} along with the commutator
\begin{equation}
\label{commrelation}
[D_\alpha, D_\beta]\psi = i\bF_{\alpha\beta}\psi.
\end{equation}
The energy-momentum tensor of this system is
\begin{equation}
\label{FullEMTensor}
T_{\alpha\beta} = T[\bphi, \bF]_{\alpha\beta} = \mathfrak{R}\left(D_\alpha\bphi\overline{D_\beta\bphi} - \frac12g_{\alpha\beta}D_\gamma\bphi\overline{D^\gamma\bphi}\right) + \bF_{\alpha\gamma}\bF_\beta^{\,\,\gamma} - \frac14g_{\alpha\beta}\bF_{\gamma\delta}\bF^{\gamma\delta}.
\end{equation}
We may also separate it into its scalar and field portions, respectively
\begin{subequations}
\begin{align}
T[\bphi]_{\alpha\beta}&= \mathfrak{R}\left(D_\alpha\bphi\overline{D_\beta\bphi} - \frac12 g_{\alpha\beta}D_\gamma\bphi\overline{D^\gamma\bphi}\right), \label{def:EMphi}\\
T[\bF]_{\alpha\beta} &= \bF_{\alpha\gamma}\bF_\beta^{\,\,\gamma} - \frac14g_{\alpha\beta}\bF_{\gamma\delta}\bF^{\gamma\delta}. \label{def:EMF}
\end{align}
\end{subequations}
These satisfy the identities
\begin{equation}
\nabla^\beta T[\bphi]_{\alpha\beta} = \bF_{\alpha\gamma}\bJ^\gamma, \qquad \nabla^\beta T[\bF]_{\alpha\beta} =  -\bF_{\alpha\gamma}\bJ^\gamma, \qquad g^{\alpha\beta}T[\bF]_{\alpha\beta} = 0.
\end{equation}
The scalar divergence identity follows from \eqref{commrelation} along with the identity
\begin{equation}
\label{eq:LeibnizRule}
\nabla_\alpha(\phi\overline{\psi}) = D_\alpha\phi\overline\psi + \phi\overline{D_\alpha\psi},
\end{equation}
and the corresponding vector divergence identity follows from antisymmetry along with
\begin{equation}
d\bF = 0.
\end{equation}
In the sequel we will use our stability results to couple the system \eqref{MKG} to Einstein's field equations,
\begin{equation}\label{def:EFE}
R_{\mu\nu} = T_{\mu\nu} - \tfrac12g_{\mu\nu}g^{\alpha\beta}T_{\alpha\beta}.
\end{equation}

Given the system \eqref{MKG} along with suitable initial conditions on $\bF$ and $\bphi$, one has some freedom in the choice of the potential $\bA$, called the gauge, which it is not necessary to resolve. For a given real function $\psi$, if $(\bF, \bphi)$ solves \eqref{MKG} with potential $\bA$, then $(\bF, e^{-i\psi}\bphi)$ solves \eqref{MKG} with potential $\bA + d\psi$. From a historical perspective, fixing a gauge has aided in proving local and global existence results, even for the more general Yang-Mills Higgs equations. Eardley and Moncrief \cite{EM} proved local and global existence using the temporal gauge $\bA_0 = 0$, and subsequently, Klainerman and Machedon \cite{KM}, \cite{KM2} extended their result using elliptic estimates arising from the Coulomb gauge $\nabla_x\cdot\bA = 0$. However, these results did not rule out polynomially growing energy, so their use in establishing decay was limited. Improved estimates for small initial data were established later by Shu \cite{S91}, \cite{S92} in the massless case, and Psarrelli \cite{Ps} in the massive case, in which \eqref{MKGWave} is replaced with an appropriate Klein-Gordon type equation. Lindblad and Sterbenz \cite{LS}, and later Bieri, Miao, and Shashahani \cite{BMS} built on the results of Shu to prove global stability for small initial data in a gauge-invariant way. Decay results for large Maxwell data were established by Yang \cite{Y152, Y151}; however, coupling these to Einstein's equations is somewhat more difficult.

Our work builds on the techniques in \cite{LS}; however, one runs into the problem that the conformal Morawetz- and Stricharz-type estimates used are in a sense unstable with respect to metric perturbations satisfying the harmonic coordinate condition
\[
\Box_g(x^\mu) = 0.
\]
The stability of components of $g$ in these coordinates was shown by Lindblad and Rodnianski \cite{LR10} and was extended to systems with electromagnetic components by Loizelet \cite{Loize2008} and later by Speck \cite{S12}. However, the estimates they used required less precise decay rates on components.

Alternatively, a geometric approach to this system, as in the landmark work of Christodoulou and Klainerman \cite{CK93} and its extension to the Maxwell system by Zipser \cite{Z00} would alleviate this issue somewhat by using a true null foliation, but at the immediate cost of greatly increased complexity.
We instead use estimates on the metric in harmonic coordinates found by Lindblad \cite{L15} to reframe this problem in generalized wave coordinates, with a corresponding modified null frame.

\subsection{The Background Spacetime}
We consider spacetimes $(\mathcal{M}, g)$ close to the Minkowski spacetime on a time interval $[0,T]$, in the sense that $g$ satisfies certain $L^2$ and $L^\infty$ estimates consistent with small-data solutions to Einstein's Vacuum Equations in harmonic gauge. We decompose the metric as follows:
\begin{equation}
\label{Metric}
g_{\alpha\beta} = m _{\alpha\beta} + h^0_{\alpha\beta} + h^1_{\alpha\beta}, \qquad h^0_{\alpha\beta} = M \chi(\tfrac{r}{t+1})r^{-1}\delta_{\alpha\beta}
\end{equation}
where $M$ is the ADM mass for $g$, $\chi = \chi(y)$ is a smooth cutoff function equal to 1 for $y\geq \frac34$ and 0 for $y\leq \frac14$, such that $\partial\chi$ decays like $t^{-1}$, and $h^1$ is a small error which decays initially like $\langle r\rangle^{-3/2-\alpha}$ for some $\alpha > 0$. Under this decomposition, our decay bounds take the form
\begin{subequations}
\label{MetLIint}
\begin{align}
 M &< \varepsilon_g, \\
\langle t+r^*\rangle|\overline{\wpa}\Lie{\wX}^Ih^1|+ \langle t-r^*\rangle|\wpa\Lie{\wX}^Ih^1|+|\Lie{\wX}^Ih^1| &< \varepsilon_g\tp[-1+\delta], \\
\langle t-r^*\rangle|\wpa\Lie{\wX}^I h^1|_{\mcL\mcT} + |\Lie{\wX}^I h^1|_{\mcL\mcT} &< \varepsilon_g\tm[\gamma]\tp[-1-\gamma+\delta].
\end{align}
\end{subequations}
Here, $k\geq 11$ is an integer, $I$ is a multiindex with $|I|\leq k-6$, $\tau_{\pm}$ are defined in \eqref{taupm}, the subscript $\mcL\mcT$ is defined in \eqref{def:nullvectorsets}, the set $\wX \in \{\wpa_\alpha, \Os{\alpha\beta}, \Ss\}$ is defined in \eqref{LorentzFields}, $\gamma > \frac12$, and $\varepsilon_g>0$ is a small constant.

 Additionally, for a small constant $\mu > 0$, we may define the metric weight
\begin{equation}\label{def:weight}
\wg = \wg(r^* - t) =\begin{cases}
1+ (1+| r^* - t|)^{-2\mu}  & r^* \leq t, \\
1 +(1+| r^* - t|)^{1+2\gamma} & r^* \geq t,
\end{cases}
\end{equation}
where $r^*$ is defined in \eqref{def:rstar}. Then, for $|I| \leq k$ we assume the $L^2$ bounds
\begin{subequations}
\label{MetL2int}
\begin{align}
\label{L21int}
\left\lVert|\partial\Lie{\wX}^Ih^1|\wg[1/2]\right\rVert^2_{L^2(\mathbb{R}^3)} + \left\lVert\tm[-1]|\Lie{\wX}^Ih^1|\wg[1/2]\right\rVert^2_{L^2(\mathbb{R}^3)}&\leq \varepsilon_g^2 (1+t)^{\delta}, \\
\label{L22int}
\left\lVert\big(|\partial\Lie{\wX}^Ih^1|_{\Ls\Ls}+ |\overline\partial\Lie{\wX}^Ih^1|\big)(\wg\!\!')^{1/2}\right\rVert^2_{L^2([0,T]\times\mathbb{R}^3)} &\leq \varepsilon_g^2(1+T)^{\delta},\\
\label{L23int}
\left\lVert{\!\langle r^*\!\!\!-\!t\rangle}^{-1-s}{\!\langle r^*\!\!\!+\!t\rangle}^{-1+s}|\Lie{\wX}^Ih^{1\!}|_{\mathcal{L}\mathcal{L}}
(w_\gamma')^{1/2}
\right\rVert_{L^2([0,T]\!\times\mathbb{R}^3)}&\leq \!\varepsilon_g
\end{align}
\end{subequations}
for $|I| \leq k$.
The fields we use are adaptations of the traditional commutator fields in Minkowski space. In addition to \cite{L15}, they have been used by Oliver in \cite{O16}, and later by Sterbenz and Oliver in \cite{OS} for a more general class of metrics. In the sequel we will show that these bounds follow from a bootstrap assumption for the Einstein field equations in harmonic coordinates,
\begin{equation}\label{eq:EECoupled}
\Box_gg_{\mu\nu} = {P}[\partial g, \partial g]_{\mu\nu}(g) + T_{\mu\nu} - \frac12g_{\mu\nu}\text{tr}_g(T),
\end{equation}
where ${P}$ is quadratic in derivatives of $g$ and behaves nicely in the null decomposition. $T$ will generally be smaller and decay faster than corresponding components of $P$, which appears in the vacuum equations, so the background spacetime of the coupled system may be approximated by solutions of the vacuum equations.

\subsection{The Modified Coordinates and Weights}
The presence of the mass term in \eqref{Metric} means that, when attempting to establish a conformal Morawetz-type estimate, we cannot simply treat the background spacetime as a perturbation of Minkowski space. Instead, we use an interpolated tortoise coordinate $r^*$ and approximate optical functions $\us = t + r^*$, $\uls = t - r^*$. Since $h^0$ is supported when $r \gg M$, we do not have to worry about trapped geodesics, so we may use a conformal estimate rather than the purely radial estimate of \cite{DR}. This allows for a more intuitive treatment of the charged part of $\bF$. Similar estimates for Maxwell's equations on Schwarzschild have been carried out by Anderson and Blue \cite{AB}, and Sterbenz and Tataru \cite{ST}, as well as for certain quasilinear wave equations by Lindblad and Tohaneanu \cite{LToh}. 

Additionally, decay estimates on $h^1$ in the vacuum case mean that we cannot hope to recover the full conformal Morawetz estimate. We will instead prove a fractional Morawetz estimate, as in \cite{LS}, with weights $\us[2s], \uls[2s]$ which depend on the initial decay of $h^1$. We show the base estimate, with a discussion of the modified fields and the necessity of peeling estimates for $h$, in section \ref{ModelMorawetz}. A similar estimate for wave equations was shown by Lindblad and Schlue in \cite{LSch}

We define the adapted tortoise coordinate
\begin{equation}\label{def:rstar}
r^* = r + M\chi\ln(r),
\end{equation}
where $\chi$ is as in \eqref{Metric}.
It follows that $r^* = r$ in the far interior $r < (t+1)/4$ and $r^* = r+M\ln(r)$ for $r > 3(t+1)/4$. We then define the modified coordinates $(\wtt, \wtx)$ and approximate optical functions $\us, \uls$ by:
\begin{equation}\label{def:newcoords}
\wtt = \wtx^0 = t^* = t, \qquad \wtx^i = \omega^ir^*, \qquad\us = t - r^*, \qquad \uls = t + r^*
\end{equation}
It follows from \eqref{MetLIint} that $g^{\alpha\beta}\partial_\alpha \us\partial_\beta\us$ decays more rapidly along the light cone than $t-r$. Additionally, we can define the optical weights
\begin{equation}
\label{taupm}
\tp[2] = (1+\uls[2]) \qquad \tm[2] = (1+\us[2]) \qquad \tz = \tm/\tp.
\end{equation}

Taking $\partial_r = \omega^i\partial_i$ and $\opar_i = \partial_i - \omega_i\partial_r$, we can define
\begin{equation}\label{def:newderivatives}
\drs = \tfrac{1}{\partial_r(r^*)}\partial_r, \qquad \dts = \dxsi{0} = \partial_t - \partial_t(r^*)\drs, \qquad \dxsi{i} = \omega_i \drs + \tfrac{r}{r^*}\opar_i, \qquad \ops{i} = \tfrac{r}{r^*}\opar_i.
\end{equation}
We have a natural modified null frame
\begin{equation}\label{def:nullframe}
\Ls = \dts + \drs, \qquad \uLs = \dts - \drs, \qquad \Bs{i} = \tfrac{r}{r^*}S_i,
\end{equation}
where $S_i = \{S_1, S_2\}$ are piecewise defined fields forming an orthonormal frame tangent to the sphere (in the Minkowski metric). We define the sets
\begin{equation}\label{def:nullvectorsets}
\mcL = \{\Ls\}, \qquad \mcT = \{\Ls, \Bs{1}, \Bs{2}\}, \qquad \mcU = \{\Ls, \uLs, \Bs{1}, \Bs{2}\},
\end{equation}
and use the following notation for partial norms
\[
|T|_{\mathcal{X}\mathcal{Y}} = \sup_{X \in \mathcal{X}, Y \in \mathcal{Y}} |T_{XY}|.
\]
For norms of tensors where vector fields are not specified, we use the full null frame $\mcU$, e.g., for $(0,2)$-tensors $T$, we have
\[
|T| = |T|_{\mcU\mcU}
\]

Our modified null frame has two advantages: First, components of $g$ and its Lie derivatives have improved bounds in the null decomposition, and second, these vectors commute well with the modified Lorentz fields used in \cite{L15}, which is necessary in order to pass the nice weak null structure to derivatives. The peeling estimates obtained from this modification mirror those for Minkowski, and are necessary in order to close the argument.

\subsection{The Main Theorem}

Before stating our result, we define certain tensorial quantities which appear. For a generic 2-form $\bG$, we define the modified null decomposition:
\begin{subequations}\label{def:chargedecomp}
\begin{alignat}{4}
\al_i[\bG] &= \bG_{\Ls\Bs{i}} & \qquad\ual_i[\bG] &= \bG_{\uLs \Bs{i}} \\
\rho[\bG] &= \frac12\bG_{\uLs\Ls} & \sigma[\bG] &= \frac12\bG_{\Bs{1}\Bs{2}}
\end{alignat}
\end{subequations}
When there is no ambiguity, we remove the explicit dependence on $\bG$.
Since the tangential terms are not uniquely defined, the following terms often show up in our calculations.
\begin{equation}\label{def:angularnorms}
|\Ds\phi|^2 = |D_{\Bs{1}}\phi|^2 + |D_{\Bs{2}}\phi|^2 \qquad |\al|^2 = |\al_1|^2 + |\al_2|^2 \qquad |\ual|^2 = |\ual_1|^2+|\ual_2|^2.
\end{equation}

We additionally define the electromagnetic decomposition
\begin{equation}
\mathbf{E}_i[\bG] = \bG_{0i}, \qquad \mathbf{B}_i[\bG] = (\star\bG)_{0i},
\end{equation}
where $\star\bG$ is the Hodge dual of $\bG$.
We can decompose $\mathbf{E}$ into its divergence-free and curl-free components, $\mathbf{E}_{df}$ and $\mathbf{E}_{cf}$ respectively.

Before stating our result, we define the norms governing our initial conditions. For a generic $(0,j)$-tensor $\mathbf{T}$, we define:

\begin{equation}
\left\lVert \mathbf{T}\right\rVert^2_{H^{k, s_0}(\mathbb{R}^3)} = \sum_{|I| \leq k} \sum_{\alpha_i \in (0,3)}\int_{\mathbb{R}^3} (1+r^2)^{s_0+|I|}|\underline\nabla^I \mathbf{T}(\dxsi{\alpha_1}, ...\dxsi{\alpha_j})|^2 \, dx,
\end{equation}
where $I$ is a multiindex. Here, $\underline\nabla$ and $\underline{D}$ are the covariant derivatives restricted to time slices.

Likewise, for a complex scalar field $
\phi$, we have the corresponding quantity
\begin{equation}
\left\lVert \phi\right\rVert^2_{H^{k, s_0}(\mathbb{R}^3)} = \sum_{|I| \leq k} \int_{\mathbb{R}^3} (1+r^2)^{s_0+ |I|}|\underline{D}^I \phi|^2 \, dx.
\end{equation}

\begin{Theorem}
\label{Main}
Take constants $s, s_0, \gamma, \mu, \delta, {\overline{\mu}}$ such that $\frac12 < s < 1 < s_0 < 3/2$, $2s-1  < \gamma <1$, $\gamma>1/2$, $0< \mu < 1/2$, and $0 < 4\delta, 4\overline{\mu} < \min(1-2\mu, s-\frac12, 1-s, \gamma-\frac12, 1-\gamma, 1+\gamma-2s)$. Additionally, take an integer $k \geq 11$.

There exists constants $\varepsilon_0$, $\varepsilon_1> 0$ such that if the metric, in the decomposition \eqref{Metric}, satisfies \eqref{MetLIint} and \eqref{MetL2int} for $\varepsilon_g < \varepsilon_1$, and if we take initial conditions $\mathbf{E}_0, \mathbf{B}_0, \bphi_0, \,\,\dot{\!\!\bphi}_0$ for $\bF$ and $\bphi$ satisfying

\begin{equation}\label{def:IC}
\left\lVert \mathbf{E}_{0df} \right\rVert_{H^{k, s_0}(\mathbb{R}^3)} + \left\lVert \mathbf{B}_0 \right\rVert_{H^{k, s_0}(\mathbb{R}^3)}+ \left\lVert D\bphi_0 \right\rVert_{H^{k, s_0}(\mathbb{R}^3)} + \lVert\,\,\dot{\!\!\bphi}_0 \rVert_{H^{k, s_0}(\mathbb{R}^3)} < \varepsilon
\end{equation}
at time $t = 0$ for $\varepsilon < \varepsilon_0$, then solutions to \eqref{MKG} exist for all time, with
\[
\mathbf{E}[\bF](0,x) = \mathbf{E}_{0}, \qquad \mathbf{B}[\bF](0,x) = \mathbf{B}_0, \qquad \bphi(0, x) = \bphi_0, \qquad D_t \bphi (0,x) =  \,\,\dot{\!\!\bphi}_0
\]
 and satisfy the bounds
\begin{subequations}
\begin{align}
|\alpha| + \left|\tfrac{1}{r^*}D_{\Ls}(r^*\bphi)\right|\chi_{r>((t+2)/2)} + |D_{\Ls}\bphi|\chi_{r<((t+2)/2)} &\lesssim \varepsilon\tp[-s-3/2], \\
|\ual| + |D_{\uLs}\bphi| &\lesssim \varepsilon\tp[-1]\tm[-1/2-s], \\
|\sigma|  &\lesssim \varepsilon\tp[-1-s]\tm[-1/2],\\
|\slashed{D}\bphi| &\lesssim \varepsilon\tp[-2]\tm[1/2-s],\\
|\rho| &\lesssim \varepsilon\left(\tp[-1-s]\tm[-1/2]\chi_{r^* < t} + \tp[-2]\chi_{r^* \geq t}\right), \\
|\bphi| &\lesssim \varepsilon\tp[-1]\tm[1/2-s],
\end{align}
\end{subequations}
\end{Theorem}
\begin{Remark}
More precise bounds, are given in Theorem \ref{Bootstrap}.
\end{Remark}

We outline the proof, which spans the remainder of this paper.

Section 2 opens with definitions and notation that we will use later on, and the bulk of the section is devoted to analytical identities and estimates which follow from our bounds on $h^1$. We conclude this section by proving several Morawetz estimates which provide motivation for the particular form of the conformal energy estimates we use, including the fractional Morawetz estimate, the modified null frame, and the weight $w$.

In sections 3 and 4, we establish fractional Morawetz-type estimates which will be used to bound derivatives of $\bF^1$ and $\bphi$ respectively, which are roughly adapted from \cite{LS}.

Sections 5 and 6 establish $L^\infty$ estimates on field quantities. These are straightforward weighted Klainerman-Sobolev estimates, with some additional care taken to account for the contribution of the charge and certain error terms which arise from the fact that $u^*$ is only an approximate optical function. Additionally, we set up a Strichartz-type estimate which will be useful to close the result.

In section 7 we prove the main theorem, up to certain bounds on commutator terms which we show in sections 8 and 9.

Section 10 is an appendix in which we prove some common analytical estimates we use.

%	\subsection{Comparison to Previous Works}
%		\input{./ChJuly/Comparison.tex}
	\subsection{Acknowledgements}
		I would like to thank Hans Lindblad first and foremost for his invaluable guidance, and to everybody who has helped me with valuable advice and conversations, including but not limited to Gustav Holzegel, Chenyun Luo, Dan Ginsberg, Tim Candy, and Volker Schlue, in addition to support through ERC consolidator grant 772249 and NSF Grant DMS-1500925.
\section{Notation and Preliminary Identities}
	\subsection{General Notation}
		Here and in what follows, we will use $\bF$ and $\bphi$ specifically to refer to solutions of the Maxwell-Klein-Gordon system. We will typically use (nonboldface) $\phi$ and $\psi$ to refer to generic scalar functions, and $\bG$ to refer to generic 2-forms. With some overload of notation, we define
\begin{equation}
\bJ[\psi]_\alpha = \mathfrak{I}(\psi \overline{D_\alpha\psi}), \qquad\bJ[\bG]_\alpha = \nabla^\beta \bG_{\alpha\beta}.
\end{equation}

As we will be working in two different coordinate systems, we outline the differences here in order to ensure clarity. When we are working in harmonic coordinates, we will use Greek indices $\alpha, \beta, \hdots$ to refer to components with respect to this frame.

It will occasionally be simpler to describe concepts in terms of the generalized harmonic coordinates $(\wtt, \wtx)$, defined in \eqref{def:newcoords}. We will use $a, b, c, d$ to refer to components in the modified frame. We will use $\widetilde{g}$, $\widetilde{h}^1$, $\widetilde{\Gamma}$ to refer to components of corresponding quantities. For purely geometric quantities, and in the null decomposition, we will use them interchangeably.

We will also use $i, j, k, l$ to refer to spatial variables, which are raised and lowered with respect to $m$ unless otherwise specified.

Indices are raised and lowered with respect to the metric $g$, except for the background metrics $m$ and $\mhat$, for which we will use the inverse metric, with the exceptions of section \ref{MM1}, for which we will use the Minkowski metric, and section \ref{MM23} and a portion of section 4, for which we will use the metric $\mhat$.

A sharp ($\sharp$) or flat ($\flat$) appearing in a Lie derivative denotes that we have raised or lowered (respectively) the index before applying the Lie derivative.

Given a covector $\omega_\alpha$ and a vector $X$ with components $X^\alpha$, we say
\[
\omega_X = \omega(X) = X^\alpha\omega_\alpha.
\]

We can extend this to $(k,0)$-tensors, so that we use $F_{XY}$ and $F(X,Y)$ interchangeably. Similarly, given a frame $\{X_i\}$, we define the decomposition of a $(0,k)$ tensor $G$ using
\begin{equation}
G^{\alpha_1\hdots \alpha_k} = \sum_{i_1, \hdots i_k = 1}^4 G^{X_{i_1} \hdots X_{i_k}}X_{i_1}^{\alpha_1}\hdots X_{i_k}^{\alpha_k}
\end{equation}

 Additionally, we define the contraction operator $i_{\wX}$ such that for a $2$-form $\bG$, $(i_{\wX}\bG)(\wY) = \bG(\wX, \wY).$

We take $a \lesssim b$ to mean 
\[a \leq C_{s, s_0, \delta, \gamma, \mu, {\overline{\mu}}, k}b,\]
 and similarly, $a \approx b$ means $a\lesssim b, b\lesssim a$. In particular, $C$ may always be chosen independently of $\varepsilon, \varepsilon_g$, and $T$, for suitable $\varepsilon_0, \varepsilon_1$. As we will be revising the values of $C$ and $\varepsilon_1$ throughout the course of the proof, we include the following requirement: If a statement is true for a given choice of $\varepsilon_1$, then it is true for all $\varepsilon'_1\in (0, \varepsilon_1]$ without changing the implicit constant. Therefore, after every step, we may increase $C$ and decrease the $\varepsilon$ quantities accordingly without jeopardizing our results.

 Norms with a numerical subscript $p$ will denote the spacetime norm:
\[
\lnorm\phi\rnorm_p = \lnorm\phi\rnorm_{L^p([0,T]\times\mathbb{R}^3)}.
\]
Norms on other domains will be unambiguously denoted. We will use $\Sigma_\tau$ to mean $\{t = \tau\}\times\mathbb{R}^3$.
	\subsection{Modified Coordinates and Vector Fields}
		Recalling the definition \eqref{def:newcoords}, we define the coordinate transformation symbols
\begin{equation}\label{ADef}
A_\alpha^a = \partial_\alpha\wt{x}^a, \qquad \wt{A}_a^\alpha = \wpa_a x^\alpha,
\end{equation}
so, e.g. 
\[
\partial_\alpha = A_\alpha^a \wpa_a, \qquad \wpa_a = \wt{A}_a^\alpha\partial_\alpha.
\]
Straightforward calculation gives
\begin{subequations}\label{def:Asymbols}
\begin{align}
A_\alpha^0 &= \delta_\alpha^0, \\
A_0^i &= -M\omega^i\tfrac{r}{(t+1)^2}\chi'\ln(r), \\
A_i^j &= \delta_i^j + \tfrac{M\chi\ln r}{r}(\delta_i^j - \omega_i\omega^j) + \omega_i\omega^j\Big(\tfrac{M}{t+1}\chi'\ln r + \tfrac{M\chi}{r}\Big).
\end{align}
\end{subequations}
Then the following bounds hold:
\begin{equation}\label{est:ABounds}
|A_\alpha^a - \delta_\alpha^a| + |\wt{A}^\alpha_a - \delta^\alpha_a|\lesssim \tfrac{M\lceil\chi\rceil\ln r}{r} \qquad |\partial_\beta A_\alpha^a| + |\partial_\beta \wt{A}^\alpha_a| \lesssim \tfrac{M\lceil\chi\rceil\ln r}{r^2}
\end{equation}
This follows from straightforward calculation for $A_\alpha^a$, and the identity $\wt{A}_a^\alpha A_\alpha^b = \delta_a^b$.

We define the modified commutator fields
\begin{equation}\label{LorentzFields}
\wt{\mathbb{L}} = \{\wpa_a, \quad \Os{ij} = \wtx^{i}\wpa_j - \wtx^j\wpa_i = \Omega_{ij}, \quad  \Os{0j}= \wtt\wpa_j + \wtx^j\wpa_0, \quad \Ss = \wtt\wpa_0 + \wtx^i\wpa_i\}.
\end{equation}
The notation $\wX, \wY$ always refer to vectors in this set, and $\wX^I$ will mean an (ordered) set of these vectors multiindexed by $I$. We can define all other possible values of $\Os{ab}$ with the convention $\Os{ab} = -\Os{ba}$. Then, we define
\begin{equation}
\label{CX}
c_{\wX} = \begin{cases}
2 & \wX = \Ss, \\
0 & \wX = \wpa_a\text{ or } \wX = \Os{ab}.
\end{cases}
\end{equation}
For $M=0$, $\Lie{\wX}m = c_{\wX}m$. Given a collection of Lorentz fields $\wX$ and a multiindex $I$, we define $c_{\wX}^I$ to be the product of $c_{\wX}$ for each $\wX$ indexed by $I$. Next, recalling \eqref{def:nullframe}, we define the radial Lorentz boost field
\begin{equation}\label{RadBoost}
\Os{0r^*} = \sum_i\omega^i\Os{0i} = t^*\drs + r^*\dts = \tfrac12\left(\uls\Ls - \us\uLs\right).
\end{equation}
Then,
\begin{equation}
\Os{0j} = \omega_j\Os{0r} + t\wopa_j,
\end{equation}
which will simplify later commutator estimates. The corresponding commutators are
\begin{subequations}
\label{FieldComms}
\begin{alignat}{6}
[\wpa_a, \wpa_b] &= 0, \qquad &  [\wpa_a, \Os{ij}] &= \delta_{a[i}\wpa_{j]}, \qquad & [\wpa_a, \Ss] &= \wpa_a \\
[\Ss, \Os{ab}] &= 0, &[\wpa_a, \Os{0b}] &= \delta_{a(0}\wpa_{b)}, & [\Os{0i}, \Os{jk}]&= \delta_{ij}\Os{0k} - \delta_{ik}\Os{0j} \\
&&[\Os{0i}, \Os{0j}] &=\Os{ij},&[\Os{ij}, \Os{kl}] &= -\delta_{ik}\Os{jl} + \delta_{il}\Os{jk} + \delta_{jk}\Os{il} - \delta_{jl}\Os{ik}.
\end{alignat}
\end{subequations}
as well as
\begin{subequations}
\label{FrameComms}
\begin{alignat}{6}
[\Ls, \wpa_0] &= 0, \qquad & [\uLs, \wpa_0] &= 0, \qquad& [\Bs{j}, \wpa_0] &= 0, \\
[\Ls, \wpa_i] &= -\tfrac{1}{r^*}\wopa_{i}, \qquad & [\uLs, \wpa_{i}] &= \tfrac{1}{r^*}\wopa_i, \qquad & [\Bs{j}, \wpa_{i}] &= \tfrac{1}{r^*}a_{ij}^k(\omega) \Bs{k} + \Bs{j}(\omega_i)\drs,\\
[\Ls, \Ss] &= \Ls, & [\uLs, \Ss] &= \uLs, & [\Bs{j}, \Ss] &= \Bs{j}, \\
[\Ls, \Os{ij}] &= 0, & [\uLs, \Os{ij}] &= 0, & [\Bs{k}, \Os{ij}] &= b_{ijk}^l(\omega)\Bs{l}, \\
[\Ls, \Os{0i}] &= \omega_i \Ls + \tfrac{r^* - t}{r^*}\ops{i}, \quad & [\uLs, \Os{0i}] &= -\omega_i\uLs + \tfrac{r^* + t}{r^*}\ops{i},\quad & [\Bs{j}, \Os{0i}] &= \Bs{j}(\omega_i) \Os{0r} + \tfrac{t}{r^*}c_{ij}^l(\omega)\Bs{l}.
\end{alignat}
\end{subequations}

Here, $a, b,$ and $c$ are homogeneous functions of degree 0 in $r$ which satisfy the conditions
\begin{equation}
a_{i1}^1 = a_{i2}^2 = b_{ij1}^1 = b_{ij2}^2 = c_{i1}^1 = c_{i2}^2 = 0, \qquad
a_{i1}^2 + a_{i2}^1 = b_{ij1}^2 + b_{ij2}^1 = c_{i1}^2 + c_{i2}^1 = 0.
\end{equation}
We recall the Lie derivative formulas for one- and two-forms respectively:
\begin{align}
\label{LieOneForm}
(\Lie{X}\omega)_Y &= X(\omega_Y) - \omega([X,Y]), \\
\label{LieTwoForm}
(\Lie{X} \bG)_{YZ} &= X(\bG_{YZ}) - \bG([X,Y], Z) - \bG(Y, [X,Z]).
\end{align}

\begin{Lemma}\label{NullComms}
Given a two-form $\bG$, and $\tz = \tm/\tp$, and for a set of commutator fields $\wX^I$, it follows that
\begin{subequations}
\begin{align}
\wX^I(\al_i[\bG]) &\lesssim \sum_{\substack{|J| \leq |I|}}|\alpha[\Lie{\wY}^J\bG]| + |\tz\rho[\Lie{\wY}^J\bG]| + |\tz\sigma[\Lie{\wY}^J\bG]| + \tz[2]|\ual[\Lie{\wY}^J\bG]|, \\
\wX^I(\rho[\bG]) &\lesssim \sum_{\substack{|J| \leq |I|}}|\alpha[\Lie{\wY}^J\bG]| + |\rho[\Lie{\wY}^J\bG]| + |\sigma[\Lie{\wY}^J\bG]| + \tz|\ual[\Lie{\wY}^J\bG]|, \\
\wX^I(\sigma[\bG]) &\lesssim \sum_{\substack{|J| \leq |I|}}|\alpha[\Lie{\wY}^J\bG]| + |\rho[\Lie{\wY}^J\bG]| + |\sigma[\Lie{\wY}^J\bG]| + \tz|\ual[\Lie{\wY}^J\bG]|, \\
\wX^I(\ual_i[\bG]) &\lesssim \sum_{\substack{|J| \leq |I|}}|\alpha[\Lie{\wY}^J\bG]| + |\rho[\Lie{\wY}^J\bG]| + |\sigma[\Lie{\wY}^J\bG]| + |\ual[\Lie{\wY}^J\bG]|.
\end{align}
\end{subequations}
when $r^* \geq \tfrac{t+1}{4}$.
\end{Lemma}
\begin{proof}
This follows from induction using \eqref{FrameComms}. Specifically, we define the class $\Phi^M$ to be the set of functions which can be written as a finite sum
\[
\sum_{m + n\geq M}\Big(\tfrac{1}{r^*}\Big)^m\Big(\tfrac{\us}{r^*}\Big)^nP(\tfrac{r^*}{t+1}, \tfrac{t}{r^*}, \omega),
\]
for a compactly supported function $P$. Then, if $\phi \in \Phi^m$, $\wX\phi \in \Phi^m$. Additionally, for any $\wX$, we can write
\begin{subequations}
\begin{align}
[\Ls, \wX] &= \phi^0_1\Ls + \sum_j\phi^1_{1j}\Bs{j}, \\
[\Bs{i}. \wX] &= \phi^0_{2i}\Ls +\sum_j\phi^0_{2ij}\Bs{j} + \phi^1_{2i}\uLs, \\
[\uLs, \wX] &= \phi^0_{3}\Ls +\sum_j\phi^0_{3j}\Bs{j} + \phi^0_{4}\uLs,
\end{align}
\end{subequations}
where all $\phi^0$ functions are in $\Phi^0$, and all $\phi^1$ functions are in $\Phi^1$. Repeating this and noting that if $\phi \in \Phi^m$, then, for some $C$ depending on $\phi$, and for $r^* > \frac{t+1}{4}$,
\[
|\phi|\leq C\big(\tfrac{\langle\us\rangle}{r^*}\big)^m.
\]
\end{proof}
	\subsection{Assumptions on the Metric}\label{subsec:MetricAssumptions}
		The modified null frame gives better decay for certain components of the metric, which we will quantify in this section. We define
\begin{equation}
g_{\alpha\beta} = m_{\alpha\beta} + h^0_{\alpha\beta} + h^1_{\alpha\beta}, \qquad g^{\alpha\beta} = m^{\alpha\beta} + H_0^{\alpha\beta}  + H_1^{\alpha\beta},
\end{equation} where
\begin{equation}
h^0_{\alpha\beta} =  \tfrac{M\chi}{r}\delta_{\alpha\beta}, \qquad H_0^{\alpha\beta} =  - \tfrac{M\chi}{r}\delta^{\alpha\beta}.
\end{equation}
We define $\widehat{m}$ to be the Minkowski metric with respect to the new frame:
 \begin{equation}\label{def:hatm}
 \widehat{m} = -d\widetilde{t}^{\,2} + \sum_{i}d\widetilde{x_i}^2
 \end{equation}
The following estimate holds.
\begin{Proposition}
Define 
\begin{equation}
m^0_{\alpha\beta} = m_{\alpha\beta} + h^0_{\alpha\beta}, \qquad m_0^{\alpha\beta} = m^{\alpha\beta} + H_0^{\alpha\beta},
\end{equation}
as well as
\begin{equation}\label{def:sdelta}
\slashed{\delta}_{ij} = \delta_{ij} - \omega_{i}\omega_{j}, \qquad \slashed{\delta}^{ij} =  \delta^{ij} - \omega^{i}\omega^{j},
\end{equation}
Additionally, define $\slashed{\delta}_{\alpha\beta}, \slashed{\delta}^{\alpha\beta}$ using \eqref{def:sdelta}for $1\leq\alpha, \beta \leq 3$ and 0 if $\alpha = 0$ or $\beta = 0$. Then,
\begin{equation}\label{MetricApprox1}
|\Lie{\wX}^I (m^0 - (1-\tfrac{M\chi}{r})\widehat{m} + \tfrac{2M\chi\ln r - 2M\chi}{r}\slashed{\delta})|_{\mcU\mcU} \leq C(\tfrac{M\chi\langle\ln(1+r)\rangle^2}{r^2} + \tfrac{M\lceil \chi' \rceil \langle \ln (1+r) \rangle}{r})
\end{equation}
and
\begin{equation}\label{MetricApprox2}
|\Lie{\wX}^I (m_0 - (1+\tfrac{M\chi}{r})\widehat{m} - \tfrac{2M\chi\ln r - 2M\chi}{r}\slashed{\delta})|_{\mcU\mcU} \leq C(\tfrac{M\chi\langle\ln(1+r)\rangle^2}{r^2} + \tfrac{M\lceil \chi' \rceil \langle \ln (1+r) \rangle}{r})
\end{equation}
\end{Proposition}
\begin{proof}
We sketch out the proof, as a more thorough version appears in the sequel. This is trivially true when $\chi = 0$, so we may reduce this to standard derivatives in the $\wt{x}_a$ frame without worrying about behavior of the origin. Using \eqref{def:Asymbols}, every component of the difference can be written in the form
\[
\frac{MP_1(\tfrac{\chi}{r}, \tfrac{\chi\ln r}{r}, \tfrac{\chi'\ln r}{r}, \frac{t}{r}, \omega)}{1+MP_2(\tfrac{\chi}{r}, \tfrac{\chi\ln r}{r}, \tfrac{\chi'\ln r}{r}, \frac{t}{r}, \omega)},
\]
where each term in $P_2$ has a factor of $\tfrac{\chi}{r}$, $\tfrac{\chi\ln r}{r}$, or $\tfrac{\chi'\ln r}{r}$, and each term in $P_1$ contains a factor of $(\tfrac{\chi}{r})^2$, $(\tfrac{\chi\ln r}{r})^2$, $\tfrac{\chi}{r}\tfrac{\chi\ln r}{r}$, or $\tfrac{\chi'\ln r}{r}$. By induction, this holds also for Lie derivatives, if we let the polynomials depend on $\frac{1}{r}$ and $\frac{ \ln r}{r}$ terms containing higher derivatives of $\chi$. The bound \eqref{MetricApprox1} directly follows, noting that the denominator is uniformly bounded below for small $M$. The bound \eqref{MetricApprox2} follows from a similar argument after expressing each quantity in the modified coordinate system.
\end{proof}

Now we look at $h^1, H_1$. Since the main result is in a sense a part of a bootstrap argument for the full EMKG system, we assume $L^\infty$ estimates on low derivatives of $h^1$ and $H_1$ and $L^2$ estimates on high derivatives of $h^1$ and $H_1$. For now, we use the abstract $h$ for both (i.e. if an estimate is true for $h$, then it holds for $h^1$ and $H_1$, with indices raised and lowered as appropriate). We look at the set of metrics $g$ satisfying the $L^\infty$ norms

\begin{subequations}
\label{MetLI}
\begin{align}
 M &< \varepsilon_g, \\
\label{est:BadComp}
|\Lie{\wX}^Ih| &< \varepsilon_g\tp[-1+\delta], \\
|\Lie{\wX}^I h|_{\mcL\mcT} &< \varepsilon_g\tz[\gamma]\tp[-1+\delta],
\end{align}
\end{subequations}
for a multiindex $I, |I| \leq k - 6$, and
\begin{subequations}
\label{MetL2}
\begin{align}
\label{L21}
\left\lVert|\partial\Lie{\wX}^Ih|w_\gamma^{1/2}\right\rVert_{L^2(\mathbb{R}^3)} + \left\lVert\tm[-1]|\Lie{\wX}^Ih|w_\gamma^{1/2}\right\rVert_{L^2(\mathbb{R}^3)}&< \varepsilon_g (1+t)^{\delta/2}, \\
\label{L22}
\left\lVert\big(|\overline\partial\Lie{\wX}^Ih| + |\partial\Lie{\wX}^Ih|_{\Ls\Ls}\big) (w'_g)^{1/2}\right\rVert_{L^2([0,T]\times\mathbb{R}^3)}&< \varepsilon_g(1+T)^{\delta/2}, \\
\label{L23}\left\lVert\tp[s-1]\tm[-1-s]|\Lie{\wX}^Ih|_{\Ls\Ls}\right\rVert_{L^2([0,T]\times\mathbb{R}^3)} &< \varepsilon_g
\end{align}
\end{subequations}
for $|I| \leq k$ and
\begin{equation}
w_\gamma = \begin{cases}
1 + (1+t-r^*)^{-2\mu}& r^* \leq t, \\
1+(1 + r^* - t)^{1+2\gamma} & r^* \geq t.
\end{cases}
\end{equation}
In the energy estimate which will appear in the sequel, the second norm on the left of \eqref{L21} follows from a  weighted Hardy estimate. The norms in \eqref{L22} follow from a weighted energy estimate and the harmonic coordinate condition respectively.

In order for the estimate to close, we will assume $0<4\delta, 4\overline{\mu} <\min(1-2\mu, s-\frac12, 1-s, \gamma - \frac12, 1-\gamma, 1+\gamma-2s)$, $2\mu + \delta < 1$, and $k\geq 11$, and $\varepsilon_g>0$ is sufficiently small. Due to the freedom in choosing harmonic coordinates, we assume the initial conditions that at time $t = 0$,
\begin{equation}
g_{00} = -\left(1-\tfrac{M\chi}{r}\right), \qquad g_{0i} = 0
\end{equation}

We have analogous estimates on components of the raised metric. In particular, if we look at components of $g^{-1}$ in the null frame we see that every error term appearing in $g^{\uLs\uLs}$ and $g^{\uLs\Bs{i}}$ contains a term decaying like $\tz[\gamma']\tp[-1+\delta]$ or $\tp[-2+2\delta]$. It therefore follows from \eqref{MetricApprox2} and \eqref{MetLI} that
\begin{subequations}
\label{RMetric}
\begin{align}
|g^{\uLs\uLs}| + |g^{\uLs\Bs{}}| &\lesssim \varepsilon_g\tz[\gamma]\tp[-1+\delta], \\
\left|g^{\Ls\uLs} + \frac12\right| + \left|g^{\Bs{i}\Bs{i}} - 1\right| &\lesssim \varepsilon_g\tp[-1+\delta], \\
|g^{\Ls\Ls}| + |g^{\Bs{1}\Bs{2}}|+ |g^{\Ls\Bs{i}}| &\lesssim \varepsilon_g\tp[-1+\delta].
\end{align}
\end{subequations}

	\subsection{Lie Derivatives and Commutators}
		We recall the definition of the deformation tensor
\begin{equation}
\label{DT}
\DT{X} = \Lie{X}g = 2\cdot\text{symm}(\nabla X).
\end{equation}
If $X$ is Killing or conformal Killing, $\DT{X}$ is 0 or a scalar multiple of $g$ respectively. In general, we cannot assume any Killing or conformal Killing fields. However, if $g$ is close to Minkowski, we can still establish useful estimates on $\DT{X}$, as follows.

We first take a notational tool $\OLie{\wX}$, defined for the modified Lorentz fields, such that
\begin{subequations}
\label{OLieDef}
\begin{align}
\OLie{{\wX}}(T^{\alpha\beta}) &= (\Lie{{\wX}}T)^{\alpha\beta} + c_{\wX}T^{\alpha\beta}, \\
\OLie{{\wX}}(T_{\alpha\beta}) &= (\Lie{{\wX}}T)_{\alpha\beta} - c_{\wX}T_{\alpha\beta},
\end{align}
\end{subequations}
where the $c_{\wX}$ are the Killing coefficients defined in \eqref{CX}. We define the iterated reduced deformation tensors
\begin{subequations}
\begin{align}
\TDT{{\wX}^I}_{\alpha\beta} &:= (\OLie{{\wX}}^Ig)_{\alpha\beta}, \\
\TDTD{{\wX}^I}^{\alpha\beta}& := (\OLie{{\wX}}^Ig^{-1})^{\alpha\beta}.
\end{align}
\end{subequations}
These are similar up to a sign, which follow from expanding $\TDT{{\wX}^I} = \OLie{{\wX}}^I m^0 + \OLie{{\wX}}^Ih^1$, and bounding the two terms on the right using \eqref{MetricApprox1} and either \eqref{MetLI} or \eqref{MetL2}. Additionally, the reduced deformation tensors satisfy
\begin{equation}\label{eq:DTraised}
\TDT{{\wX}}^{\alpha\beta} = -\TDTD{{\wX}}^{\alpha\beta}, \qquad \TDT{{\wX}^I}^{\alpha\beta} = -\TDTD{{\wX}^I}^{\alpha\beta} + O(\sum_{|I'|\leq |I|}|\TDTD{{\wX}^{I'}}|_{\mcU\mcU}^2),
\end{equation}
which follow from applying $\OLie{\wX}^I$ to the identity
\[
g_{\alpha\beta} = g_{\alpha\gamma}g^{\gamma\delta}g_{\delta\beta}
\]
once and multiple times respectively. For any vector field $X$, taking the trace of \eqref{DT} gives
\begin{equation}\label{id:DivX}
\nabla\cdot X = \tfrac12 g^{\alpha\beta}(\Lie{X}g)_{\alpha\beta},
\end{equation}
and consequently, subtracting off $c_X\text{tr}(g)$ and expanding the Lie derivatives using \eqref{OLieDef} gives
\begin{equation}\label{id:DivX2}
\left|\wY\left(\Lie{\wX}^I(\nabla \cdot \wX_1)\right)\right|\lesssim \sum_{\substack{|I_1|+|I_2| \leq |I|+1}}\left|\wY\left(g^{\alpha\beta}\TDT{\wX^{I_2}}_{\alpha\beta}\right)\right| + \left|\wY\left(\TDTD{\wX^{I_1}}^{\alpha\beta}\TDT{\wX^{I_2}}_{\alpha\beta}\right)\right|.
\end{equation}
These quantities also satisfy analogous estimates to \eqref{MetLI}:
\begin{Proposition}\label{prop:ModifiedDTBounds}
For sufficiently small $\varepsilon_g$, the following estimates follow from \eqref{MetLI}
\begin{subequations}
\label{DTLI}
\begin{align}
|\TDT{{\wX}^I}| +|\TDTD{{\wX}^I}|&\lesssim \varepsilon_g\tp[-1+\delta], \\
|\TDT{\wX^I}_{\mcL\mcT}|+|\TDTD{\wX^I}_{\mcL\mcT}|  &\lesssim \varepsilon_g\tm[\gamma]\tp[-\gamma-1+\delta],
\end{align}
\end{subequations}
for $|I| \leq k - 6$. Additionally,
\begin{subequations}
\label{DTLIm}
\begin{align}
|\TDT{{\wX}^I} - \OLie{\wX}^I h^1| + |\TDTD{{\wX}^I} - \OLie{\wX}^I H_1|&\lesssim \varepsilon_g\tp[-1+\delta], \\
|\TDT{{\wX}^I} - \OLie{\wX}^I h^1|_{\mcL\mcT}  + |\TDTD{{\wX}^I} - \OLie{\wX}^I H_1|_{\mcL\mcT} &\lesssim \varepsilon_g\tm[\gamma]\tp[-\gamma-1+\delta],
\end{align}
\end{subequations}
for all $I$, where the constant in $\lesssim$ depends on $I$.
\end{Proposition}
\begin{proof}
These follow directly from noting 
\[
\OLie{\wX}\mhat = 0, \qquad \OLie{\wX} g = \OLie{\wX}(g-\mhat)
\]
 in both the upper and lower indices, and iterating the estimates \eqref{MetricApprox1} and \eqref{MetricApprox2}.
\end{proof}

We now commute standard and covariant derivatives through Lie derivatives. First, for standard derivatives $\partial_\gamma$, and for tensors $T^{\alpha_1\alpha_2...\alpha_m}_{\beta_1\beta_2...\beta_n}$, we have that
\begin{align}
[\partial_\gamma, \Lie{X}]T &= -(\partial_\gamma\partial_\delta X^{\alpha_1})T^{\delta\alpha_2...\alpha_m}_{\beta_1\beta_2...\beta_n} - ... -(\partial_\gamma\partial_\delta X^{\alpha_m})T^{\alpha_1\alpha_2...\delta}_{\beta_1\beta_2...\beta_n} + \\
&\quad+ (\partial_\gamma\partial_{\beta_1} X^{\delta})T^{\alpha_1\alpha_2...\alpha_m}_{\delta\beta_2...\beta_n} + ... + (\partial_\gamma\partial_{\beta_n} X^{\delta})T^{\alpha_1\alpha_2...\alpha_m}_{\beta_1\beta_2...\delta}\nonumber.
\end{align}
We have an analogous result for the covariant derivative:
\begin{align}
[\nabla_\gamma, \Lie{X}]T &= -(\nabla_\gamma\nabla_\delta X^{\alpha_1})T^{\delta\alpha_2...\alpha_m}_{\beta_1\beta_2...\beta_n} - ... -(\nabla_\gamma\nabla_\delta X^{\alpha_m})T^{\alpha_1\alpha_2...\delta}_{\beta_1\beta_2...\beta_n} + \\
&\quad+ (\nabla_\gamma\nabla_{\beta_1} X^{\delta})T^{\alpha_1\alpha_2...\alpha_m}_{\delta\beta_2...\beta_n} + ... + (\nabla_\gamma\nabla_{\beta_n} X^{\delta})T^{\alpha_1\alpha_2...\alpha_m}_{\beta_1\beta_2...\delta}\nonumber.
\end{align}
In each case, if $T$ is a scalar, the corresponding commutator is 0. In the Minkowski metric, these are again 0 whenever $X$ is a Lorentz field, as $X^\alpha$ is constant or linear in the standard frame. 

For all vector fields $X$ and all antisymmetric $(2,0)$-tensors $\bG$, we have the identity
\begin{equation}
\label{FComms}
[\nabla_\beta, \Lie{X}]\bG^{\alpha\beta} = -(\nabla_\delta\nabla_\beta X^\beta)\bG^{\alpha\delta}.
\end{equation}
This is straightforward to prove:
\begin{align*}
[\nabla_\beta, \Lie{X}]\bG^{\alpha\beta} &= \nabla_\beta\left(X^\delta \nabla_\delta \bG^{\alpha\beta} - (\nabla_\delta X^\alpha)\bG^{\delta\beta} - (\nabla_\delta X^\beta)\bG^{\alpha\delta}\right) - X^\delta\nabla_\delta\nabla_\beta \bG^{\alpha\beta} + (\nabla_\delta X^\alpha)\nabla_\beta \bG^{\delta\beta}- \\
&= X^\delta[\nabla_\beta, \nabla_\delta]\bG^{\alpha\beta} - (\nabla_\beta\nabla_\delta X^\alpha)\bG^{\delta\beta} - (\nabla_\beta\nabla_\delta X^\beta)\bG^{\alpha\delta}.
\end{align*}

Expanding the first term using the Riemann curvature tensor, symmetrizing the derivatives in the middle term and commuting the derivatives in the last term, then taking advantage of the antisymmetry of $F$ and the Bianchi identity
\[
R^\alpha_{\beta\gamma\delta} + R^\alpha_{\gamma\delta\beta} + R^{\alpha}_{\delta\beta\gamma} = 0
\]
gives us the desired identity.

Likewise, we can define the complex Lie derivative 
\begin{equation}
\LieC{X} = \Lie{X} + iA_X.
\end{equation}
We can write the commutators
\begin{subequations}
\begin{align}
\label{Comm1}[D_\beta, \LieC{\wX}]\psi &= i\bF_{\beta \wX}\psi,\\
\label{Comm2}[D^\alpha, \LieC{\wX}]\psi &= ig^{\alpha\beta}\bF_{\beta \wX}\psi + \DT{\wX}^{\alpha\beta}D_\beta\psi,\\
\label{Comm3A}[D_\alpha, D_\beta]\eta^\alpha &= -R^\alpha_{\gamma\alpha\beta}\eta^\gamma + i\bF_{\alpha\beta}\eta^\alpha\\
\label{Comm3B}\wX^\beta[D_\alpha, D_\beta]\eta^\alpha - \eta^\beta [\nabla_\alpha, \nabla_\beta]\wX^\alpha &= i\bF_{\alpha\beta}\eta^\alpha \wX^\beta \\
\label{Comm3}[D_\alpha, \LieC{\wX}]\eta^\alpha &= i\bF_{\alpha\beta}\eta^\alpha \wX^\beta - (\nabla_\beta\nabla_\alpha \wX^\alpha)\eta^\beta, \\
\label{Comm3D}[\nabla_\alpha, \Lie{\wX}]\eta^\alpha &= \nabla_\beta(\nabla\cdot \wX)\eta^\alpha
\end{align}
\end{subequations}
The identity \eqref{Comm1} follows from expanding and using the identity $[D_\alpha, D_\beta]\phi = i\bF_{\alpha\beta}\phi$ and is a direct analogue of the Cartan formula, \eqref{Comm2} follows from writing $D^\alpha = g^{\alpha\beta} D_\beta$, then applying \eqref{Comm1} and \eqref{eq:DTraised}, \eqref{Comm3A} comes from rewriting $D_\alpha = \nabla_\alpha +i\bA_\alpha$ and expanding the commutator, \eqref{Comm3B} follows from the interchange symmetry of the Riemann curvature tensor, and \eqref{Comm3} follows from \eqref{Comm3B} and straightforward calculation. Combining \eqref{Comm2} and \eqref{Comm3} gives
\begin{equation}
\label{CommBox}
[\Box_g^{\mathbb{C}}, D_{\wX}]\psi = [D_\alpha D^\alpha, \LieC{{\wX}}]\psi= iD^\beta(\bF_{\beta {\wX}}\psi) + D_\alpha(\DT{{\wX}}^{\alpha\beta}D_\beta\psi) + i\bF_{\alpha {\wX}}D^\alpha\psi - \nabla_\beta(\nabla\cdot {\wX})D^\beta\psi.
\end{equation}

	\subsection{The Charge Contribution}
		\label{ChargeIntro}
It follows from elliptic theory that even $\bphi$ is compactly supported, we cannot assume that $\bF$ will decay faster than $r^{-2}$ (to see this, decompose $\mathbf{E}[\bF]$ into its divergence free and curl free components, and consider the potential function of the curl-free part). Fortunately, it was shown in \cite{LS} that $\bF$ can be decomposed into $\bF^0+\bF^1$, where $\bF^0$ is explicitly defined and $\bF^1$ decays rapidly. We adapt the consideration there to our class of spacetimes.

We define the charge
\begin{equation}\label{def:q}
q[\bF](t) = \intsig{t}\!\! -\sqrt{|g|}\bJ^0 dx
\end{equation}
Since $\bJ$ is divergence-free we may drop the dependence on time assuming sufficient decay of $\bphi\overline{D\bphi}$.
We define the charge 1-form
\begin{equation}
\mathbf{A}^{\!q} = \left(\int_0^r \frac{q}{4\pi}\frac{\overline{\chi}(s^*-t-2)}{s^{*2}}\, ds^*\right)\,dt,
\end{equation}
where  $s^* = s + M\chi\ln(s)$. Additionally, $\overline\chi$ is a smooth increasing function satisfying
\begin{equation}
\label{ochi}
\overline\chi(y) = \begin{cases}
1 & y > 1, \\
0 & y < 0,
\end{cases}
\end{equation}
We can now define $\bF^0 = d\mathbf{A}^{\!q}$, or
\begin{equation}
\bF^0_{0i} = \omega^{i}\left(\frac{q}{4\pi}\frac{\overline\chi(r^*-t-2)\partial_r(r^*)}{r^{*2}}\right).
\end{equation}
We take the null decomposition:
\begin{subequations}
\begin{align}
\rho[\bF^0] &= \left(\frac{q}{4\pi}\frac{\overline\chi(r^*-t-2)}{r^{*2}}\right), \\
\alpha[\bF^0] &= \underline\alpha[\bF^0] = \sigma[\bF^0] = 0.
\end{align}
\end{subequations}

We can use this to establish component estimates on all Lie derivatives of $\bF^0$. Fortunately our choice of $\bF^0$ makes this process relatively straightforward. We have in particular the estimates
\begin{subequations}
\label{ChargeLInfty}
\begin{align}
|\alpha[\Lie{\wX}^I\bF^0]| &\lesssim |q|\lceil\overline{\chi}\rceil\tm\tp[-3], \\
|\rho[\Lie{\wX}^I\bF^0]| &\lesssim |q|\lceil\overline{\chi}\rceil\tp[-2], \\
|\sigma[\Lie{\wX}^I\bF^0]| &\lesssim |q|\lceil\overline{\chi}\rceil\tp[-2], \\
|\ual[\Lie{\wX}^I\bF^0]| &\lesssim |q|\lceil\overline{\chi}\rceil\tp[-2].
\end{align}
\end{subequations}

These follow from the commutator terms \eqref{FrameComms}, using an analogous argument to Lemma \eqref{NullComms}. We put off discussion of the associated current vector until later.
	\subsection{A Model Morawetz Inequality}
			\label{ModelMorawetz}
Here we prove some model conformal Morawetz-type inequalities, which will motivate features of our energy estimate. We consider the equation
\[
\partial_\alpha(g^{\alpha\gamma}\partial_\gamma\phi) = 0
\]
in Minkowski space, where $g$ is close to the Minkowski metric.
 We take the null frame $\{L = \partial_t+\partial_r, \underline{L} = \partial_t - \partial_r, S_1, S_2\}$,
where $S_j$ are piecewise defined orthonormal fields tangent to spheres of fixed radius, and the conformal field 
\begin{equation}
K_0 = \tfrac12(\tp[2])(\partial_t+\partial_r) + \tfrac12(\tm[2])(\partial_t-\partial_r) = (1+t^2+r^2)\partial_t + 2tr\partial_r.
\end{equation}
 Additionally, we have the optical weights in Minkowski space
\[
\tp[2] = 1+(t+r)^2 \qquad \tm[2] = 1+(t-r)^2, \qquad \tz[2] = \tm[2]/\tp[2].
\]
\subsubsection{The model inequality}\label{MM1}
Our first estimate will show the use of peeling estimates on the metric, as well as give insight as to why additional decay coming from harmonic coordinates is necessary.
\begin{Theorem}\label{MMFirst}
Let $H^{\alpha\beta} = g^{\alpha\beta} - m^{\alpha\beta}$ satisfy
\begin{subequations}\label{MinkPeeling}
\begin{align}
\tp |L(H)|_{LL}+\tm |\underline{L}(H)|_{LL}+|H_{LL}| &\leq \varepsilon_H\tz[2], \\
\tp |L(H)|_{TU}+\tm |\underline{L}(H)|_{TU}+|H_{TU}| &\leq \varepsilon_H\tz, \qquad U\in\{L, S_1, S_2, \}, U\in\{L, S_1, S_2, \underline{L}\} \\
\tp |L(H)|_{\underline{L}\underline{L}}+\tm |\underline{L}(H)|_{\underline{L}\underline{L}}+|H_{\underline{L}\underline{L}}| &\leq \varepsilon_H,
\end{align}
\end{subequations}
where indices are raised and lowered according to the Minkowski metric and 
\[
|X(H)|_{YZ} = |X(H_{\alpha\beta})Y^\alpha Z^\beta|.
\]
There exists a constant $\varepsilon_H > 0$ such that for a smooth function $\phi$ with compact support, and energy
\begin{equation}
\mathcal{E}(t) = \intsig{t}\Big(\tfrac{\tp[2]}{4}\left(\tfrac{L(r\phi)}{r}\right)^2 + \tfrac{\tm[2]}{4}\left(\tfrac{\underline{L}(r\phi)}{r}\right)^2 + \tfrac{\tp[2]+\tm[2]}{4}\sum_j|S_j\phi|^2\Big) \, dx,
\end{equation}
we have the estimate
\begin{equation}
\label{TestMor}
\mathcal{E}(T) \lesssim \mathcal{E}(0) + \varepsilon_H\int_0^T\tfrac{\mathcal{E}(t)}{1+t} + \int_0^T\intsig{t}\left|\tfrac{K_0(r\phi)}{r}\partial_\alpha(g^{\alpha\gamma}\partial_\gamma\phi)\right|.
\end{equation}

\end{Theorem}
This is a standard conformal energy estimate as seen in, e.g.,  \cite{H}, but we take the time to highlight two features. First, in \eqref{MinkPeeling} we require sharper decay on $H_{LL}$ than  (and which plays a major part in the asymptotic system). Second, Gronwall's Lemma gives (slowly) growing energy even if all components of $H$ decay like $\tz[-2]$.

\begin{proof}
 Our main tool is the divergence theorem applied to the quantity
\begin{equation}
P^\alpha = -\left(\tfrac{K_0(r\phi)}{r}g^{\alpha\gamma}\partial_\gamma\phi - \tfrac12 K_0^\alpha g^{\gamma\delta}\partial_\gamma\phi\partial_\delta\phi + \tfrac12(L^\alpha+\underline{L}^\alpha)\phi^2\right).
\end{equation}
The field $K_0$ is conformal Killing (but not Killing) with respect to the Minkowski metric. Integrating along time slices gives
\begin{align}
\label{P0Sample}
E(t) = \intsig{t}P^0 = \intsig{t}&\left(\tfrac{K_0(r\phi)}{r}\partial_t\phi + \tfrac{\tp[2]+\tm[2]}{4}(-L\phi\underline{L}\phi + |S_j\phi|^2) - \phi^2\right) \\
& + \left(-\tfrac{K_0(r\phi)}{r}(H^{L\gamma}\partial_\gamma\phi + H^{\underline{L}\gamma}\partial_\gamma\phi)+ \tfrac14(\tp[2]+\tm[2])H^{\gamma\delta}\partial_\gamma\phi\partial_\delta\phi\right), \nonumber
\end{align}
Therefore,
\begin{equation}
E(T)-E(0) = \int_0^T\intsig{t}\partial_\alpha P^\alpha \, dx \, dt.
\end{equation}
We must show that $E$ and $\mathcal{E}$ are equivalent, and that the integral on the right hand side is bounded by the right hand side of \eqref{TestMor} up to a constant. Writing
\[
\partial_t\phi  = \tfrac12\left(\tfrac{L(r\phi)}{r} + \tfrac{\underline{L}(r\phi)}{r}\right), \qquad -L\phi\underline{L}\phi = -\left(\tfrac{L(r\phi)}{r}\right)\left(\tfrac{\underline{L}(r\phi)}{r}\right) - \tfrac{2\partial_r(r\phi)}{r}\tfrac{\phi}{r} + \left(\tfrac{\phi}{r}\right)^2
\]
so the first line of \eqref{P0Sample} is equivalent to
\begin{equation}
\intsig{t}\Big(\tfrac{\tp[2]}{4}\left(\tfrac{L(r\phi)}{r}\right) + \tfrac{\tm[2]}{4}\left(\tfrac{\underline{L}(r\phi)}{r}\right) + \tfrac{\tp[2]+\tm[2]}{4}\Big(\sum_j|S_j\phi|^2 - \tfrac{2\partial_r(r\phi)}{r}\tfrac{\phi}{r} + \left(\tfrac{\phi}{r}\right)^2\Big) - \phi^2\Big).
\end{equation}
Adding the spatial divergence
\begin{equation}\label{id:SpatDiv}
\partial_i\left[\left(\tfrac{\tp[2]+\tm[2]}{4}\tfrac{\omega_i\phi^2}{r}\right)\right] = \tfrac{\tp[2]+\tm[2]}{4}\tfrac{\phi^2}{r^2} + \tfrac{\tp[2]+\tm[2]}{4}\tfrac{2\phi\partial_r\phi}{r} + \phi^2.
\end{equation}
and expanding $r\partial_r\phi = \partial_r(r\phi)-\phi$ gives $\mathcal{{E}}(t)$.

Now we consider the terms in \eqref{P0Sample} containing $H$, for which we will use the estimates \eqref{MinkPeeling}
It suffices to show that the error terms can be bounded uniformly by $\frac12\mathcal{E}(t)$. We show this for the terms containing $H^{\gamma\delta}\partial_\gamma\phi\partial_\delta\phi$. Other terms follow similarly. Writing $|\overline\partial\phi|^2 = |L\phi|^2 + |S_1\phi|^2 + |S_1\phi|^2$, then
\begin{equation}\label{est:HMink}
|H^{\gamma\delta}\partial_\gamma\phi\partial_\delta\phi| \lesssim  \varepsilon_H\tz[2]|\underline{L}\phi|^2 + \varepsilon_H\tz|\underline{L}\phi||\overline\partial\phi| + \varepsilon_H|\overline\partial\phi|^2\lesssim  \varepsilon_H\tz[2]|\underline{L}\phi|^2 +  \varepsilon_H|\overline\partial\phi|^2,
\end{equation}
so Lemma \ref{PHBasic} implies
\begin{equation}
 \intsig{t}\tp[2]|L(\phi)|^2 + \tm[2]|\underline{L}(\phi)|^2 + \tp[2]\Big|\tfrac{\phi}{r}\Big|^2\, dx,\lesssim \mathcal{E}(t),
\end{equation}
so
\begin{equation}
\intsig{t} \frac14(\tm[2]+\tp[2])|H^{\gamma\delta}\partial_\gamma\phi\partial_\delta\phi| \lesssim\intsig{t} \varepsilon_H\tz[2]\tp[2]|\underline{L}\phi|^2 + \varepsilon_H\tp[2]|\overline\partial\phi|^2 \lesssim  \varepsilon_H\mathcal{E}(t).
\end{equation}
he quantity on the right is bounded by $\frac12\mathcal{E}(t)$ for sufficiently small $\varepsilon_H$. We consider the divergence
\begin{align}
\label{SampleMor1}
\partial_\alpha P^\alpha &= -\tfrac{K_0(r\phi)}{r}\partial_\alpha(g^{\alpha\beta}\partial_\beta\phi) - g^{\alpha\beta}\partial_\alpha\Big(\tfrac{K_0(r\phi)}{r}\Big)\partial_\beta\phi + \tfrac12 K_0(g^{\alpha\beta})\partial_\alpha\phi\partial_\beta\phi + \tfrac12(\partial_\alpha K_0^\alpha)(g^{\beta\gamma}\partial_\beta\phi\partial_\gamma\phi) + \\
&+ g^{\alpha\beta}K_0(\partial_\alpha\phi)\partial_\beta\phi - 2\phi\partial_t\phi. \nonumber
\end{align}
To bound this, we start with the identities
\[
\tfrac{K_0(r\phi)}{r} = K_0(\phi) + 2t\phi, \qquad \partial_\alpha K_0^\alpha = 8t, \qquad [\partial_t, K_0] = 2S,\qquad [\partial_i, K_0] = \Omega_{0i}.
\]
Then,
\[
\partial_\alpha\Big(\tfrac{K_0(r\phi)}{r}\Big) = \partial_\alpha K_0\phi + \partial_\alpha(2t\phi) = [\partial_\alpha, K_0]\phi + K_0(\partial_\alpha\phi)  + 2\delta_\alpha^0\phi + 2t\partial_\alpha\phi,
\]
so
\[
- g^{\alpha\beta}\partial_\alpha\Big(\tfrac{K_0(r\phi)}{r}\Big)\partial_\beta\phi = -g^{\alpha\beta}[\partial_\alpha, K_0]\phi\partial_\beta\phi - g^{\alpha\beta}K_0(\partial_\alpha\phi)\partial_\beta\phi - 2g^{0\beta}\phi\partial_\beta\phi - 2tg^{\alpha\beta}\partial_\alpha\phi\partial_\beta\phi.
\]
Equation \eqref{SampleMor1} can be rewritten as
\begin{align*}
\partial_\alpha P^\alpha &= -\tfrac{K_0(r\phi)}{r}\partial_\alpha(g^{\alpha\beta}\partial_\beta\phi) -g^{\alpha\beta}[\partial_\alpha, K_0]\phi\partial_\beta\phi + \tfrac12 K_0(g^{\alpha\beta})\partial_\alpha\phi\partial_\beta\phi + 2t(g^{\beta\gamma}\partial_\beta\phi\partial_\gamma\phi) - 2H^{0\beta}\phi\partial_\beta\phi . \nonumber
\end{align*}
Noting
\begin{equation}
m^{\alpha\beta}[\partial_\alpha, K_0]\phi\partial_\beta\phi = 2tm^{\alpha\beta}\partial_\alpha\phi\partial_\beta\phi
\end{equation}
as well as the null decomposition
\begin{equation}
L^{\alpha}[\partial_\alpha, K_0]= 2(t+r)L\phi , \qquad \underline{L}^{\alpha}[\partial_\alpha, K_0] = 2(t-r)\underline{L}\phi, \qquad S_j^\alpha[\partial_\alpha, K_0] = 0,
\end{equation}
then, from the bounds \label{MinkPeeling}, \eqref{est:HMink}, a null decomposition, and the estimate $t < \tp$ we can say
\begin{subequations}\label{est:HBoundsMink}
\begin{align}
| 2t(H^{\beta\gamma}\partial_\beta\phi\partial_\gamma\phi)| & \lesssim \varepsilon_H\tm[2]\tp[-1]|\underline{L}\phi|^2 + \varepsilon_H\tp|\overline\partial\phi|^2 \\
|H^{0\beta}\phi\partial_\beta\phi| &\lesssim \varepsilon_H\tz|\phi||\underline{L}\phi| + \varepsilon_H|\phi||\overline\partial\phi| \\
& \lesssim \varepsilon_H(\tp[-1]|\phi|^2 + \tp|\overline\partial\phi|^2 + \tm[2]\tp[-1]|\underline{L}\phi|^2)\nonumber \\
|H^{\alpha\beta}[\partial_\alpha, K_0]\phi\partial_\beta\phi| &\lesssim \varepsilon_H\tp(|\overline\partial\phi|^2 + \tz|\overline\partial\phi||\underline{L}\phi| +\tz[2]|\underline{L}\phi|^2) \\
|K_0(H^{\alpha\beta})\partial_\alpha\phi\partial_\beta\phi| &\lesssim \varepsilon_H\tp(|\overline\partial\phi|^2 + \tz|\overline\partial\phi||\underline{L}\phi| +\tz[2]|\underline{L}\phi|^2).
\end{align}
\end{subequations}
Noting cancellations in \eqref{SampleMor1} and bounding the integral of each term in \eqref{est:HBoundsMink} in space by $(1+t)^{-1}\mathcal{E}(t)$ completes the proof.
\end{proof}

This estimate in itself is not particularly useful, in that applying Gronwall's lemma gives slowly growing energy, which is undesirable, and because in harmonic coordinates we cannot expect $H_{LL}$ to decay faster than $\tz[-1]$.

We approach this problem from both sides. First, we use the fractional Morawetz estimate used by Lindblad and Sterbenz in \cite{LS}. Given this estimate, we would only need $g_{LL}$ to decay like $\tz[2s]\tp[-\epsilon]$ for some $s>\frac12, \epsilon > 0$, with analogous bounds for other components and derivatives of $H$. Next, we use a modified null frame which gives us this decay, see \eqref{MetricApprox1} and \eqref{MetLI}.
\subsubsection{The modified inequalities}\label{MM23}
We now wish to verify that the modified metric gives better decay. We define
\begin{equation}
\wt{K} = (1+\uls^2)\Ls + (1+\us^2)\uLs
\end{equation}
\begin{Lemma}\label{MMThing}
Given the inverse metric $m_0 = m + H_0$, and the energy 
\[
E_0[\phi](T) = \intsig{t}\left(\tp[2]\left(\left|\tfrac{D_\Ls(r^*\phi)}{r^*}\right|^2 + |\slashed{D}\phi|^2\right) + \tm[2]\left|\tfrac{D_\uLs(r^*\phi)}{r^*}\right|^2\right) \, dx,
\] 
it follows that
\begin{equation}
\label{mtEst}
E_0[\phi](T) - E_0[\phi](0) \lesssim M\int_0^T\tfrac{E_0[\phi](t)\langle\ln(1+t)\rangle^2}{1+t} + \int_0^T \intsig{t} \left|\tfrac{K_0^*(r^*\phi)}{r^*}\wpa_\alpha (\widetilde{m}_0^{\alpha\beta}\wpa_\beta\phi) \right|,
\end{equation}
where $\widetilde{m}_0$ is $m+H_0$ expressed in the $\wt{x}^\alpha$ coordinates. Alternatively, for any $c, \overline{\mu}> 0$,
\begin{equation}
\label{mtEst2}
E_0[\phi](T) - E_0[\phi](0) \lesssim M\int_0^T\tfrac{E_0[\phi](t)\langle\ln(1+t)\rangle^2}{1+t} + c\int_0^T\intsig{t}\tm[-1-2\overline{\mu}]\tp[2]\Big|\tfrac{\Ls(r^*\phi)}{r^*}\Big|^2 + c^{-1}\int_0^T \intsig{t}\tp[2]\tm[1+2\overline{\mu}] \left|\wpa_\alpha (\widetilde{m}_0^{\alpha\beta}\wpa_\beta\phi) \right|^2.
\end{equation}
\end{Lemma}
\begin{proof}
The estimate \eqref{mtEst} follows from repeating the proof of \eqref{MMFirst}, in the $\widetilde{x}^\alpha$ coordinates on the background metric $\mhat$ after multiplying the bounds in \eqref{MinkPeeling} by $\langle\ln(1+t)\rangle^2$ to account for the slightly worse bounds of \eqref{MetricApprox2}. \eqref{mtEst2} follows from the Cauchy-Schwartz inequality.
\end{proof}
In \eqref{mtEst} we have slowly growing energy even when $\partial_\alpha (m_0^{\alpha\beta}\partial_\beta\phi)$ vanishes, which we will mitigate using the fractional field $\okos$. Additionally, for the Maxwell-Klein-Gordon system, bounding the second term on the right of \eqref{mtEst2} is nontrivial. By introducing a weight $w$ which grows in $r^* - t$, we can introduce a spacetime term to our energy. We state the following result without proof, which follows directly from multiplying $P^\alpha$ by $w$ and noting that $\nabla w$ is approximately null.
\begin{Lemma}
Given the inverse metric $m_0$, and the energy 
\[
E_w[\phi](T) = \intsig{T}\left(\tp[2]\left(\left|\tfrac{\Ls(r^*\phi)}{r^*}\right|^2 + |\wopa\phi|^2\right) + \tm[2]\left|\tfrac{\uLs(r^*\phi)}{r^*}\right|^2\right)w \, dx,
\]
along with the interior spacetime energy
\[
S_w[\phi](T) = \int_0^T\intsig{t}\Big(\tp[2]\Big|\tfrac{\Ls(r^*\phi)}{r^*}{\Big|}^2 + \tm[2]|\wopa\phi|^2\Big)w'\, dx \, dt,
\]
where 
\[w = w(r^* - t) =
\begin{cases}
1+(1+(t-r^*))^{-2\overline{\mu}} & r^* < t, \\
1+(1+(r^*-t))^{1+2\gamma} & r^* > t,
\end{cases}
\qquad \qquad
w' \approx 
\begin{cases}
(1+(t-r^*))^{-1-2\overline{\mu}} & r^* < t, \\
1+(1+(r^*-t))^{2\gamma} & r^* > t,
\end{cases}
\]
for some constants $\overline{\mu}, \gamma > 0$, we have the estimate
\begin{equation}
E_w[\phi](T) + S_w[\phi](T)  \lesssim E_w[\phi](0) + M\int_0^T\frac{E_w[\phi](t)\langle\ln(1+t)\rangle^2}{1+t} + \int_0^T \intsig{t} \left|\frac{K_0^*(r^*\phi)}{r^*}\wpa_\alpha (\widetilde{m}_0^{\alpha\beta}\wpa_\beta\phi) w\right|.
\end{equation}
\end{Lemma}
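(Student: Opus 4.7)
The plan is to adapt the vector-field computation of the preceding lemma by attaching the weight $w$ to the current. Define
\[
P_\alpha = -\frac{w}{r^{*2}}Q[r^*\phi]_{\alpha\beta}K_0^{*\beta},
\]
for which $\int_{\Sigma_t}\mt^{0\gamma}P_\gamma$ is equivalent to $E_w[\phi](t)$ (the equivalence established in the preceding lemma survives insertion of the pointwise multiplier $w$, as $w$ only rescales the integrand). Decomposing the divergence,
\[
\nabla^\alpha P_\alpha \;=\; w\,\nabla^\alpha\!\Bigl(-\tfrac{1}{r^{*2}}Q[r^*\phi]_{\alpha\beta}K_0^{*\beta}\Bigr) \;-\; \tfrac{1}{r^{*2}}Q[r^*\phi]_{\alpha\beta}K_0^{*\beta}\,\partial^\alpha w,
\]
splits the spacetime integral into two pieces: the first reproduces the bound of the preceding lemma with every term now carrying the factor $w$, while the second is the new bulk contribution which should produce $+S_w$ on the left.

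For the $\partial w$ contribution, I expand $\partial^\alpha w$ in the null frame. Since $w$ depends on a single null variable in each region (on $\us$ in the interior and on the analogous exterior null variable), raising indices using $g^{\Ls\uLs}\approx -\tfrac12$ together with smallness of the other components of $g^{-1}$ shows that $\partial^\alpha w$ is, modulo metric perturbations, proportional to $\Ls^\alpha$ with coefficient of size $\iota\,w'$ in the interior and $\delta\,w'$ in the exterior. Contracting against $K_0^* = \tp[2]\Ls + \tm[2]\uLs$ and using
\[
Q[r^*\phi](\Ls,\Ls) = |\Ls(r^*\phi)|^2, \qquad Q[r^*\phi](\Ls,\uLs) = |\slashed\partial(r^*\phi)|^2,
\]
the minus sign in $P$ turns the contribution into a positive bulk of size
\[
\gtrsim_{\iota,\delta}\;\frac{w'}{r^{*2}}\Bigl(\tp[2]|\Ls(r^*\phi)|^2 + \tm[2]|\slashed\partial(r^*\phi)|^2\Bigr),
\]
which, after the identity $\Ls(r^*\phi)/r^* = \Ls\phi + (\Ls(r^*)/r^*)\phi$ with the lower-order factor absorbed by a Hardy bound against $E_w$, matches $S_w[\phi](T)$ up to harmless errors.

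For the $w\,\nabla^\alpha(\,\cdot\,)$ piece I reuse \eqref{mtEst2} with $w$ carried through the integrand. The energy-type error becomes $M\int E_w(t)/(1+t)\,dt$, which appears on the right-hand side, and the source term is $\int w\,|K_0^*(r^*\phi)\Box_{\mt}\phi/r^*|$, also on the right. The dangerous remaining term is the bulk integral $M\int w\,\tm[-2]\tp[2]|\Ls\phi|^2$ inherited from \eqref{mtEst2}: in the interior, $w\lesssim 1$ and $w'\approx\tm[-1-\iota]$, so $Mw\tm[-2]\lesssim Mw'$ using $\tm\geq 1$; in the exterior, $w\approx\tm[\delta]$ and $w'\approx\tm[\delta-1]$, so again $Mw\tm[-2]\lesssim Mw'$. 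For $\epsilon$ (and hence $M$) small, this piece is absorbed into $S_w$ on the left.

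The main obstacle I expect is precisely this absorption of the bad spacetime term into $S_w$, which is the whole reason for introducing the weight; the margins $\iota$ and $\delta$ are exactly what makes the comparison $Mw\tm[-2]\lesssim Mw'$ work. A secondary issue is the mild non-smoothness of $w$ across the cone $r^*=t$: because $w$ is continuous there, its weak derivative produces no delta contribution, and the computation extends to the stated $w$ by smoothing and passing to the limit.
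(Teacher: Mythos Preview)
Your proposal is correct and follows essentially the same route as the paper: both attach the weight $w$ to the momentum density $-r^{*-2}Q[r^*\phi]_{\alpha\beta}K_0^{*\beta}$, split the divergence into the weighted analogue of the previous lemma plus the $\nabla^\alpha w$ contribution, extract $S_w$ from the latter via the null decomposition $\nabla^\alpha w \approx -\tfrac12\uLs(w)\,\Ls^\alpha$ with $-\uLs(w)\approx w'$, and absorb the bad bulk $M\int w\,\tm[-2]\tp[2]|\Ls\phi|^2$ into $S_w$ using exactly the pointwise comparison $w\tm[-2]\lesssim w'$ that you identify. Your remark on the mild non-smoothness of $w$ at $r^*=t$ is a point the paper leaves implicit.
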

\section{\texorpdfstring{$L^2$ Estimates for $\bF$}{L2 Estimates for F}}
	In this section we prove an energy estimate which we will use to bound certain quantities derived from $\bF$. Our basic approach here follows the fractional Morawetz estimate used in \cite{LS}, with the substitution of the modified vector fields, and with additional calculations taken in order to bound the error terms. 
\subsection{Structure of the Estimate}
We first define the field
\begin{equation}
\okos = \frac12\left((1+\uls[2s])\Ls + (1+\abus[2s])\uLs\right)
\end{equation}
for $1/2 < s < (1+\gamma - 4\delta)/2 < 1$, which, for $r^* \leq t$, can be seen as an interpolation between the fields $\wt{Z} = \tfrac12(\uls\Ls + \us\uLs)$ and $\wt{K}_0 = \frac12(\uls[2]\Ls + \us[2]\uLs)$, which correspond to conformal Killing fields in Minkowski space. We add the field $\wpa_t$ in order to ensure that we have a timelike field close to the light cone.

Our core energy estimate will follow from the divergence theorem applied to a quantity $- T[\bG]_{\alpha\beta}\okos^\alpha w$. This does not apply directly to our field $\bF$, since the consequent weighted energy is finite only if the charge is 0. We will instead take the charge decomposition given in Section \ref{ChargeIntro}, and use this on $\bF^1$ and its derivatives.

\subsection{Weights and notation}
 Before we arrive at the statement for the basic estimate, we define the weights
\begin{subequations}
\label{WeightDefs}
\begin{align}
w &= \tm[2(s_0-s)]\overline\chi(-\us) + (1-\overline{\chi}(-\us)),\\
\widetilde{w} &=  (1 + (2-\us)^{2(s_0-s)})\overline{\chi}(-\us) + (1+(2+\us)^{-2{\overline{\mu}}})(1-\overline\chi(-\us)) + \\
&\quad + (1+\uls)^{-2{\overline{\mu}}}\left((2-\us)^{2(s_0-s) + 2{\overline{\mu}}}\overline{\chi}(-\us) + 1 - \overline{\chi}(-\us)\right),\nonumber \\
w_{\overline{\mu}} &= \tm[2(s_0-s)]\overline\chi(-\us) + \tm[2{\overline{\mu}}](1-\overline{\chi}(-\us)),\\
w' &= \tm[2(s_0-s) - 1]\overline\chi(-\us) + \tm[-1-2{\overline{\mu}}](1-\overline{\chi}(-\us)).
\end{align}
\end{subequations}
We recall the assumption that $s_0 < 3/2$, and $0 < 2{\overline{\mu}} < s-1/2$. Here $\overline\chi$ is the same as in equation \eqref{ochi}.

We briefly discuss these weights, which are adapted from weights in \cite{LS}. Our energy estimate will be conducted with respect to the weight $\widetilde{w}$, which satisfies the following properties:
\begin{equation}
\label{tildew}
\widetilde{w} \approx w, \qquad
-\frac12\uLs(\widetilde{w}) \approx w', \qquad
-\frac12\Ls(\widetilde{w}) \approx \tz[1+2{\overline{\mu}}]w'.
\end{equation}
Note that $w'$ describes the derivative of $\widetilde{w}$, and we never differentiate $w$ directly. Additionally, $w_{\overline{\mu}}$ satisfies
\[
\tm w' w_{\overline{\mu}} \approx w^2,
\]
which will be used in our current norm.
We also define
\[
\tau_w = \begin{cases}
\tm[1+2{\overline{\mu}}] & r^* \leq t, \\
\tm & r^* > t.
\end{cases}
\]
We motivate this by noting
\[
\tau_w^{-1} \approx \frac{w'}{w}, \qquad \tm[-1-2{\overline{\mu}}]\leq \tau_w^{-1} \leq \tm[-1].
\]

Recalling the definitions \eqref{def:chargedecomp} and \eqref{def:angularnorms} we also define
\begin{subequations}
\label{def:GNorms}
\begin{align}
\label{def:OG}|\overline{\bG}|^2 &= |\al[\bG]|^2 + |\rho[\bG]|^2 + |\sigma[\bG]|^2,\\
|\bG_N|^2 &= |\alpha[\bG]|^2 + |\rho[\bG]|^2 + |\sigma[\bG]|^2 + |\alpha[\bG]||\ual[\bG]|, \\
|\bG|^2 &= |\al[\bG]|^2 + |\rho[\bG]|^2 + |\sigma[\bG]|^2 + |\ual[\bG]|^2,
\end{align}
\end{subequations}
The components in $|\overline{\bG}|^2$ is analogous to derivatives of a scalar field $\phi$ along the light cone, and $|\bG_N|^2$ is analogous to a form satisfying the null condition. Likewise, we can define the current associated with $\bG$,
\begin{equation}
\bJ[\bG]_\alpha = \nabla^\beta \bG_{\alpha\beta},
\end{equation}
and the spacetime weighted current $L^2$ norm
\begin{equation}
\label{JL2}
\left\lVert \bJ\right\rVert_{L^2[w]} = \left\lVert \tp[s]\tz[-1/2-{\overline{\mu}}]\tm[1/2] \bJ^{\uLs} w^{1/2}_{\overline{\mu}}\right\rVert_2 + \left\lVert\tp[s]\tm[1/2]|\bJ^{\Bs{i}}|w_{\overline{\mu}}^{1/2}\right\rVert_2 + \left\lVert\tz[s-1/2-{\overline{\mu}}]\tm[s+1/2]|\bJ^{\Ls}| w_{\overline{\mu}}^{1/2}\right\rVert_2.
\end{equation} 
We define also the energies we will use in our theorem. For any two-form $\bG_{\alpha\beta}$ defined on $[0,T] \times \mathbb{R}^3$, we define the time-slice energy
\begin{equation}
E_0[\bG](T) = \sup_{0 \leq t \leq T}\intsig{t}\left(\tp[2s]\left(|\al|^2 + |\rho|^2 + |\sigma|^2\right) + \tm[2s]|\ual|^2\right)w \, dx,
\end{equation}
the spacetime energy
\begin{equation}
S_0[\bG](T) = \int_0^T\intsig{t} \left(\tp[2s]|\al|^2 + \tz[1+2{\overline{\mu}}]\left(\tp[2s](|\rho|^2 + |\sigma|^2) + \tm[2s]|\ual|^2\right)\right)w' \, dx \, dt,
\end{equation}
and, defining the region $C_{U, T}$ to be the subset of $[0,T]\times\mathbb{R}^3$ where $\us = U$, the weak conical energy
\begin{equation}
C_0[\bG](T) = \sup_{U}\int_{C_{U, T}}-\left(\tp[2s]\,T[\bG](\Ls, \nabla \us) + \tm[2s]\,T[\bG](\uLs, \nabla\us)\right)w \, dV_C, 
\end{equation}
where $T[\bG]$ is defined in \eqref{def:EMF}. We consequently define the weighted energy of a 2-form with zero charge,
\begin{equation}
\mathcal{E}_0[\bG](T) = E_0[\bG](T) + S_0[\bG](T) + C_0[\bG](T),
\end{equation}
as well as the iterated energy
\begin{equation}
\mathcal{E}_k[\bG](T) = \sum_{\substack{|I| \leq k}} \mathcal{E}_0[\Lie{\wX}^I\bG](T).
\end{equation}

\begin{Theorem}\label{thm:L2F}
Let $\bG$ be a 2-form defined on $[0,T]\times\mathbb{R}^3$ which satisfies $\nabla^\beta \bG_{\alpha\beta} = \bJ$. Then, $\bG$ satisfies the estimate
\begin{equation}
\label{FEnergyEstimate}
E_0[\bG](T) + S_0[\bG](T) + C_0[\bG](T) \lesssim E_0[\bG](0)  + \lnorm \bJ[\bG]\rnorm^2_{L^2[w]}.
\end{equation}
provided that the quantities on the right hand side are bounded.
\end{Theorem}
\begin{Remark}
We say $C_0$ is weak because it is not positive definite. In particular, we do not know that the vectors $\Ls, \nabla u$ are causal, so $C_0$ could have an (unsigned) error of size $\varepsilon_g|\ual|^2$. Later we will bound this error by higher order energies using weighted Klainerman-Sobolev-type inequalities applied to the energy $E_0$. For now we note that $C_0 \geq 0$ by taking $U=T$ in the definition.
\end{Remark}
\begin{proof}
We take
\begin{equation}
P[\bG]_\alpha = -T[\bG]_{\alpha\beta}\okos^\beta \widetilde{w}.
\end{equation}
which has divergence
\begin{equation}
\nabla^\alpha P[\bG]_\alpha = \bG_{\beta\gamma}\okos^\beta J[\bG]^\gamma \widetilde{w} - T[\bG]_{\alpha\beta}\okos^{\beta;\alpha}\widetilde{w} - T[\bG]_{\alpha\beta}\okos^\beta \nabla^\alpha(\widetilde{w}).
\end{equation}
We apply the divergence theorem on $[0,t]\times\mathbb{R}^3$ (with $t\in [0,T]$) to get
\begin{equation}
\intsig{T}-P[\bG]^0 +  \intsig{0}P[\bG]^0 = \int_0^t \intsig{t}\left(\bG(\okos, \bJ[\bG]) \widetilde{w} - T[\bG]_{\alpha\beta}\okos^{\beta;\alpha}\widetilde{w} - T[\bG]_{\alpha\beta}\okos^\beta \nabla^\alpha(\widetilde{w})\right),
\end{equation}
where all integrals are evaluated with respect to the volume element of $g$.

We briefly detail our strategy. The left hand side will give $E_0(0)$ and $E_0(T)$. We will bound the term containing $(\bG(\okos, \bJ)$ using our current norm combined with Hölder's inequality. The approximate conformality of $\okos$ will allow us to discard the term containing $Q[\bG]_{\alpha\beta}\okos^{\beta;\alpha}$. The term containing $\nabla\widetilde{w}$ will give $S_0$. Additionally, we may repeat this estimate over regions exterior to the cone $\us = c$ to get $C_0$.

Recalling the identity \eqref{DT}, we have the commutators
\begin{equation}
[\okos, \Ls] = -\tfrac12\Ls(\uls[2s])\Ls, \qquad
[\okos, \uLs] = -\tfrac12\uLs(\abus[2s])\uLs, \qquad
[\okos, \Bs{j}] = -\tfrac12\tfrac{\uls[2s] - \abus[2s]}{r^*}\Bs{j}.
\end{equation}
The identity \eqref{LieTwoForm} gives the null decomposition
\begin{subequations}
\label{KDefLower}
\begin{align}
(\Lie{\okos}g)_{\Ls\Ls} &= \okos(g_{\Ls\Ls}) + \Ls(\uls[2s])g_{\Ls\Ls},\\
 (\Lie{\okos}g)_{\uLs\uLs} &=  \okos(g_{\uLs\uLs}) + \uLs(\us[2s])g_{\uLs\uLs},\\
(\Lie{\okos}g)_{\Ls\uLs} & = \okos(g_{\uLs\Ls}) + 2s(\uls[2s-1] + \text{sgn}(\us)\abus[2s-1])(g_{\uLs\Ls}),\\
 (\Lie{\okos}g)_{\Ls \Bs{j}} &= \okos(g_{\Ls\Bs{j}}) + \left(2s\uls[2s-1] + \tfrac12\tfrac{\uls[2s] - \abus[2s]}{r^*}\right)g_{\Ls\Bs{j}}, \\
 (\Lie{\okos}g)_{\uLs \Bs{j}} &= \okos(g_{\uLs\Bs{j}}) + \left(2s\cdot\text{sgn}(\us)\abus[2s-1] + \tfrac12\tfrac{\uls[2s] - \abus[2s]}{r^*}\right)g_{\uLs\Bs{j}}, \\
(\Lie{\okos}g)_{\Bs{i}\Bs{j}} &= \okos(g_{\Bs{i}\Bs{j}}) + \tfrac{\uls[2s] - \abus[2s]}{r^*}g_{\Bs{i}\Bs{j}},
\end{align}
\end{subequations}
with other components following from symmetry. Additionally,
\begin{equation}
\tfrac{\uls[2s] - \abus[2s]}{r^*} = t^{2s-1}\tfrac{(1+r^*/t)^{2s} - |1-r^*/t|^{2s}}{r^*/t} = r^{*2s-1}((1+t/{r^*})^{2s} - |1-t/r^*|^{2s}),
\end{equation}
and take the $t^{2s-1}$ and $r^{*2s-1}$ terms for $r^*/t \to 0$, $t/r^* \to 0$ respectively, which gives us
\[
\tfrac12\tfrac{\uls[2s] - \abus[2s]}{r^*} \lesssim \tp[2s-1]
\]
Combining \eqref{MetLI} with \eqref{KDefLower} and the relation $2s + 2\delta < 1+\gamma$ gives
\begin{subequations}\label{DefTenEstimates}
\begin{align}
\left|(\Lie{\okos}g)_{\Ls\Ls}\right| &\lesssim \varepsilon_g \tp[-1-\delta]\tm[\gamma]\\
\left|(\Lie{\okos}g)_{\uLs\uLs}\right| &\lesssim  \varepsilon_g\tp[2s-2+\delta],\\
\left|(\Lie{\okos}g)_{\Ls\uLs} + 2\left(2s(\uls[2s-1] + \text{sgn}(\us)\abus[2s-1])\right)\right| &\lesssim \varepsilon_g\tp[2s-2+\delta],\\
\left|(\Lie{\okos}g)_{\Ls \Bs{j}} \right|&\lesssim \varepsilon_g\tp[-1-\delta]\tm[\gamma]\\
\left|(\Lie{\okos}g)_{\uLs \Bs{j}} \right|&\lesssim  \varepsilon_g\tp[2s-2+\delta], \\
\left|(\Lie{\okos}g)_{\Bs{i}\Bs{j}} - \delta_{ij}\tfrac{\uls[2s] - \abus[2s]}{r^*}\right| & \lesssim  \varepsilon_g\tp[2s-2+\delta],
\end{align}
\end{subequations}
The right hand side vanishes if we replace $g$ with $\mhat$.We now look at terms which will appear in the energy momentum tensor. First, recalling the definition \eqref{def:GNorms}, a null decomposition on $\bG$ combined with \eqref{MetricApprox2} and \eqref{MetLI} gives
\begin{equation}
\left|\bG_{\gamma\delta}\bG^{\gamma\delta} - 2|\sigma|^2 + 2|\rho|^2 + 2\alpha\cdot\ual\right| \lesssim \varepsilon_g(\tp[-1+\delta]|\bG_N|^2 + \tm[\gamma]\tp[-1-\gamma+\delta]|\bG|^2).
\end{equation}
Similar reasoning gives
\begin{subequations}
\label{QFest}
\begin{align}
\left|T_{\Ls\Ls}[\bG] - |\al|^2\right| &\lesssim \varepsilon_g(\tp[-1+\delta]|\al|^2 + \tz[\gamma]\tp[-1+\delta]|\bG_N|^2 + \tz[2\gamma]\tp[-2+2\delta]|\bG|^2), \\
\left|T_{\Ls\uLs}[\bG] - (|\sigma|^2 + |\rho|^2)\right| &\lesssim \varepsilon_g(\tp[-1+\delta]|\bG_N|^2 + \tz[\gamma]\tp[-1+\delta]|\bG|^2), \\
\left|T_{\uLs\uLs}[\bG] - |\ual|^2\right| &\lesssim \varepsilon_g\tp[-1+\delta]|\bG|^2, \\
\left|T_{\Bs{1}\Bs{1}} + T_{\Bs{2}\Bs{2}}- (|\sigma|^2 + |\rho|^2)\right| &\lesssim \varepsilon_g(\tp[-1+\delta]|\bG_N|^2 + \tz[\gamma]\tp[-1+\delta]|\bG|^2), \\
\left|T_{\Bs{j}\Bs{j}}\right| &\lesssim |\bG_N|^2 + \varepsilon_g\tz[\gamma]\tp[-1+\delta]|\bG|^2,\\
\left|T_{\Bs{1}\Bs{2}}\right| &\lesssim |\bG_N|^2 + \varepsilon_g\tz[\gamma]\tp[-1+\delta]|\bG|^2,\\
\left|T_{\Ls \Bs{j}}\right| &\lesssim |\alpha||\overline{\bG}| + \varepsilon_g(\tz[\gamma]\tp[-1+\delta]|\bG||\overline{\bG}| + \tz[2\gamma]\tp[-2+2\delta]|\bG|^2),\\
\left|T_{\uLs \Bs{j}}\right| &\lesssim |\bG||\overline{\bG}| + \varepsilon_g(\tz[\gamma]\tp[-1+\delta]|\bG|^2).
\end{align}
\end{subequations}

We now look at terms containing
\[
\nabla^\alpha\okos^\beta T_{\alpha\beta}.
\]
We will symmetrize it and apply the identity \eqref{DT}.
\begin{Lemma}
\label{DTLemma}
For a symmetric (0,2)-tensor $T$ and a metric $g$ satisfying \eqref{MetLI},
\begin{equation}
\label{DTDecomp}
\nabla^\alpha\okos^\beta T_{\alpha\beta} =  \tfrac12\left(\tfrac{\uls[2s] - \abus[2s]}{r^*} - \drs\left(\uls[2s] - \abus[2s]\right)\right)T_{\Ls\uLs}  + \tfrac12\tfrac{\uls[2s] - \abus[2s]}{r^*}\mathrm{tr }(T) + R_1[T],
\end{equation}
where $R_1[T]$ is a remainder quantity satisfying
\begin{equation}\label{est:R1Bounds}
\left|R_1[T]\right|\lesssim \varepsilon_g\left(\tfrac{\tm}{\tp[1+2\delta]}|T| + \tp[2s-2+2\delta]|T|_{\mcT\mcU}\right).
\end{equation}
Additionally,
\begin{equation}
\label{Lemma32}
\tfrac12\left(\tfrac{\uls[2s] - \abus[2s]}{r^*} - \drs\left(\uls[2s] - \abus[2s]\right)\right)\geq 0.
\end{equation}
If $T$ is the energy-momentum tensor for some 2-form $\bG$, then
\begin{equation}
\label{ErrorF}
\left|R_1[T]\right| \lesssim \frac{\varepsilon_g}{\tp[1+2\delta]}\left(\tp[2s](|\al[\bG]|^2 + |\rho[\bG]|^2 + |\sigma[\bG]|^2) + \tm[2s]|\ual[\bG]|^2\right).
\end{equation}
\end{Lemma}
\begin{proof}
Symmetrizing and applying the identity \eqref{DT} gives
\begin{equation}\label{id:DTExp}
\nabla^\alpha\okos^\beta T_{\alpha\beta} = \frac12(\mhat + (g-\mhat))^{\alpha\gamma}(\mhat + (g-\mhat))^{\beta\delta}(\Lie{\okos}g)_{\alpha\beta}T_{\gamma\delta}.
\end{equation}
We first examine the terms containing $\mhat^{\alpha\gamma}\mhat^{\beta\delta}$, for which we write
\begin{equation}
\frac12\mhat^{\alpha\gamma}\mhat^{\beta\delta}(\Lie{\okos}g)_{\alpha\beta}T_{\gamma\delta} =\frac12\mhat^{\alpha\gamma}\mhat^{\beta\delta}(\Lie{\okos}\mhat)_{\alpha\beta}T_{\gamma\delta} +  \frac12\mhat^{\alpha\gamma}\mhat^{\beta\delta}(\Lie{\okos}(g-\mhat))_{\alpha\beta}T_{\gamma\delta}.
\end{equation}
We take the null decomposition and apply the estimates \eqref{DefTenEstimates} (recalling that the right hand side vanishes for $g = \mhat$) to get
\begin{equation}\label{id:DefTenMink}
\frac12\mhat^{\alpha\gamma}\mhat^{\beta\delta}(\Lie{\okos}\mhat)_{\alpha\beta}T_{\gamma\delta} = -\frac14 \drs\left(\uls[2s] - \abus[2s]\right)(T_{\Ls\uLs} + T_{\uLs\Ls}) + \frac12\sum_{i}\tfrac{\uls[2s] - \abus[2s]}{r^*}T_{\Bs{i}\Bs{i}}.
\end{equation}
This gives the first two terms on the left hand side of \eqref{DTDecomp} up to the error term
\[
\tfrac12\tfrac{\uls[2s] - \abus[2s]}{r^*}(g - \mhat)^{\alpha\beta}T_{\alpha\beta}.
\]
Expanding in the null frame and applying \eqref{RMetric} allows us to bound this within $R_1$. Similarly, expanding and applying \eqref{DefTenEstimates} gives the bound
\begin{equation}
\frac12\mhat^{\alpha\gamma}\mhat^{\beta\delta}(\Lie{\okos}(g-\mhat))_{\alpha\beta}T_{\gamma\delta} \lesssim \varepsilon_g\Big(\tfrac{\tm}{\tp[1+2\delta]}|T| + \tp[2s-2+\delta]|T|_{\mcT\mcU}\Big).
\end{equation}
We now look at terms in \eqref{id:DTExp} containing at least one factor of $g-\mhat$. We may reduce this to bounding
\[
\nabla^\alpha\okos^\beta T_{\alpha\beta} = \frac12 (g-\mhat)^{\alpha\gamma}\mhat^{\beta\delta}(\Lie{\okos}\mhat)_{\alpha\beta}T_{\gamma\delta},
\]
which follows from symmetry of $T$ and the fact that all terms which are quadratic in $g-\mhat$ (including $\Lie{\okos}(g-\mhat)$) must decay like $\tp[2s-3+2\delta]$ or better, which follows from \eqref{DefTenEstimates} and \eqref{RMetric}. We take the null decomposition. We have only one nonzero term containing $T_{\uLs\uLs}$ which we can bound by
\[
|(g-\mhat)^{\uLs\uLs}\mhat^{\Ls\uLs}(\Lie{\okos}\mhat)_{\uLs\Ls}T_{\uLs\uLs}|\lesssim \varepsilon_g \tm[\gamma]\tp[2s-2-\gamma+\delta]|T| \leq\varepsilon_g \tm\tp[-1-\delta].
\]
All other components may be bounded pointwise by $\varepsilon_g\tp[2s-2+\delta]|T|_{\mcT\mcU}$.

In order to prove \eqref{Lemma32} we fix $t$ and write $U(r^*) = \uls[2s] - \abus[2s]$. Then, $U(0) = 0$, and $U'' < 0$. By the mean value theorem, $\frac{U(r^*)}{r^*} = U'(r^\star)$ for some $r^\star \in (0, r^*)$. Since $U'$ is decreasing, $\frac{U(r^*)}{r^*} - U'(r^*)$ is always positive.

The estimate \eqref{ErrorF} follows almost directly from \eqref{est:R1Bounds} and \eqref{QFest}, noting
\[
\tp[2s-2+\delta]|T|_{\mcT\mcU}\lesssim \tp[2s-2+\delta]|\bG||\overline{\bG}| + \tp[2s-3+2\delta]|\bG|^2 \lesssim \tp[2s-1-\delta]|\overline{\bG}|^2 + \tp[2s-3+3\delta]|\bG|^2.
\]
\end{proof}
Now we look at the terms where the derivative falls on the weight. We decompose
\begin{equation}\label{WeightDecompF}
\nabla^\alpha(w) = g^{\alpha\uLs}\uLs(\widetilde{w}) + g^{\alpha\Ls}\Ls(\widetilde{w}) = -\tfrac12\uLs(\widetilde{w})\Ls^\alpha - \tfrac12\Ls(\widetilde{w})\uLs^\alpha + (g-\mhat)^{\alpha\uLs}\uLs(\widetilde{w})+(g-\mhat)^{\alpha\Ls}\Ls(\widetilde{w}).
\end{equation}
\eqref{QFest}, along with the inequalities $1+2\overline\mu < \min(2s, 2\gamma)$, $1+\gamma-2s \geq 4\delta$, implies
\begin{subequations}
\begin{align}
|T_{\okos \Ls} - (\tp[2s]|\alpha|^2 + \tm[2s](|\sigma|^2 + |\rho|^2)|&\lesssim\varepsilon_g(\tp[2s-1+\delta]|\alpha|^2 + \tm[\gamma]\tp[2s-1-\gamma+\delta]|\bG_N|^2 + \tz[1+2\overline{\mu}]\tm[\gamma]|\bG|^2)\\
|T_{\okos \uLs} - (\tp[2s](|\sigma|^2 + |\rho|^2)+ \tm[2s]|\ual|^2)|&\lesssim \varepsilon_g(\tp[2s-1+\delta]|\bG_N|^2 + \tm[2s]|\bG|^2).
\end{align}
\end{subequations}
The inequality 
\[
\tz[\gamma]\tp[2s-1+\delta]|\bG_N|^2 \lesssim \tp[2s]\tz[1+2\mu]|\overline{\bG}|^2 + \tp[2s]|\alpha|^2 + \tp[2s-2+2\delta]\tz[2\gamma]|\bG|^2
\]
implies
\begin{equation}
-\tfrac12\uLs(\widetilde{w})T_{\okos \Ls} - \tfrac12\Ls(\widetilde{w})T_{\okos \uLs} \gtrsim (1-C\varepsilon_g)S_0[\bG].
\end{equation}
Therefore, for sufficiently small $\varepsilon_1$, we obtain the requisite spacetime integral. To bound other terms, we start with the estimates
\begin{subequations}\label{derwerrors}
\begin{align}
|T_{\okos \alpha}(g-\mhat)^{\alpha\uLs}\uLs(\widetilde{w})|&\lesssim  \varepsilon_g\tp[-1+\delta](|T_{\okos \Ls}|+\tz[\gamma]|T_{\okos \uLs}|+\tz[\gamma]|T_{\okos \Bs{1}}|+\tz[\gamma]|T_{\okos \Bs{2}}|)w'\\
|T_{\okos \alpha}(g-\mhat)^{\alpha\Ls}\Ls(\widetilde{w})|&\lesssim \varepsilon_g\tp[-1+\delta]\tz[1+2\overline\mu](|T_{\okos \Ls}|+|T_{\okos \uLs}|+|T_{\okos \Bs{1}}|+|T_{\okos \Bs{2}}|)w'
\end{align}
\end{subequations}
These may be straightforwardly bounded using \eqref{QFest}. First,
\[
\int_0^T\intsig{t}|\varepsilon_g\tp[-1+\delta]T_{\okos\Ls}w'|\lesssim \varepsilon_gS_0.
\]
We handle metric error terms on the right hand side of \eqref{QFest} (those containing $\varepsilon_g$) by first bounding the correlated terms in \eqref{derwerrors} (pointwise) by $\varepsilon_g\tz[\gamma]\tp[2s-2+2\delta]|\bG|^2w'$. Using $2s-2-\gamma+2\delta < -1-2\overline{\mu}$, these are bounded by $S_0[\bG]$. The remaining integral is bounded by
\[
C\int_0^T\intsig{t}|\varepsilon_g\tp[-1+\delta]\tz[\gamma](\tp[2s](|\alpha|^2+|\rho|^2+|\sigma|^2) + \tm[2s]|\ual|^2)w'|.
\]
The inequality $\tz[\gamma]\tp[-1+\delta]\leq\tz[1+2\overline\mu]$ implies
\begin{equation}\label{WeightBoundF}
\int_0^T\intsig{t}T[\bG]_{\alpha\beta}\okos^\beta \nabla^\alpha(\widetilde{w})\, dx\, dt \geq (1-C\varepsilon_g)S_0[\bG]
\end{equation}
We now bound the boundary terms. Since $|g|\approx 1$, it suffices to show that
\begin{equation}
\label{SliceBDF}
\intsig{t}P[\bG]^0 \approx E_0[\bG](t).
\end{equation}
In order to bound this, we apply \eqref{MetricApprox2} and \eqref{MetLI}, and \eqref{QFest}. 

The bounds for the light cone follow from divergence theorem on regions $t\in (0,T)$, $\us \leq U$. It suffices to bound
\begin{equation}
\int_{C_{U, T}}-\nabla^\alpha(\us)\sqrt{|g|}P[\bG]_\alpha \, dV_{\overline{g}},
\end{equation}
where $V_{\overline{g}}$ is the volume with respect to the induced metric, and which is equivalent to the background Minkowski metric. By \eqref{MetricApprox2} and \eqref{MetLI},
\begin{equation}
\left|-\nabla^\alpha(\us)P[\bG]_\alpha - \frac12 P[\bG]_\Ls\right| \lesssim \varepsilon_g\tp[-1+\delta]|P[\bG]|_\Ls +\varepsilon_g \tz[\gamma]\tp[-1+\delta]\left(|P[\bG]_\Bs{1}| +|P[\bG]_\Bs{2}| + |P[\bG]_\uLs|\right).
\end{equation}
\eqref{QFest} gives the component estimate
\begin{equation}
\label{BadConeEstimates}
\left|-\nabla^\alpha(\us)P[\bG]_\alpha - \frac12 P[\bG]_\Ls\right| \lesssim \varepsilon_g\left(\tp[2s-1+\delta]|\alpha|^2 + \tm[2s]\tp[-1+\delta]|\overline{\bG}|^2 + \tm[2s+\gamma]\tp[-1-\gamma+\delta]|\bG|^2\right)\widetilde{w}.
\end{equation}
This is unfortunately not positive definite in itself. We define
\begin{equation}
C^+_0[\bG](T) =  \sup_{\us}\int_{C_{U, T}}\left(\tp[2s]|\alpha|^2+ \tm[2s](|\sigma|^2+|\rho|^2)\right)w \, dV_C, 
\end{equation}
In the course of our $L^\infty$ estimates we will show
\begin{equation}
\label{C0star}
|C^+_0[\bG](T)|\lesssim |C_0[\bG](T)| + \varepsilon_g|E_2[\bG](T)|.
\end{equation}
 by integrating certain $L^\infty$ estimates which depend only on our time-slice and interior energies.
The result (except for the conical energy terms) follows from an application of the divergence theorem (in Minkowski space) on the quantity $\sqrt{|g|}P[\bG]^\alpha$, over the time slab $[0,T] \times \mathbb{R}^3$. First,
\[
\intsig{T}\sqrt{|g|}P[F]^0 -  \intsig{0}\sqrt{|g|}P[F]^0 = \int_0^T \intsig{t}\sqrt{|g|} \left(\bG(\okos, J) \widetilde{w} - T[\bG]_{\alpha\beta}\okos^{\beta;\alpha}\widetilde{w} - T[\bG]_{\alpha\beta}\okos^\beta \nabla^\alpha(\widetilde{w})\right)\, dx \, dt.
\]
Applying \eqref{SliceBDF} on the left, Lemma \ref{DTLemma} and \eqref{WeightBoundF} on the right, and moving the resulting terms to the left, gives
\begin{equation}
E_0[\bG](T) + S_0[\bG](T) \lesssim E_0[\bG](0) + \int_0^T \intsig{t}\left|\bG(\okos, \bJ)\right|w\, dx\, dt + \int_0^T \tfrac{E_0(t)}{(1+t)^{1+2\delta}} \, dt.
\end{equation}
The last term on the right is bounded by $C\varepsilon_1E_0[\bG](T)$. To bound the current term, we take the null decomposition and obtain the bounds
\begin{subequations}
\begin{align}
|\bJ^{\uLs}\bG_{\okos\uLs}| &\lesssim \tp[2s]|\bJ^{\uLs}||\rho| \\
|\bJ^\Bs{j}\bG_{\okos\Bs{j}}| &\lesssim |\bJ^{\Bs{j}}|\left(\tp[2s]|\alpha| + \tm[2s]|\ual|\right) \\
|\bJ^{\Ls}\bG_{\okos\Ls}| &\lesssim \tm[2s]|\bJ^{\Ls}||\ual|
\end{align}
\end{subequations}
Applying H\"older's inequality with the current norm \eqref{JL2} gives
\begin{equation}
\int_0^T \intsig{t}|\bG(\okos, \bJ)|w \, dx \, dt \lesssim S_0[\bG](T)^{1/2}\left\lVert \bJ \right\rVert_{L^2[w]} \lesssim C^{-1}S_0[\bG](T) + C\left\lVert\bJ\right\rVert^2_{L^2[w]}.
\end{equation}
For some $C$ independent of $\varepsilon, \varepsilon_g$, we can subtract the $S_0[F](T)$ term from the left side.

Finally, in order to include the conical energy, we repeat the divergence theorem on regions of the form
\[
([0,T] \times\mathbb{R}^3) \cap \{\us \leq U\}
\]
and take the maximum integral over the reduced light cones.
\end{proof}
\section{\texorpdfstring{$L^2$ Estimates for $\bphi$}{L2 Estimates for Phi}}
	We now establish a conformal energy estimate for a generic function $\phi$. Our approach is modeled by \cite{BS06}, and largely follows that of Section \ref{ModelMorawetz}.

We recall the conformal Killing field
\[
K_0 = \frac12(\underline{u}^2(\partial_t+\partial_r) + u^2(\partial_t - \partial_r)).
\]
Applying the energy estimate gives a term like $t(\text{tr }Q[\bphi])$, which vanishes for $Q[\bG]$. In the Minkowski space one could apply the conformal transformation
\[
{^I}m = \tfrac{1}{r^2}m,
\]
in which $K_0$ is indeed Killing away from the spatial origin, and combine it with a similar estimate using the fundamental solution of the wave equation (cf. \cite{LS}). However, we will instead use a Hardy-type estimate which seems more robust under perturbations at the cost of requiring greater initial decay. Away from the spatial origin we have the identity
\begin{equation}
\label{rBoxComm}\Box_g\phi = \Box_g^{\mathbb{C}}\tfrac{r^*\phi}{r^*} = \tfrac{1}{r^*}\Box_g^{\mathbb{C}}(r^*\phi) + 2 \nabla^\alpha(\tfrac{1}{r^*})D_\alpha(r^*\phi) + (r^*\phi)\Box_g(\tfrac{1}{r^*})
\end{equation}
The singular behavior near the spatial origin is of concern, as $\frac{1}{r^*}$ is no longer a solution of the wave equation. We instead show take an estimate with a background metric $\mhat$, as defined in \ref{def:hatm}, and deal with the remainder separately. We define the energy
\begin{subequations}
\begin{equation}
\label{TSEnergyPhiD}
E_0[\phi](T) = \sup_{0 \leq t \leq T}\intsig{t}\left(\tp[2s]\left(\left|\tfrac{D_\Ls(r^*\phi)}{r^*}\right|^2 + |\slashed{D}\phi|^2 + \left|\tfrac{\phi}{r^*}\right|^2\right) + \tm[2s]|D_\uLs\phi|^2\right) w \, dx.
\end{equation}
It follows from Lemma \ref{HardyEstimate} that this is equivalent to
\begin{equation}
\label{TSEnergyPhiE}
\sup_{0 \leq t \leq T}\intsig{t}\left(\tp[2s]\left(\left|\tfrac{D_\Ls(r^*\phi)}{r^*}\right|^2 + |\slashed{D}\phi|^2\right) + \tm[2s]\left|\tfrac{D_\uLs(r^*\phi)}{r^*}\right|^2\right) w \, dx.
\end{equation}
\end{subequations}

Likewise, we have the spacetime energy
\begin{equation}
\label{STEnergyPhiD}
S_0[\phi](T) = \int_0^T \intsig{t}\left(\tp[2s]\left|\tfrac{D_\Ls(r^*\phi)}{r^*}\right|^2 + \tz[1+2\overline{\mu}]\left(\tp[2s]\left(|\slashed{D}\phi|^2 +\left|\tfrac{\phi}{r^*}\right|^2 \right) + \tm[2s]\left(|D_\uLs\phi|^2 \right)\right)\right) w' \, dx \, dt.
\end{equation}
and the weak conical energy
\begin{equation}
C_0[\phi](T) = \sup_{U}\int_{C_{U,T}} T(\nabla \us,  \okos) w \, dV_C,
\end{equation}
where the partial energy momentum tensor $T$ is defined in equation \eqref{EMPhi}. We take the opportunity to define $U^\star$ to be the value of $U$ for which $C_0$ is defined, and $C^U_{ext}$ to be the set
\[
C^U_{ext}= \bigcup_{\us\leq U}C_{\us,T}
\]
We then define the strong conical energy
\[
C_0^+[\phi](U,T) = \int_{C_{U,T}}\left(\left|\tp[2s]\tfrac{D_{\Ls}(r^*\phi)}{r^*}\right|^2 + \tm[2s]\sum_i\left|D_{\Bs{i}}\phi\right|^2 + \tp[2s]\tz[2]\left|\tfrac{\phi}{r^*}\right|^2\right)w \, dV_C.
\]
We will show later
\[
|C_0^+[\phi](U,T)| \lesssim |C_0[\phi](T)| + \varepsilon_g |E_2[\phi](T)|.
\]
 We also define the combined energies
\begin{equation}
\mathcal{E}_0[\phi](T) = E_0[\phi](T) + S_0[\phi](T) +  C_0[\phi](T)
\end{equation}
and
\begin{equation}
\mathcal{E}_k[\phi](T) = \sum_{|I| \leq k} \mcE_0[D_{\wX}^I\phi](T).
\end{equation}
If $(\bF, \bphi)$ solves \eqref{MKG} we define the full energy
\begin{equation}
\mathcal{E}_k(T) = \mathcal{E}_k[\bphi] + \mathcal{E}_k[\bF^1] + |q[\bF]|^2.
\end{equation}
When there is no ambiguity we write $\mathcal{E}_k$. We define the analogous quantities $E_k[\phi](T)$, $S_k[\phi](T)$, $C_k[\phi](T)$ similarly. We can now state the main theorem of this section.
\begin{Theorem}
\label{L2PhiMain}
For a sufficiently regular function $\phi$ with sufficient spatial decay on $[0,T]\times\mathbb{R}^3$, we have
\begin{equation}
\mathcal{E}_0[\phi](T) \lesssim \mathcal{E}_0[\phi](0) + \left(|q|+\lnorm {\bF^1} \rnorm^{red}_{L^{\infty}[w]}\right)\mathcal{E}_0[\phi](T) + \lnorm \tp[s]\tm[1/2](\Box_g^{\mathbb{C}}\phi) w_{\delta}^{1/2}\rnorm_2^2,
\end{equation}
where
\begin{equation}
\label{def:Fred}
\lnorm {\bF^1} \rnorm^{red}_{L^{\infty}[w]} = \sup_{[0,T]\times\mathbb{R}^3}\tp\tm[1/2+s]\ual[{\bF^1}]w^{1/2} + \tp[1+s]\tm[1/2](|\rho[{\bF^1}]| + |\sigma[{\bF^1}]|)w^{1/2} + \tp[3/2+s]|\al[{\bF^1}]|w^{1/2}.
\end{equation}
\end{Theorem}
We will use \ref{Lem:FirstEEPhi} to show this for $\mhat$, then we handle the error terms using Lemma \ref{RemainderDivergence}. We start with some preliminary estimates.
\begin{Remark}
For the majority of this section we will raise and lower the metric with respect to ${\mhat}$. To reduce ambiguity we will use the notation 
\[
\nabla_{\mhat}^\alpha = \mhat^{\alpha\beta}\nabla_\alpha, \qquad D_{\mhat}^\alpha = \mhat^{\alpha\beta}D_\alpha.
\]
\end{Remark}

We note the following inequalities which will be used many times in the future.
\begin{Proposition} Given the definitions \eqref{TSEnergyPhiD} and \eqref{STEnergyPhiD}, the following estimates hold:
\begin{subequations}
\label{STEnergy}
\begin{align}
\label{STEA}
\lnorm \tp[-1/2-\delta]\tm[s]|D\phi|w^{1/2} \rnorm_2^2 &\lesssim E_0[\phi](T) \\
\label{STEB}
\lnorm \tp[s-3/2-\delta]|\phi|w^{1/2} \rnorm_2^2 &\lesssim E_0[\phi](T) \\
\label{STEC}
\lnorm \tp[s-1/2-\delta]|\overline{D}\phi|w^{1/2} \rnorm_2^2 &\lesssim E_0[\phi](T) \\
\label{STED}
\lnorm \tp[2s-2+\delta]|\overline{D}\phi||D\phi|w\rnorm_1 &\lesssim E_0[\phi](T) \\
\label{STEE}
\lnorm \tp[-s] \frac{D_{\okos}(r^*\phi)}{r^*}(w')^{1/2}\rnorm_2^2 &\lesssim S_0[\phi](T).
\end{align}
\end{subequations}
\end{Proposition}
\begin{proof}
The inequalities \eqref{STEA}-\eqref{STED} follow from the inequality
\begin{equation}
\int_0^T \frac{E_0[\phi](t)}{(1+t)^{1+2\delta}}\, dt \lesssim E_0[\phi](T).
\end{equation}
 Additionally, \eqref{STED} follows from an application of H\"older's inequality, combined with the inequality $s-3/2+2\delta < -1/2-\delta$. Inequality \eqref{STEE} comes from our spacetime energy norm and the inequality $\frac12 + \overline\mu < s$. 
\end{proof}
 Then it follows from \eqref{MetLI}, \eqref{MetricApprox2}, \eqref{STEA} and \eqref{STED}, as well as the inequality $2s-1-\gamma+4\delta < 0$, that
\begin{equation}
\label{STEF}
\lnorm\tp[2s-1]H_1^{\gamma\delta}D_\gamma\phi\overline{D_\delta\phi} w\rnorm_1 \lesssim \varepsilon_g E_0[\phi](T),
\end{equation}
which will be useful later. We define the modified energy momentum tensor
\begin{equation}
\label{EMPhi}
\widetilde{T}[\phi]_{\alpha\beta} = \frac{1}{r^{*2}}\mathfrak{R}\left(D_\alpha(r^*\phi)\overline{D_\beta(r^*\phi)} - \frac12 \mhat_{\alpha\beta}\mhat^{\gamma\delta}D_\gamma(r^*\phi)\overline{D_\delta(r^*\phi)} \right).
\end{equation}

It follows that
\begin{equation}
\nabla_\mhat^\alpha \widetilde{T}[\phi]_{\alpha\beta} = -\tfrac{2\nabla_\mhat^\alpha(r^*)}{r^{*}}\widetilde{T}[\phi]_{\alpha\beta} + \tfrac{1}{r^{*2}}\mathfrak{R}\left(\Box_\mhat^{\mathbb{C}}(r^*\phi)\overline{D_\beta(r^*\phi)}\right) + \tfrac{1}{r^{*2}}\mathfrak{I}\left(r^*\phi \overline{D_{\mhat}^{\alpha}(r^*\phi)}\right)\bF_{\alpha\beta}.
\end{equation}

The identity \eqref{rBoxComm} gives
\[
\tfrac{1}{r^{*2}}\mathfrak{R}\left(\Box_\mhat^{\mathbb{C}}(r^*\phi)\overline{D_\beta(r^*\phi)}\right) = \mathfrak{R}\left(\left(r^*\Box_\mhat^{\mathbb{C}}\phi - 2\nabla_\mhat^\alpha\tfrac{1}{r^*}D_\alpha(r^*\phi)- r^{*2}\phi\Box_\mhat\left(\tfrac{1}{r^*}\right) \right)\tfrac{\overline{D_\beta(r^*\phi)}}{r^*}\right).
\]
Combining these and taking the necessary cancellations gives us
\begin{equation}
\label{EMTDivPhi}
\nabla_\mhat^\alpha \widetilde{T}[\phi]_{\alpha\beta} = \tfrac{\nabla_\mhat^\alpha r^*}{r^{*3}}\mhat_{\alpha\beta}D_\gamma(r^*\phi)\overline{D_{\mhat}^\gamma(r^*\phi)} +\mathfrak{R}\left(\left(\Box_\mhat^{\mathbb{C}}\phi - r^{*}\phi\Box_\mhat\left(\tfrac{1}{r^*}\right)\right)\overline{\tfrac{D_\beta(r^*\phi)}{r^*}}\right) + \mathfrak{I}\left(\phi \tfrac{\overline{D_{\mhat}^{\alpha}(r^*\phi)}}{r^*}\right)\bF_{\alpha\beta}
\end{equation}
We now contract with $\okos$. Since $1/r^*$ is a fundamental solution of the wave operator with respect to $\mhat$ and $\okos(r^*)$ vanishes at 0, 
\begin{equation}
\mathfrak{R}\left( r^{*}\phi\Box_\mhat\left(\tfrac{1}{r^*}\right)\overline{\tfrac{D_\beta(r^*\phi)}{r^*}} \okos^\beta w\right) = 0.
\end{equation}
To bound the last term on the right of \eqref{EMTDivPhi}, we split $\bF = \bF^0  + \bF^1$. Then, $w \lesssim \tm w'$ in the support of $\bF^0$ so
\begin{equation}
\label{CurrentPhiCharge}
\lnorm\phi\tfrac{\overline{D_{\mhat}^\alpha(r^*\phi)}}{r^*}\bF^0_{\alpha\beta}\okos^\beta w \rnorm_1 \lesssim \lnorm\tp[s-1]\tz[1/2+\overline{\mu}]\phi (w')^{1/2}\rnorm_2\lnorm\tp[2-s]\tz[1/2-\overline{\mu}]\tfrac{D_{\mhat}^\alpha(r^*\phi)}{r^*}\bF^0_{\alpha\beta}\okos^\beta(w')^{1/2}\rnorm_2.
\end{equation}
The first term on the right is easily bounded by $\mathcal{E}[\phi]^{1/2}$. Applying \eqref{ChargeLInfty} and expanding in the null frame gives
\begin{equation}
\left|\tfrac{D_{\mhat}^\alpha(r^*\phi)}{r^*}\bF^0_{\alpha\beta}\okos^\beta\right| \lesssim |q|\left(\left|\tp[2s-2]\tfrac{D_{\Ls}(r^*\phi)}{r^*}\right|  + \left|\tm[2s]\tp[-2]\tfrac{D_{\uLs}(r^*\phi)}{r^*}\right|\right).
\end{equation}
The estimate $\tfrac12 + \overline{\mu} < s$ gives
\begin{equation}\label{EEstPhiF0}
\lnorm\phi\tfrac{\overline{D_{\mhat}^\alpha(r^*\phi)}}{r^*}\bF^0_{\alpha\beta}\okos^\beta w \rnorm_1 \lesssim |q|S_0[\phi](T).
\end{equation}
To handle the $\bF^1$ terms, we first take
\begin{equation}
\label{CurrentPhiRem}
\lnorm\phi\tfrac{\overline{D_{\mhat}^\alpha(r^*\phi)}}{r^*}\bF^1_{\alpha\beta}\okos^\beta w \rnorm_1 \lesssim \lnorm\tp[s-3/2-\overline{\mu}]\phi w^{1/2}\rnorm_2\lnorm\tp[3/2+\overline{\mu}-s]\tfrac{D_{\mhat}^\alpha(r^*\phi)}{r^*}\bF^1_{\alpha\beta}\okos^\beta w^{1/2}\rnorm_2.
\end{equation}
Then,
\begin{equation}
\left|\tfrac{D_{\mhat}^\alpha(r^*\phi)}{r^*}\bF^1_{\alpha\beta}\okos^\beta\right| \lesssim \lVert \bF^1\rVert^{red}_{L^{\infty}[w]}\left(\left|\tp[s-1]\tm[-1/2]\tfrac{D_{\Ls}(r^*\phi)}{r^*}\right| + \left|\tp[s-3/2]\tfrac{\slashed{D}(r^*\phi)}{r^*}\right|+ \left|\tp[-1-s]\tm[2s-1/2]\tfrac{D_{\uLs}(r^*\phi)}{r^*}\right|\right)
\end{equation}
Combining this with \eqref{CurrentPhiRem} and \eqref{EEstPhiF0}, as well as $\tfrac12 + \overline\mu < s$, gives
\begin{equation}\label{EEstPhiF}
\big\lVert\phi\tfrac{\overline{D_{\mhat}^\alpha(r^*\phi)}}{r^*}\bF_{\alpha\beta}\okos^\beta w \big\rVert_1 \lesssim (|q| + \lVert \bF^1\rVert^{red}_{L^{\infty}[w]})S_0[\phi](T).
\end{equation}
We recall the deformation tensor estimate \eqref{id:DefTenMink}
\begin{equation}\label{DTStatement}
\text{symm }\nabla_\mhat^\alpha \okos^\beta = \tfrac12 \tfrac{\uls^{2s} - \abus^{2s}}{r^*}\mhat^{\alpha\beta} + \tfrac14(\tfrac{\uls^{2s} - \abus^{2s}}{r^*}) - 2s(\uls^{2s-1} + \us\abus^{2s-2}))(\Ls^\alpha\uLs^\beta + \Ls^\beta\uLs^\alpha),
\end{equation}
This allows us to establish the estimate in Minkowski space:
\begin{Lemma}\label{Lem:FirstEEPhi}
Define $\mathcal{E}_0^+(U,T) = E_0[\phi](T) + S_0[\phi](T) +  C_0^+[\phi](U,T)$. 
Given a function $\phi$ of sufficient regularity and decay we have the bound
\begin{align}
\label{FirstEEPhi}
\mathcal{E}_0^+[\phi](T) &\lesssim  \mathcal{E}_0[\phi](0) + \left(|q| + \lVert \bF^1 \rVert^{red}_{L^{\infty}[w]}\right)\mathcal{E}_0[\phi](T) + \left|\int_{[0,T]\times \Sigma_t}\Box_\mhat^\mathbb{C}\phi \tfrac{\overline{D_{\okos}(r^*\phi)}}{r^*}w\right| + \\
&+ \left|\int_{C^U_{ext}}\Box_{\mhat}^\mathbb{C}\phi \tfrac{\overline{D_{\okos}(r^*\phi)}}{r^*}w\right|\nonumber
\end{align}
\end{Lemma}

\begin{proof}
This follows from an application of the divergence theorem over the regions $[0,T]\times\mathbb{R}^3$ and $C^U_{ext}$.

We have that
\begin{equation}\label{id:PhiDivergence}
-\nabla_\mhat^\alpha \left(\widetilde{T}[\phi]_{\alpha\beta}\okos^\beta \widetilde{w}\right) = - (\nabla_\mhat^{\alpha}\okos^\beta) \widetilde{T}_{\alpha\beta}\wt{w} - \okos^\beta\nabla_\mhat^\alpha\wt{T}_{\alpha\beta}\wt{w} - (\nabla_\mhat \wt{w})^\alpha \okos^\beta \wt{T}_{\alpha\beta}
\end{equation}
The boundary terms on regions like $[0,T]\times\mathbb{R}^3$ give $E_0$, expressed as \eqref{TSEnergyPhiE}, and the additional boundary term on the $C^U_{ext}$ integral gives $C_0$.

Symmetrizing the first term on the right hand side of \eqref{id:PhiDivergence}, applying \eqref{DTStatement} and \eqref{EMTDivPhi} along with positivity of $\widetilde{T}_{\Ls\uLs}$ and the identities
\[
\tfrac{\nabla_\mhat^\alpha r^*}{r^{*}}\okos^\beta\mhat_{\alpha\beta} = \tfrac12\tfrac{\uls^{2s} - \abus^{2s}}{r^*}, \qquad \text{tr}_{\mhat}\wt{T} = -\tfrac{1}{r^{*2}}D_\alpha(r^*\phi)D^\alpha(r^*\phi).
\]
gives
\begin{equation}
-\nabla_\mhat^\alpha \left(\widetilde{T}[\phi]_{\alpha\beta}\okos^\beta \widetilde{w}\right) \leq -\tfrac{1}{r^{*2}}\mathfrak{R}\left(\Box_\mhat^{\mathbb{C}}(r^*\phi)\overline{D_{\okos}(r^*\phi)}\right) - \tfrac{1}{r^{*2}}\mathfrak{I}\left(r^*\phi \overline{D_{\mhat}^{\alpha}(r^*\phi)}\right)\bF_{\alpha\beta}\okos^\beta- \nabla^\alpha_\mhat \wt{w} \okos^\beta \wt{T}_{\alpha\beta}.
\end{equation}
The first term appears in the right hand side of \eqref{FirstEEPhi}, and the second term can be bounded using \eqref{EEstPhiF}. To bound $S_0$, we take the null decomposition
\begin{equation}
\nabla^\alpha(\widetilde{w}) = \Ls^{\alpha}\mhat^{\Ls\uLs}\uLs(\widetilde{w}) + \uLs^\alpha \mhat^{\uLs\Ls}\Ls(\widetilde{w})
\end{equation}
The first term is equivalent to $\Ls^{\alpha}w'$, and the second term is equivalent to  
$\uLs^\alpha\tz[1+2\overline{\mu}]w'$. Expanding out $\wt{T}$ in the null decomposition and integrating gives an integral equivalent to $-S_0$ on the right (after applying the Hardy estimate \eqref{HardyEstimate}), which we may move over to the left hand side.
 \end{proof}
We now bound the perturbations on this energy estimate which arise from the metric. We first define the remainder momentum density tensor
\begin{equation}
\label{RTensorF}
P[\phi]_{rem}^\beta = \mathfrak{R}\Big(\tfrac{\overline{D_{\okos}(r^*\phi)}}{r^*} (g^{\beta\gamma} - \mhat^{\beta\gamma})D_\gamma\phi - \frac12 \okos^\beta (g^{\gamma\delta} - \mhat^{\gamma\delta})D_\gamma\phi\overline{D_\delta\phi}\Big)
\end{equation}
and the conical remainder energy
\begin{equation}
C^U_{err}[\phi](T)= \int_{C_{U,T}}|\partial_\beta(\us)P[\phi]_{rem}^\beta w|\, dV_C,
\end{equation}
\begin{Lemma}
\label{RemainderDivergence}
Given \eqref{MetLI} and a sufficiently regular function $\phi$, the following inequality holds for all $U$:
\begin{align}
\left|\int_{[0,T] \times \Sigma_t}\mathfrak{R}\left(\left(\Box_g^{\mathbb{C}}\phi - \Box_\mhat^{\mathbb{C}}\phi\right)\tfrac{\overline{D_{\okos}(r^*\phi)}}{r^*} w\right)\, dx \, dt\right| + & \\
\left|\int_{C^U_{ext}}\mathfrak{R}\left(\left(\Box_g^{\mathbb{C}}\phi - \Box_\mhat^{\mathbb{C}}\phi\right)\tfrac{\overline{D_{\okos}(r^*\phi)}}{r^*} w\right)\, dx \, dt\right|& \lesssim \varepsilon_g (E_0[\phi]+S_0[\phi])(1+|q|+\lnorm \bF \rnorm^{red}_{L^\infty[w]}) + C^U_{err}[\phi](T).\nonumber
\end{align}
\end{Lemma}
\begin{proof}
Defining the reduced derivative and wave operator
\begin{equation}
\wt{D}_\alpha = \partial_\alpha + iA_\alpha, \qquad\wt\Box^{\mathbb{C}}_\mhat\phi  = \mhat^{\alpha\beta}\wt{D}_\alpha\wt{D}_\beta\phi,
\end{equation}
we have
\begin{equation}
\Box^{\mathbb{C}}_g\phi  - \Box^{\mathbb{C}}_\mhat\phi = (\Box^{\mathbb{C}}_g\phi  - \wt\Box^{\mathbb{C}}_\mhat\phi) + (\wt\Box^{\mathbb{C}}_\mhat\phi  - \Box^{\mathbb{C}}_\mhat\phi)
\end{equation}
We will use the harmonic coordinate condition to bound the first term on the right hand side. The second term can be written in terms of the rapidly decaying Christoffel symbols of $\mhat$ with respect to the background Minkowski metric. By the inequalities \eqref{MetLI}, \eqref{est:ABounds} and \eqref{STEA}, and \eqref{STEE}, we have the pointwise bound
\[
|\mhat^{ab}\wt{D}_a\wt{D}_b\phi - \Box_{\mhat}^{\mathbb{C}}\phi| \lesssim |\partial A^{\alpha}_ a||\partial\phi|
\]
and consequently
\begin{align}
\Big\lVert\left(\mhat^{ab}\wt{D}_a\wt{D}_b\phi - \Box_{\mhat}^{\mathbb{C}}\phi\right)\tfrac{\overline{D_{\okos}(r^*\phi)}}{r^*} w\Big\rVert_1 &\lesssim\varepsilon_g \Big\lVert\tp[s-3/2]\tm[s]|\partial\phi|w^{1/2}\Big\rVert_2\Big\lVert\tp[-s]\tm[-1/2-\overline\mu]\tfrac{\overline{D_{\okos}(r^*\phi)}}{r^*} w^{1/2}\Big\rVert_2 \\
&\lesssim \varepsilon_g(E_0[\phi]+S_0[\phi])\nonumber
\end{align}

Next we want to bound
\begin{equation}
\label{BoxRemainder}
\int_0^T\intsig{t}(g^{\alpha\beta} - \mhat^{\alpha\beta})\wt{D}_\alpha\wt{D}_\beta\phi\tfrac{\overline{D_{\okos}(r^*\phi)}}{r^*}w \, dx \, dt
\end{equation}
in magnitude. Again, we consider $r > \frac{t}{2}$, as proving the results in the far interior is easier. By the divergence theorem,
\begin{equation}\label{DTRemainder}
\int_0^T\intsig{t}\partial_\beta\left(P[\phi]_{rem}^\beta\widetilde{w}\right) \, dx \, dt = \intsig{T}P[\phi]_{rem}^0 \widetilde{w}\, dx - \intsig{0}P[\phi]_{rem}^0 \widetilde{w} \, dx
\end{equation}
We seek to isolate \eqref{BoxRemainder} and bound all other terms using the energy.
First we bound energies of the form
\begin{equation}\label{PhiFixedError}
\Big|\intsig{t}P[\phi]_{rem}^0 \widetilde{w}\Big|\, dx\, dt \lesssim \Big\lVert\tfrac{\overline{D_{\okos}(r^*\phi)}}{r^*}\tp[-s]w^{1/2}\Big\rVert_2 \Big\lVert\varepsilon_g \tp[s-1+\delta] |\wt{D}\phi|w^{1/2}\Big\rVert_2\lesssim \varepsilon_g\mcE_0[\phi](t).
\end{equation}
 When the derivative falls on $\phi$ or $D\phi$, we get
\begin{equation}\label{PhiNiceTerm1}
\mathfrak{R}\left(\tfrac{\overline{D_{\okos}(r^*\phi)}}{r^*}(g-\mhat)^{\beta\gamma}\wt{D}_\beta \wt{D}_\gamma\phi\right) + \tfrac{\okos(r^*)}{r^*}(g-\mhat)^{\beta\gamma}\wt{D}_\gamma\phi\overline{\wt{D}_\beta\phi} + \mathfrak{R}\left(\bF_{\beta \okos}\phi (g-\mhat)^{\beta\gamma}\overline{\wt{D}_\gamma\phi}\right).
\end{equation}
The first term appears in \eqref{BoxRemainder}, and the second and third terms in the corresponding energy integral can be bounded using \eqref{STEF}, \eqref{MetricApprox2}, and (a virtually identical argument to) \eqref{EEstPhiF}, so in particular
\begin{equation}\label{PhiNiceTermErrors}
\Big\lVert\Big(\Big|\tfrac{\okos(r^*)}{r^*}(g-\mhat)^{\beta\gamma}\wt{D}_\gamma\phi\overline{\wt{D}_\beta\phi}\Big| + \Big|\bF_{\beta \okos}\phi (g-\mhat)^{\beta\gamma}\overline{\wt{D}_\gamma\phi}\Big|\Big)w\Big\rVert_1 \lesssim \varepsilon_g\mcE_0[\phi](t)
\end{equation}
When the derivative falls on metric terms of \eqref{RTensorF}, a null decomposition gives
\begin{subequations}
\begin{align*}
\left|(\partial_\beta (g-\mhat)^{\beta\gamma})D_\gamma\phi\right| &\lesssim \varepsilon_g\left(\tp[-1-\gamma+\delta]\tm[\gamma-1]|D\phi| + \tp[-1+\delta]\tm[-1]|\overline{D}\phi|\right), \\
\left|\okos((g-\mhat)^{\gamma\delta})D_\gamma\phi\overline{D_\delta\phi}\right| &\lesssim \varepsilon_g\left(\tp[2s-2-\gamma+\delta]\tm[\gamma]|D\phi|^2 + \tp[2s-2+\delta]|D\phi||\overline{D}\phi|\right),
\end{align*}
\end{subequations}
and consequently, using \eqref{STEE}, as well as $2s-1-\gamma < 0$, $\gamma < s$,
\begin{subequations}
\begin{align*}
\Big\lVert\tfrac{\overline{D_{\okos}(r^*\phi)}}{r^*} (\partial_\beta(g-\mhat)^{\beta\gamma})D_\gamma\phi w\Big\rVert_1 &\lesssim \varepsilon_g\mcE_0^{1/2}\left(\Big\lVert\tp[s-1-\gamma+\delta]\tm[\gamma-1/2+\overline\mu]|D\phi|w^{1/2}\Big\rVert_2 + \Big\lVert\tp[s-1+\delta]\tm[-1/2+\overline{\mu}]|\overline{D}\phi|w^{1/2}\Big\rVert_2\right), \\
\Big\lVert\okos((g-\mhat)^{\gamma\delta})D_\gamma\phi\overline{D_\delta\phi}w\Big\rVert_1&\lesssim \varepsilon_g \Big(\mathcal{E}_0 + \Big\lVert\tm[-s]\tp[s-1/2-\overline{\mu}]|\overline{D}\phi| w^{1/2}\Big\rVert_2 \Big\lVert\tm[s]\tp[-1/2-\delta]|D\phi|w^{1/2}\Big\rVert_2\Big).
\end{align*}
\end{subequations}
Therefore,
\begin{equation}\label{DerivMetricErrorPhi}
\Big\lVert\tfrac{\overline{D_{\okos}(r^*\phi)}}{r^*} (\partial_\beta(g-\mhat)^{\beta\gamma})D_\gamma\phi w\Big\rVert_1 + \Big\lVert\okos((g-\mhat)^{\gamma\delta})D_\gamma\phi\overline{D_\delta\phi}w\Big\rVert_1 \lesssim \varepsilon_g\mathcal{E}_0[\phi].
\end{equation}
When the derivative falls on $\tfrac{1}{r^*}$ or components of $\okos$, we have the terms
\begin{equation}\label{Dokos}
\mathfrak{R}\left((g-\mhat)^{\beta\gamma}\partial_\beta\left(\tfrac{\okos(r^*)}{r^*}\right)\overline{\phi}D_\gamma\phi - \frac12(\partial_\beta\okos^\beta)(g-\mhat)^{\gamma\delta}D_\gamma\phi\overline{D_\delta\phi} + (\partial_\beta\okos^\alpha)\overline{D_\alpha\phi}(g-\mhat)^{\alpha\beta}D_\beta\phi\right).
\end{equation}
In each a null decomposition along with \eqref{MetricApprox2} and \eqref{MetLI} give
\begin{subequations}
\begin{align}
\left\lVert(g-\mhat)^{\beta\gamma}\partial_\beta\left(\tfrac{\okos(r^*)}{r^*}\right)\overline{\phi}D_\gamma\phi w\right\rVert_1 &\lesssim \varepsilon_g\lnorm\tp[2s-3+\delta]|\phi||D\phi| w\rnorm_1 \\
&\lesssim \varepsilon_g \lnorm\tp[s-3/2-\delta]|\phi| w^{1/2}\rnorm_2\lnorm\tp[s-3/2+2\delta]|D\phi|w^{1/2}\rnorm_2,\nonumber \\
\label{ErrorEstPhi}\lnorm(\partial_\beta\okos^\beta)H^{\gamma\delta}D_\gamma\phi\overline{D_\delta\phi} w \rnorm_1 &\lesssim \varepsilon_g\left(\lnorm\tp[2s-2+\delta]|D\phi||\overline{D}\phi|w\rnorm_1 + \lnorm\tp[2s-2-\gamma+\delta]\tm[\gamma]|D\phi|^2w\rnorm_1\right) \\
\lVert\partial_\beta\okos^\alpha)\overline{D_\alpha\phi}(g-\mhat)^{\alpha\beta}D_\beta\phi w\rVert_1&\lesssim \varepsilon_g\left(\lnorm\tp[2s-2+\delta]|D\phi||\overline{D}\phi|w\rnorm_1 + \lnorm\tp[2s-2-\gamma+\delta]\tm[\gamma]|D\phi|^2w\rnorm_1\right).
\end{align}
\end{subequations}
Therefore,
\begin{equation}\label{ErrorEstKPhi}
\big\lVert\big(\big|(g-\mhat)^{\beta\gamma}\partial_\beta\big(\tfrac{\okos(r^*)}{r^*}\big)\overline{\phi}D_\gamma\phi\big| \!+\!  |(\partial_\beta\okos^\beta)(g-\mhat)^{\gamma\delta}D_\gamma\phi\overline{D_\delta\phi}| \!+\! |(\partial_\beta\okos^\gamma)\overline{D_\gamma\phi}(g-\mhat)^{\alpha\beta}D_\beta\phi\big)w\big\rVert_1 \lesssim \varepsilon_g\mathcal{E}_0[\phi]
\end{equation}
When the derivative falls on $\widetilde{w}$, we use the pointwise estimates
\begin{subequations}\label{ErrorEstwPhi}
\begin{align}
|\okos(\wt{w})(g-\mhat)^{\alpha\beta} D_\beta\phi\overline{D_\alpha\phi}| & \lesssim \varepsilon_g(\tp[2s-1+\delta]\tz[1+2\overline\mu] |D\phi||\overline{D}\phi| + \tp[2s-1-\gamma+\delta]\tz[1+2\overline\mu]\tm[\gamma] |D\phi|^2)w', \\
\Big|\tfrac{D_{\okos}(r^*\phi)}{r^*}(g-\mhat)^{\alpha\beta}\overline{D_\alpha\phi}\partial_\beta w\Big| &\lesssim \varepsilon_g\Big|\tp[-s]\tfrac{D_{\okos}(r^*\phi)}{r^*}(w')^{1/2}\Big|(\tp[s-1+\delta]\tz[\gamma]|D\phi|(w')^{1/2} + \tp[s-1+\delta]|\overline{D}\phi|(w')^{1/2})
\end{align}
\end{subequations}
Therefore,
\begin{equation}
\Big\lVert |\okos(\wt{w})(g-\mhat)^{\alpha\beta} D_\beta\phi\overline{D_\alpha\phi}| + \Big|\tfrac{D_{\okos}(r^*\phi)}{r^*}(g-\mhat)^{\alpha\beta}\overline{D_\alpha\phi}\partial_\beta w\Big|\Big\rVert_1 \lesssim \varepsilon_g\mathcal{E}_0[\phi].
\end{equation}
Combining \eqref{DTRemainder}, \eqref{PhiFixedError}, \eqref{DerivMetricErrorPhi}, \eqref{ErrorEstKPhi}, \eqref{ErrorEstwPhi}, \eqref{PhiNiceTerm1}, and \eqref{PhiNiceTermErrors} gives our result.
\end{proof}
Finally, Lemmas \ref{Lem:FirstEEPhi} and \ref{RemainderDivergence} together give Theorem \ref{L2PhiMain}.

\section{\texorpdfstring{$L^\infty$ Estimates for $\bF$}{L Infinity Estimates for F}}
	We now establish $L^\infty$ estimates on Lie derivatives of the charge-modified field $\bF^1$. These for the most part are identical to estimates in Minkowski space (cf. \cite{LS}), modulo our modified frame, as $d\wtx \sim dx$. Our primary tools are Lemma \ref{Sobolev}, which we combine with commutator estimates to convert from derivatives to Lie derivatives. First, we take an estimate which holds for bad components in the extended exterior.
\begin{Lemma}
\label{WorstFLInf}
We have the following uniform estimate on all components of $\bG$:
\begin{equation}
\tp\tm[s+1/2]|\chi\bG|w^{1/2}\lesssim E_2^{1/2}[\bG]
\end{equation}
\end{Lemma}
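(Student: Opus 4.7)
The plan is to obtain this pointwise bound from a weighted Klainerman--Sobolev inequality (Lemma~\ref{Sobolev}) applied componentwise to the modified null decomposition of $\widetilde F$, then to translate the resulting $L^2$ expressions into the iterated energy $E_2[\widetilde F]$ using the commutator corollary of Section~2.

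First I would apply Lemma~\ref{Sobolev} to each scalar null component $f\in\{\alpha_j,\rho,\sigma,\ual_j\}$ of $\widetilde F$ separately, producing a pointwise bound of the schematic form
\[
\tp\,\tm[1/2]\,|f(t,x)|\,w^{1/2}\lesssim\sum_{|I|\le 2,\,X\in\mathbb{L}}\Big(\int_{\Sigma_t}|X^I f|^2\,w\,dx\Big)^{1/2}.
\]
I would then invoke the Corollary following Lemma~\ref{NullComms} to rewrite each $X^I f$ as a linear combination of null components of $\Lie{X}^I\widetilde F$ with coefficients bounded by powers of $\tz$. Since the cutoff $\chi$ restricts attention to $r^*\ge t/2+1/2$, where $\tz\le 1$, these $\tz$-factors introduce no loss; consequently the right-hand side is dominated by standard $L^2$-norms of null components of $\Lie{X}^I\widetilde F$, $|I|\le 2$, taken with weight $w$.

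Comparing these with the definition of $E_0[\Lie{X}^I\widetilde F]$, the components $\alpha,\rho,\sigma$ appear with weight $\tp[2s]w$ while $\ual$ carries only the weaker weight $\tm[2s]w$. Since $\tm\le\tp$, the $\ual$ weight is the weakest and therefore dictates the overall estimate: pulling a factor $\tm[-s]$ out of the right-hand side in this case produces the extra $\tm[s]$ one sees on the left of the claim, yielding $\tp\tm[s+1/2]|\widetilde F|w^{1/2}\lesssim E_2^{1/2}[\widetilde F]$ in the worst case. The good components $\alpha,\rho,\sigma$ automatically satisfy stronger bounds, gaining an extra factor of $\tz[s]$ over what is claimed.

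The step requiring the most care is the commutator bookkeeping needed to iterate the Corollary up to two derivatives and to commute any weights cleanly through $X^I$ without picking up spurious polynomial factors. Both issues are handled uniformly by Lemma~\ref{PsiProperties}, which shows that the relevant scalar coefficients lie in the classes $\Psi^{K,N}_u$ whose $\tz$-exponents go the right way on the support of $\chi$. I therefore expect the proof to consist primarily of careful assembly of ingredients already in place, with the sharp decay rate governed by the $\ual$ component as described above.
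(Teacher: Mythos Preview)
Your overall strategy is the same as the paper's, but there is a genuine gap in how you obtain the extra factor $\tm[s]$ on the left. As written, you apply the Sobolev inequality \eqref{TSSob} with $\delta_-=0$, obtaining
\[
\tp\,\tm[1/2]\,|f|\,w^{1/2}\ \lesssim\ \sum_{|I|\le 2}\lnorm X^I f\,w^{1/2}\rnorm_{L^2(\Sigma_t)},
\]
and then propose to ``pull a factor $\tm[-s]$ out of the right-hand side'' to manufacture $\tm[s]$ on the left. This does not work: the left-hand side is a pointwise quantity at a fixed $(t,x)$ with a fixed value of $\tm$, while the right-hand side is an integral over all of $\Sigma_t$, where $\tm$ ranges over $[1,\infty)$. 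Bounding $\int|\ual|^2w$ by $\int\tm[2s]|\ual|^2w\le E_0$ only yields the weaker estimate $\tp\tm[1/2]|f|w^{1/2}\lesssim E_2^{1/2}$, missing exactly the factor $\tm[s]$ you need.

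The fix is precisely what the paper does: invoke \eqref{TSSob} with $\delta_-=s$ from the outset, so that the weight $\tm[s]$ appears symmetrically on both sides. The right-hand side then reads $\sum\lnorm\tm[s]X^I(\chi f)\,w^{1/2}\rnorm_{L^2}$, which matches the $\ual$-weight in $E_0$ exactly and is dominated by the $\tp[s]$-weight for the good components since $\tm\le\tp$. The reason this version of \eqref{TSSob} is legitimate is that its proof proceeds through a dyadic decomposition in $\tm$, on each piece of which $\tm$ is essentially constant; this is what permits the weight to sit on both sides. A secondary, minor difference: the paper applies Sobolev to the coordinate components $\widetilde F(\dxsi{\alpha},\dxsi{\beta})$ rather than the null components, which slightly streamlines the commutator bookkeeping via \eqref{FieldComms} and \eqref{LieTwoForm}, though your route through the null components and Lemma~\ref{NullComms} also works once the weight issue is corrected.
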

\begin{proof}
This follows from \eqref{TSSob} with $\delta_+ = 0$ and $\delta_- = s$, noting
\[
|\tm\drs\phi| \lesssim |\Ss\phi| + |\Os{0r}\phi| + |\drs\phi|.
\]
then expanding each of these with respect to the Lorentz fields. We have in particular the estimate
\begin{equation}
\lnorm\tp\tm[1/2+s]\chi |\bG(\dxsi{\alpha}, \dxsi{\beta})|w^{1/2}\rnorm_{L^\infty(\mathbb{R}^3)} \lesssim \sum_{\substack{|I| \leq 2}}\lnorm\tm[s]\wX^I (\chi \bG(\dxsi{\alpha}, \dxsi{\beta}))w^{1/2}\rnorm_{L^2(\mathbb{R}^3)}.
\end{equation}
Then, noting $\wX^I\chi \lesssim 1$, \eqref{FieldComms} combined with \eqref{LieTwoForm} gives our result.
\end{proof}
\begin{Lemma}\label{est:FInteriorLI}
For all components of $\bG$, we have the following estimate:
\begin{equation}
\tp[s + 3/2]|(1-\chi)\bG|w^{1/2} \lesssim E_2^{1/2}[\bG].
\end{equation}
\end{Lemma}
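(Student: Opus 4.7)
The plan is to mirror the proof of Lemma \ref{WorstFLInf}, but exploit the fact that $1-\chi$ is supported in the region $r \leq 3t/4$, where $\tp \approx \tm$. In that region the extra factor of $\tm[1/2]$ appearing on the left of the previous lemma can simply be absorbed into a factor of $\tp[1/2]$, yielding the improved rate $\tp[s+3/2]$ in place of $\tp\tm[s+1/2]$. Equivalently, the distinction between components made by the peeling weights $\tp[2s]$ versus $\tm[2s]$ in the energy $E_2[\widetilde{F}]$ is trivial in the interior, so all six null components of $\widetilde{F}$ obey the same uniform bound.

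First I would apply the time-slice Sobolev inequality \eqref{TSSob} with weights chosen so that on the left we get $\tp[s+3/2]$ and on the right we get $\tp[s]$; concretely, in the interior this corresponds to taking $\delta_+ = s + 3/2$ and $\delta_- = 0$, giving the pointwise estimate
\[
\tp[s+3/2] \bigl| (1-\chi)\widetilde{F}(\dxsi{\alpha}, \dxsi{\beta}) \bigr| w^{1/2} \lesssim \sum_{\substack{|I| \leq 2 \\ X \in \mbL}} \lnorm \tp[s]\, X^I \bigl( (1-\chi)\widetilde{F}(\dxsi{\alpha}, \dxsi{\beta}) \bigr)\, w^{1/2} \rnorm_{L^2(\mathbb{R}^3)}.
\]
Since $1-\chi$ is supported in the interior, the weight $w$ is comparable to $1$ on the support and $w^{1/2}$ may effectively be dropped; it is retained only so that the right-hand side fits verbatim into the definition of $E_2[\widetilde{F}]$.

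Next I would commute the Lorentz fields through the coordinate-frame projections using the Lie-derivative identity \eqref{LieTwoForm} together with the commutation relations \eqref{FieldComms}. Derivatives of $(1-\chi)$ satisfy $|\partial^I(1-\chi)|\lesssim \tp[-|I|]$ and are again supported where $\tp \approx \tm$, so Leibniz terms produce no new obstruction. The resulting $L^2$ norms on $\tp[s] X^I \widetilde{F}(\dxsi{\alpha}, \dxsi{\beta})$ are bounded by $E_2^{1/2}[\widetilde{F}]$: in the interior both the $\tp[2s]$-weighted and the $\tm[2s]$-weighted components of the energy are equivalent to $\tp[2s]|\widetilde{F}|^2$, so every null component contributes with the same weight.

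The main obstacle is purely bookkeeping: keeping track of the commutators between Lorentz fields and the coordinate frame $\dxsi{\alpha}$ as we unwind $X^I \widetilde{F}(\dxsi{\alpha}, \dxsi{\beta})$ into $(\Lie{X}^J \widetilde{F})(\dxsi{\gamma}, \dxsi{\delta})$. However, these are the same commutators used in Lemma \ref{WorstFLInf}, and because we are in the interior we do not need to keep track of any cancellations between null components — a far simpler situation than in the exterior estimate — so the result follows.
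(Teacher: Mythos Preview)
Your approach is essentially the same as the paper's: apply an interior Sobolev inequality to pick up the factor $\tp[3/2]$ from scaling, then use $\tp\approx\tm$ on the support of $1-\chi$ so that the $\tp[2s]$- and $\tm[2s]$-weighted parts of $E_2[\widetilde F]$ are equivalent, and commute the Lorentz fields through the frame exactly as in Lemma~\ref{WorstFLInf}.

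There is, however, a citation and parameter slip. The Sobolev inequality you want is \eqref{ISob}, the interior estimate for $(1-\chi)\phi$, not \eqref{TSSob}, which is the exterior estimate and carries the cutoff $\chi$ (its underlying dyadic decomposition is in $\us$, valid only for $r^*>t/2$). With \eqref{ISob} the correct choice is $\delta_+=s$, which gives exactly the displayed inequality
\[
\tp[s+3/2]\,|(1-\chi)\widetilde F(\dxsi{\alpha},\dxsi{\beta})|\,w^{1/2}\;\lesssim\;\sum_{|I|\le 2}\lnorm \tp[s]\,Z^I\bigl((1-\chi)\widetilde F(\dxsi{\alpha},\dxsi{\beta})\bigr)w^{1/2}\rnorm_{L^2(\mathbb{R}^3)}.
\]
Your stated choice $\delta_+=s+3/2$, $\delta_-=0$ matches neither \eqref{TSSob} nor \eqref{ISob}. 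Once the reference is fixed the rest of your argument goes through unchanged and coincides with the paper's one-line proof.
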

The proof is similar, using \eqref{ISob} and noting that $\tm \approx \tp$ in the support of $(1-\chi)$.
\begin{Lemma}
For the nice components $\alpha, \rho, \sigma$ of $\overline{\bG}$ we have the estimate
\begin{equation}
\tp[s+1]\tm[1/2]|\overline{\bG}|w^{1/2}\lesssim E_2^{1/2}[\bG].
\end{equation}
\end{Lemma}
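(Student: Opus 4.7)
The plan is to prove this by reducing to the extended exterior region $r^* > t/2$ and then applying a weighted Klainerman--Sobolev estimate that exploits the favorable commutator structure enjoyed by the nice null components, as encoded in the corollary to Lemma \ref{NullComms}. Away from the extended exterior, where $\tm \approx \tp$, the previous lemma already gives the stronger bound $\tp[s+3/2]|\widetilde{F}|w^{1/2}\lesssim E_2^{1/2}[\widetilde{F}]$, which dominates the claimed estimate; so only the exterior region requires new argument.

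In the exterior I would apply an estimate of the type \eqref{TSSob} to each of the scalar components $\alpha_i[\widetilde{F}]$, $\rho[\widetilde{F}]$, $\sigma[\widetilde{F}]$, but with $\delta_+ = s$ and $\delta_- = 0$, so that the weight on the left-hand side becomes $\tp[s+1]\tm[1/2]$ (mirroring Lemma \ref{WorstFLInf}, which used $\delta_+=0$, $\delta_-=s$ to get $\tp\tm[s+1/2]$). This produces a bound of the schematic shape
\begin{equation*}
\tp[s+1]\tm[1/2]\,|\widetilde{\slashed{F}}|\,w^{1/2}
\;\lesssim\;
\sum_{\substack{|I|\leq 2 \\ X\in\mathbb{L}}}
\bigl\lVert \tp[s]\,X^I(\widetilde{\slashed{F}})\,w^{1/2}\bigr\rVert_{L^2(\mathbb{R}^3)},
\end{equation*}
where $X^I$ acts on the relevant scalar component of $\widetilde{F}$.

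Next I would invoke the corollary to Lemma \ref{NullComms} to rewrite $X^I(\alpha_i[\widetilde{F}])$, $X^I(\rho[\widetilde{F}])$, $X^I(\sigma[\widetilde{F}])$ as sums of null components of $\Lie{Y}^J\widetilde{F}$ with $|J|\leq |I|$, in which the \emph{worst} component $\ual[\Lie{Y}^J\widetilde{F}]$ is always multiplied by a positive power of $\tz = \tm/\tp$: a factor $\tz[2]$ when expanding $X^I(\alpha)$, and $\tz$ when expanding $X^I(\rho)$ or $X^I(\sigma)$. The good components $\alpha,\rho,\sigma$ of $\Lie{Y}^J\widetilde{F}$ appear only with coefficients that are uniformly bounded (times elements of $\Psi_u^{0,*}$), and thus their $L^2$-norms with weight $\tp[s]$ are controlled by $E_2^{1/2}[\widetilde{F}]$ directly.

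The weight matching for the $\ual$ contribution is the key check, and I expect this to be the only point requiring real care. Using $\tp[s]\cdot\tz[k] = \tp[s-k]\tm[k]$, together with $s<1\leq k$ and $\tm\leq\tp$, one has $\tp[s-k]\tm[k] \leq \tm[s]$, so these error terms are absorbed into the $\tm[2s]|\ual|^2$ part of the energy $E_0[\Lie{Y}^J\widetilde{F}]$. Summing over $|J|\leq 2$ and $Y\in\mathbb{L}$ yields $E_2^{1/2}[\widetilde{F}]$ on the right-hand side, completing the argument. The main obstacle is therefore a bookkeeping one: checking that each term produced by the commutator expansion lands in the correct weight class so as to be controlled by the appropriate entry of $E_2[\widetilde{F}]$, and ensuring the weighted Sobolev inequality is applied consistently on both sides of the spatial cutoff $r^*\sim t/2$.
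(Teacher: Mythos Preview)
Your proposal is correct and follows essentially the same approach as the paper: apply the time-slice Sobolev estimate \eqref{TSSob} with $\delta_+=s$, $\delta_-=0$, then use Lemma \ref{NullComms} (and its corollary) to commute the Lorentz fields through the null decomposition, exploiting the $\tz$-gain on $\ual$ terms to match weights. Your weight-matching check and treatment of the interior via the previous lemma are more explicit than the paper's two-line proof, but the underlying argument is identical.
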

\begin{proof}
By \eqref{est:FInteriorLI} it suffices to show this for $\chi\overline{\bG}$. We use Lemma \ref{NullComms}. We show this for $\sigma$, as the proof for the other components is identical.
\begin{equation}
\tp[s+1]\tm[1/2]|\chi\sigma[\bG]|w^{1/2}\lesssim \sum_{\substack{|I| \leq 2}}\lnorm \tp[s] \wX^I(\chi \sigma[\bG]) w^{1/2}\rnorm_{L^2(\mathbb{R}^3)}.
\end{equation}
Repeated application of Lemma \ref{NullComms} gives the desired estimate.
\end{proof}
We now look at the $L^2(L^\infty)$ estimate for $\alpha$.
\begin{Lemma}\label{lem:alphamixednorm}
Given a form $\bG$ with sufficient decay, we have the bound
\begin{equation}
\lnorm\tp[s+1]\tm[1/2]\alpha[\bG](w')^{1/2}\rnorm_{L^2(L^\infty)} \lesssim S_2[\bG](T).
\end{equation}
\end{Lemma}
\begin{proof}
We as usual take our Sobolev estimate in the extended exterior
\begin{equation}
\lnorm\tp[s+1]\tm[1/2]\chi\alpha[\bG](w')^{1/2}\rnorm^2_{L^\infty(x)} \lesssim  \sum_{\substack{|I| \leq 2}}\lnorm\tp[s]\wX^I(\chi\alpha[\bG])(w')^{1/2}\rnorm^2_{L^2(x)}.
\end{equation}
We can use the usual expansion in terms of Lie derivatives, followed by the commutator estimate \ref{NullComms}, and it follows that this is contained in $S_2[\bG](T)$. We combine this with the interior estimate
\begin{equation}
\lnorm \tp[3/2+s]|\bG_{\alpha\beta}|(w')^{1/2}\rnorm_{L^\infty(x)}^2\lesssim \sum_{|I| \leq 2}\lnorm\tp[s]\wX^I((1-\chi)\bG_{\alpha\beta})(w')^{1/2}\rnorm_{L^2(x)}^2
\end{equation}
coming from \eqref{ISob}. Integrating in time gives our result.
\end{proof}
We can combine these estimates as follows:
\begin{Lemma}
For any two-form $\bG$ with zero charge and sufficient decay, the following estimate holds on $[0,T]$:
\begin{equation}
\label{LInfFPart1}
\tp\tm[s+1/2]|\ual[\bG]| + \tp[s+1]\tm[1/2](|\alpha[\bG]| + |\rho[\bG]|+|\sigma[\bG]|) + \lnorm\tp[s+1]\tm[1/2]\alpha[\bG](w')^{1/2}\rnorm_{L^2(t)L^\infty(x)} \lesssim \mathcal{E}_2[\bG](T).
\end{equation} 
\end{Lemma}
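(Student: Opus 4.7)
The plan is to assemble the estimate by patching the four preceding sublemmas across the extended exterior $\{r^* > t/2\}$ (where $\chi \equiv 1$) and the far interior $\{r^* < t/2\}$ (where $\tm \approx \tp$ and $1-\chi \equiv 1$ on the relevant portion). No new machinery is required; the task is essentially bookkeeping of weights across the transition region.

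First, for the pointwise pieces. For the $\ual$ contribution, Lemma \ref{WorstFLInf} gives $\tp\tm[s+1/2]|\ual[\widetilde{F}]|w^{1/2} \lesssim E_2^{1/2}[\widetilde{F}]$ on the support of $\chi$, while in the far interior the inequality $\tp\tm[s+1/2] \lesssim \tp[s+3/2]$ combined with the interior Sobolev bound $\tp[s+3/2]|(1-\chi)\widetilde{F}|w^{1/2} \lesssim E_2^{1/2}[\widetilde{F}]$ yields the same estimate. For the nice components $\alpha, \rho, \sigma$, the third preceding lemma gives the exterior bound directly, and in the interior $\tp[s+1]\tm[1/2] \lesssim \tp[s+3/2]$ again reduces us to the interior lemma applied componentwise in the null frame. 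Summing these contributions produces the pointwise parts of \eqref{LInfFPart1}.

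For the $L^2(t)L^\infty(x)$ norm of $\alpha$, the previous lemma already controls the exterior contribution by $S_2[\widetilde{F}](T)^{1/2}$. The interior contribution follows from applying the interior Sobolev estimate \eqref{ISob} to $(1-\chi)\alpha[\widetilde{F}]$ at fixed $t$, expanding Lie derivatives through the null frame via Lemma \ref{NullComms} exactly as in the previous Sobolev lemmas, then squaring and integrating in $t$; since $w' \lesssim 1$ in the far interior, the resulting spacetime quantity is majorized by $S_2[\widetilde{F}](T)^{1/2}$. Combining exterior and interior gives the full $L^2(t)L^\infty(x)$ bound on $\alpha$.

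Collecting all contributions yields a bound by $\mathcal{E}_2^{1/2}[\widetilde{F}](T)$, which under the standing smallness convention $\mathcal{E}_2 \leq 1$ implies (equivalently, suffices for the applications of) the stated bound. There is essentially no obstacle here beyond carefully matching the weights $w^{1/2}$ and $(w')^{1/2}$ in the two regions, for which the comparisons recorded in \eqref{tildew} and the observation $w, w' \lesssim 1$ near $r^* \approx t/2$ suffice. The only step that requires anything beyond direct invocation is the $L^2(t)L^\infty(x)$ norm, since the interior piece is not stated explicitly in an earlier lemma and must be produced by a fresh application of \eqref{ISob}.
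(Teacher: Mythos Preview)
Your proposal is correct and matches the paper's approach: the paper offers no separate proof for this lemma, writing only ``We can combine these estimates as follows'' before stating it, so the content is exactly the patching of the four preceding sublemmas across the exterior/interior split that you describe. One minor correction: the interior contribution to the $L^2(t)L^\infty(x)$ bound on $\alpha$ is already contained in the proof of the immediately preceding lemma (the paper explicitly invokes \eqref{ISob} there to ``get the full inequality''), so no fresh application is needed; your observation about $\mathcal{E}_2^{1/2}$ versus $\mathcal{E}_2$ on the right-hand side is apt, and the smallness convention resolves it as you note.
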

We will use this to bound components of $\bF^1$ and its Lie derivatives.

Before we proceed, we mention one auxiliary estimate which will give us more precise bounds for the component $\alpha$. We recall the conical energy
\[
C_0[\bG](T) = \sup_{\us}\int_{\{C(\us)\} \cap \{t \in [0,T]\} }\left(\tfrac12 \tp[2s]T(\nabla u^*, \Ls) + \tfrac12\tm[2s]T[\bG](\nabla u^*, \uLs)\right)w \, dVC(\us).
\]
Then, \eqref{QFest} combined with the null decomposition of $g$ gives
\begin{subequations}
\begin{align}
\left|T[\bG](\nabla u^*, \Ls)-|\alpha|^2\right| &\lesssim \varepsilon_g|\al|^2 + \varepsilon_g\tp[-1-\gamma+\delta]\tm[\gamma]|\overline{\bG}|^2 + \varepsilon_g^2\tm[2\gamma]\tp[-2-2\gamma+2\delta]|\bG|^2, \\
\left|T[\bG](\nabla u^*, \uLs)-(|\rho|^2+|\sigma|^2)\right| &\lesssim \varepsilon_g|\overline{\bG}|^2 + \varepsilon_g\tm[\gamma]\tp[-1-\gamma+\delta]|\bG|^2.
\end{align}
\end{subequations}
Therefore, for sufficiently small $\varepsilon_g$, and for $C$ independent of $\varepsilon_g$, the inequality $2s + \delta < 1+\gamma$ gives
\begin{equation}
\label{LCDecomp}
\frac12\tp[2s]T[\bG](\nabla u^*, \Ls) + \frac12\tm[2s]T[\bG](\nabla u^*, \uLs) \geq 1/2(\tp[2s]|\alpha|^2 + \tm[2s](|\rho|^2+|\sigma|^2)) - C\varepsilon_g\tm[2s+\gamma]\tp[-1-\gamma+\delta]|\bG|^2.
\end{equation}
We may bound the last term by
\begin{equation}
\varepsilon_g\tm[2s+\gamma]\tp[-1-\gamma+\delta]|\bG|^2 \lesssim \varepsilon_g \tm[\gamma-1]\tp[-3-\gamma+\delta]\mathcal{E}_2[\bG](T).
\end{equation}
It follows that
\begin{equation}\label{LCDecomp2}
\sup_{\us}\int_{\{C(\us)\} \cap \{t \in [0,T]\} }\varepsilon_g\tm[2s+\gamma]\tp[-1-\gamma+\delta]|\bG|^2 \lesssim \varepsilon_g\mathcal{E}_2[\bG](T).
\end{equation}
The estimate \eqref{C0star} follows. In order to further bound nice components we take the estimate
\begin{equation}
\lnorm\tp[3/2+s]\alpha[\bG] w\rnorm_{L^\infty(C_{\us})} \lesssim  \sum_{\substack{|I|, |J| \leq 1\\ \wX \in\{\uls\Ls, \mathbb{O}\}, \wY \in \mathbb{O}}}\lnorm\tp[2s] \left(\alpha[\Lie{\wX}^I\Lie{\wY}^J \bG] + \tp[2s]\tz\left(\rho[\Lie{\wX}^I\Lie{\wY}^J \bG] + \sigma[\Lie{\wX}^I\Lie{\wY}^J \bG]\right)\right)w\rnorm_{L^2(C_{u^*})}.
\end{equation}
Applying equation \eqref{LCDecomp2} gives us the estimate
\begin{equation}
|\tp[3/2+s]\alpha[\bG] w^{1/2}| \lesssim \mathcal{E}^{1/2}_4[\bG](T).
\end{equation}
We can combine these to get the following:
\begin{Theorem}
\label{LInftyF}
For any two-form $\bG$ with zero associated charge and sufficient decay, the following estimates hold:
\begin{subequations}
\label{est:FTLinfbasic}
\begin{align}
\label{est:FTLinf1}\tp\tm[s+1/2]|\ual[\bG]|w + \tp[s+1]\tm[1/2](|\alpha[\bG]| + |\rho[\bG]|+|\sigma[\bG]|)w + \lnorm\tp[s+1]\tm[1/2]\alpha[\bG](w')^{1/2}\rnorm_{L^2(L^\infty)} &\lesssim \mathcal{E}^{1/2}_2[\bG](T), \\
\label{est:FTLinf2}|\tp[3/2+s]\alpha[\bG] w^{1/2}| &\lesssim \mathcal{E}^{1/2}_4[\bG](T).
\end{align}
\end{subequations}
We have the $L^\infty$ estimates on derivatives:
\begin{subequations}
\label{est:FTLinfty}
\begin{align}
\tp\tm[s+1/2]|\ual[\Lie{\wX}^I\bG]| + \tp[s+1]\tm[1/2](|\rho[\Lie{\wX}^I\bG]|+|\sigma[\Lie{\wX}^I\bG]|) + |\tp[3/2+s]\alpha[\Lie{\wX}^I\bG] w^{1/2}| &\lesssim \mathcal{E}^{1/2}_{|I|+4}[\bG](T) \\
\lnorm\tp[s+1]\tm[1/2]\alpha[\Lie{\wX}^I\bG](w')^{1/2}\rnorm_{L^2(L^\infty)} &\lesssim \mathcal{E}^{1/2}_{|I|+2}[\bG](T) 
\end{align}
\end{subequations}
\end{Theorem}
\section{\texorpdfstring{$L^\infty$ Estimates for $\bphi$}{L Infinity Estimates for Phi}}
	We now establish analogous $L^\infty$ estimates on $\phi$. Before doing so, we show some auxiliary bounds on quantities relating to $\bF$, which will also be useful later.
For an arbitrary two-form $\bG$ we can expand $\wX$ in our null frame to get the pointwise bounds
\begin{subequations}\label{est:FDecomp}
\begin{align}
|\bG(\wX, \Ls)| &\lesssim \tp|\alpha[\bG]| + \tm|\rho[\bG]|, \\
|\bG(\wX, \Bs{i})| &\lesssim \tp(|\alpha[\bG]|+|\sigma[\bG]|) + \tm|\ual[\bG]|, \\
|\bG(\wX, \uLs)| &\lesssim \tp(|\rho[\bG]|+|\ual[\bG]|).
\end{align}
\end{subequations}
The identity \eqref{Comm1} gives
\begin{equation}
|(\Lie{\wX}^ID\phi)_a|\lesssim |\wt{D}_a D_{\wX}^I\phi| + \sum_{|I_1|+|I_2|+1\leq |I|}|D_{\wX}^{I_1}\phi||(\Lie{\wX}^{I_2}\bF)(\wpa_a, \wX_1)|.
\end{equation}
We now show $L^2$ and $L^\infty$ spacetime norms on components of $\Lie{\wX}^ID\phi$, as these crop up naturally in the current norm. We hope to be able to approximate them in the same way as $|DD_X^I\phi|$. We first split $\bF = \bF^0+\bF^1$. If $k-4$ or fewer derivatives fall on $\bF^1$, or if any number of derivatives fall on $\bF^0$, \eqref{ChargeLInfty} and \eqref{est:FTLinfbasic} give
\begin{subequations}
\begin{align*}
\sum_{|I|\leq k}|\alpha[\Lie{\wX}^I\bF^0]| + \sum_{|J|\leq k-4}|\alpha[\Lie{\wX}^J\bF^1]| &\lesssim \mcE_k(T)^{1/2}\tp[-3/2-s]\lrangle{(r^*-t)_+}^{s-1/2}\\
\sum_{|I|\leq k}(|\rho[\Lie{\wX}^I\bF^0]| + |\sigma[\Lie{\wX}^I\bF^0]|) + \sum_{|J|\leq k-4}(|\rho[\Lie{\wX}^J\bF^1]| + |\sigma[\Lie{\wX}^J\bF^1]|) &\lesssim \mcE_k(T)^{1/2}\tp[-1-s]\tm[-1/2]\lrangle{(r^*-t)_+}^{s-1/2}\\
\sum_{|I|\leq k}|\ual[\Lie{\wX}^I\bF^0]| + \sum_{|J|\leq k-4}|\ual[\Lie{\wX}^I\bF^1]| &\lesssim \mcE_k(T)^{1/2}\tp[-1]\tm[-1/2-s]\lrangle{(r^*-t)_+}^{s-1/2}
\end{align*}
\end{subequations}
This combined with \eqref{est:FDecomp} gives
\begin{subequations}
\label{est:FLorentz}
\begin{align}
\sum_{|I|\leq k-1}|(\Lie{\wX}^I\bF^0)_{{\wX}_1\Ls}| + \sum_{|J|\leq k-5}|(\Lie{\wX}^J\bF^1)_{{\wX}_1\Ls}| &\lesssim \mcE_k(T)^{1/2}\tp[-1/2-s]\lrangle{(r^*-t)_+}^{s-1/2}\\
\sum_{|I|\leq k-1}|(\Lie{\wX}^I\bF^0)_{{\wX}_1\Bs{i}}| + \sum_{|J|\leq k-5}(\Lie{\wX}^J\bF^1)_{{\wX}_1\Bs{i}}| &\lesssim \mcE_k(T)^{1/2}\tp[-s]\tm[-1/2]\lrangle{(r^*-t)_+}^{s-1/2}\\
\sum_{|I|\leq k-1}|(\Lie{\wX}^I\bF^0)_{{\wX}_1\uLs}| + \sum_{|J|\leq k-5}|(\Lie{\wX}^J\bF^1)_{{\wX}_1\uLs}| &\lesssim \mcE_k(T)^{1/2}\tm[-1/2-s]\lrangle{(r^*-t)_+}^{s-1/2}
\end{align}
\end{subequations}
We now establish our estimates on $\phi$, which follow from Theorem \ref{Sobolev}, with consideration for commutators, and will be conducted in the $\{\wt{\partial}_a\}$ frame. We first have
\begin{equation}
\tp[2]\tm[2s+1]|D\phi|^2w + \tp[2s+2]\tm\sum_{|I|\leq 1}\left|\tfrac{D_{\wX}\phi}{\tp}\right|^2w
\lesssim \sum_{|I|\leq 2}\lnorm\tm[s]D_{\wX}^I D\phi w^{1/2}\rnorm^2_{L^2(x)} + \sum_{|I|\leq 3}\lnorm\tp[s-1]D_{\wX}^I\phi w^{1/2}\rnorm^2_{L^2(x)}.
\end{equation}
Expanding $\wX$ in our null decomposition allows us to bound the second norm on the right by $E_2[\phi](T)$, so we only need to deal with the first term. We reduce this to the commutator, noting
\[
\sum_{|I|\leq 2 }\lnorm\tm[s]DD_{\wX}^I\phi w^{1/2}\rnorm_{L^2(x)} \lesssim (E_2[\phi](T))^{1/2}.
\] 
\label{D2Dphi}We look at the case $|I| = 2$, as other cases are easier. First, we have the identity
\begin{equation}
[D_{\wX_1}D_{\wX_2},  \Dxsi{a}]\phi = D_{\wX_1}(i\bF(\wX_2, \dxsi{a})\phi)+ i\bF(\wX_1, \dxsi{a})D_{\wX_2}\phi+ D_{[\wX_1, \dxsi{a}]}D_{\wX_2}\phi  + D_{\wX_1}D_{[\wX_2, \wpa_a]}\phi .
\end{equation}
The third and fourth terms are bounded by the energy, and a reduction of order, respectively. We show the bound for the first term, as the second term follows from an easier argument. The identity \eqref{LieTwoForm} gives
\begin{equation}
\label{DFphi}
D_{{\wX}_1}(i\bF({\wX}_2, \dxsi{\alpha})\phi) = i\bF({\wX}_2, \dxsi{\alpha})D_{{\wX}_1}\phi + i(\Lie{{\wX}_1}\bF)({\wX}_2, \dxsi{\alpha})\phi + i\bF([{\wX}_1, {\wX}_2], \dxsi{\alpha})\phi + i\bF({\wX}_2, [{\wX}_1, \dxsi{\alpha}])\phi.
\end{equation}
Therefore,
\begin{equation}
\sum\lnorm \tm[s]D_{\wX_1}(i\bF(\wX_2, \dxsi{\alpha})\phi)w^{1/2}\rnorm_{L^2(x)}\lesssim \sum_{|I|\leq 1}|\tm\Lie{\wX_1}\bF(\wX_2, \dxsi{a})|\sum_{|I|\leq 1 }\lVert\tm[s-1]D_{\wY}\phi w^{1/2}\rVert_{L^2(x)}.
\end{equation}
An application of Lemma \ref{PoincareMain} with $p = 2s-2$, $q = 0$ gives us
\begin{equation}
\lnorm\tm[s-1]D_{\wY}\phi w^{1/2}\rnorm_{L^2(x)} \lesssim (E_1[\phi](T))^{1/2}.
\end{equation}
Therefore, 
\begin{equation}
\sum_{|I|\leq 2}\lnorm\tm[s]D_{\wX}^I D\phi w^{1/2}\rnorm^2_{L^2(x)} + \sum_{|I|\leq 3}\lnorm\tm[s]\tp[-1]D_{\wX}^I\phi w^{1/2}\rnorm^2_{L^2(x)}  \lesssim \Big(1+\tm\sum_{|I|\leq 1}| \Lie{\wX}\bF|\Big)^2E_2[\phi](T).
\end{equation}
We have our first Lemma:
\begin{Lemma}\label{Lemma:FirstLInftyPhi}
For a function $\phi$ defined on $[0,T] \times \Sigma_t$ with suitable regularity,
\begin{equation}
\tp\tm[s+1/2]|D\phi|w^{1/2} + \tp[s+1]\tm[1/2]\sum_{|I|\leq 1 }\left|\tfrac{D_{\wX}\phi}{\tp}\right|w^{1/2}\lesssim \Big(1+\tm\sum_{|I|\leq 1}| \Lie{\wX}\bF|\Big)(E_2[\phi](T))^{1/2}.
\end{equation}
\end{Lemma}
This estimate will be used to bound $D_\uLs\phi$ for $r^* \geq t/2$, and all derivatives when $r^* \leq t/2$, and will be a base for later $L^\infty$ estimates. We now take a look at our better derivatives. We will prove this for a general vector field $U$, noting that the estimate will not necessarily be finite unless $U\in\{\Ls, \Bs{1}, \Bs{2}\}$. Again, Theorem \ref{Sobolev} gives
\begin{equation}
\lnorm\tp[2+2s]\tm[1]|D_{U}\phi|^2w \rnorm_{L^\infty(x)} \lesssim \sum_{|I|\leq 2}\lnorm\tp[s]D_{\wX}^I D_{U}\phi w^{1/2}\rnorm^2_{L^2(x)}.
\end{equation}
We recall the identities
\begin{equation}
\label{D2DSphi}
[D_{\wX_1}D_{\wX_2},  D_{U}]\phi = D_{\wX_1}(i\bF(\wX_2, {U})\phi)+ D_{\wX_1}D_{[\wX_2, {U}]}\phi + i\bF(\wX_1, {U})D_{\wX_2}\phi + D_{[\wX_1, U]}D_{\wX_2}\phi,
\end{equation}
for which a straightforward argument (using \eqref{FieldComms} and \eqref{FrameComms}) again allows us to reduce to bounding the first term, and
\begin{equation}\label{CommDFPhi}
D_{\wX_1}(i\bF(\wX_2, U)\phi) = i\bF(\wX_2, {U})D_{\wX_1}\phi + i(\Lie{\wX_1}\bF)(\wX_2, {U})\phi + i\bF([\wX_1, \wX_2], {U})\phi + i\bF(\wX_2, [\wX_1, {U}])\phi.
\end{equation}
For $U\in\{\Ls, \Bs{1}, \Bs{2}\}$, the commutator identities \eqref{FieldComms} and \eqref{FrameComms} and a null decomposition give
\begin{equation}
|D_{\wX_1}(i\bF(\wX_2, U)\phi)| \lesssim \sum_{|I|\leq 1 }\left(\tp(|\al[\Lie{\wX}\bF]| + |\rho[\Lie{\wX}\bF]| + |\sigma[\Lie{\wX}\bF]|) + \tm|\ual[\Lie{\wX}\bF]|\right)\sum_{|I|\leq 1}|D_{\wY}\phi|
\end{equation}
Recalling the definition \eqref{def:Fred}, we have the inequality
\begin{equation}
\tp[s]\tm[1-s]\left(\tp(|\al[\Lie{\wX}\bF]| + |\rho[\Lie{\wX}\bF]| + |\sigma[\Lie{\wX}\bF]| + \tm|\ual[\Lie{\wX}\bF]|\right)\lesssim |q|+\lnorm \Lie{\wX}\bF \rnorm^{red}_{L^\infty[w]}.
\end{equation}
It follows that
\begin{equation}
\sum_{|I|\leq 2 }\lnorm\tp[s]D_{\wX_1}(i\bF(\wX_2, D_{U})\phi w^{1/2}\rnorm^2_{L^2(x)}\lesssim \Big(1+\sum_{|I|\leq 1}\lnorm \Lie{\wX}\bF \rnorm^{red}_{L^\infty[w]}\Big)^2\sum_{|I|\leq 1 }\lnorm\tm[s-1]D_{\wY}\phi w^{1/2}\rnorm^2.
\end{equation}
Combining this with Lemma \ref{PoincareMain}, gives us our second estimate:
\begin{Lemma}\label{lem:PhiOkay}
For a function $\phi$ defined on $[0,T] \times \Sigma_t$ with suitable regularity, and for $U\in\{\Ls, \Bs{1}, \Bs{2}\}$,
\begin{equation}
\tp[s+1]\tm[1/2]|D_{U}\phi|w^{1/2}\lesssim \Big(1+|q|+\sum_{|I|\leq 1}\lnorm \Lie{\wX}\bF \rnorm^{red}_{L^\infty[w]}\Big)(E_2[\phi](T))^{1/2}.
\end{equation}
\end{Lemma}
We can now take a nicer estimate on $\phi$. Our time-slice Sobolev estimate, \eqref{TSDSob}, gives us
\begin{equation}
\tp\tm[s-1/2]|\phi|w^{1/2} \lesssim \sum_{|I|\leq 1 }\lnorm \tm[s-1]D_{\wX}^I\phi w^{1/2}\rnorm_{L^2(x)}.
\end{equation}
Lemma \ref{PoincareMain} gives the following estimate:
\begin{Lemma}\label{lem:PhiBadComp}
For a function $\phi$ defined on $[0,T] \times \Sigma_t$ with suitable regularity,
\begin{equation}
\tp\tm[s-1/2]|\phi|w^{1/2} \lesssim E_2^{1/2}[\phi](T).
\end{equation}
\end{Lemma}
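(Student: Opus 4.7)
The plan is to combine the time-slice Sobolev-type inequality \eqref{TSDSob} displayed immediately before the lemma with the weighted Poincare-type inequality of Lemma \ref{PoincareMain}, following exactly the pattern already used for the two preceding lemmas in this section. Starting from
\[
\tp\tm[s-1/2]|\phi|w^{1/2} \lesssim \sum_{\substack{|I|\leq 1 \\ X \in \mathbb{L}}}\lnorm \tm[s-1]D_X^I\phi w^{1/2}\rnorm_{L^2(x)},
\]
the problem reduces to bounding two spatial $L^2$ norms on each time slice: one on $\phi$ itself, and one on $D_X\phi$ with $X\in\mathbb{L}$.

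For the $|I|=0$ term, I apply Lemma \ref{PoincareMain} to exchange the weight $\tm[s-1]$ on $\phi$ for a weight of the form $\tm[s]$ on a covariant derivative $D_Y\phi$, which is the same trade used to obtain the spacetime estimate \eqref{STEB} and the earlier $(E_1[\phi])^{1/2}$ bound recalled in this section. The resulting weighted $L^2$ quantity is directly contained in the time-slice energy $E_0[\phi](T)$, so its square root is bounded by $E_2[\phi](T)^{1/2}$. For the $|I|=1$ term, I apply the same inequality with $D_X\phi$ in place of $\phi$; this reduces $\lnorm \tm[s-1]D_X\phi w^{1/2}\rnorm_{L^2(x)}$ to a weighted $L^2$ norm of $D(D_X\phi)$, which belongs to $E_0[D_X\phi](T)$ and hence to $E_2[\phi](T)$. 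Crucially, no commutator between $D$ and $D_X$ needs to be unpacked, because the energy $E_2[\phi]$ is defined as the sum of $E_0[D_X^I\phi]$ for $|I|\leq 2$, so all covariant derivatives of the $D_X^I\phi$ along the frame are already controlled on the right-hand side; this is why the statement of the lemma has no dependence on $\lnorm F\rnorm_{L^\infty[w]}$, in contrast to the earlier two lemmas of the section.

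The only point that needs checking, and the one that I expect to be the mildest obstacle, is that the hypotheses of Lemma \ref{PoincareMain} are verified for the weights $\tm[s-1]w$: the cutoff and decay structure of $w$ together with the decay of $\phi$ at spatial infinity ensure no far-field boundary term, while the $|\phi/r^*|^2$ contribution already built into $E_0[\phi]$ (via the Hardy-type equivalence in Lemma \ref{HardyEstimate}) handles the behavior near the origin. Combining these three ingredients yields the estimate as stated.
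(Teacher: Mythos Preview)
Your proposal is correct and follows exactly the paper's approach: apply the time-slice Sobolev estimate \eqref{TSDSob} with $\delta_+=0$, $\delta_-=s-1$, and then invoke Lemma~\ref{PoincareMain} (with $p=2s-2$, $q=0$, so the hypotheses $p>-1$ and $|q|<p+1$ reduce to $s>1/2$) on each $D_X^I\phi$ to land in $E_0[D_X^I\phi]\leq E_2[\phi]$. Your observation that no $[D,D_X]$ commutator needs to be unpacked---and hence no $\lnorm F\rnorm_{L^\infty[w]}$ factor appears---is exactly the point; the only minor slip is the reference to \eqref{STEB}, which is a spacetime bound obtained by time integration rather than a direct application of Lemma~\ref{PoincareMain}, but the previously displayed bound $\lnorm\tm[s-1]D_Y\phi\,w^{1/2}\rnorm_{L^2(x)}\lesssim E_1[\phi]^{1/2}$ is the correct antecedent.
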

We now take an $L^2(t)L^\infty(x)$ estimate for the nice component $r^{*-1}D_{\Ls}(r^*\phi)$ in the extended exterior region $r^* > t/2$. We can rewrite Theorem \ref{Sobolev} and integrate to get
\begin{equation}
\label{ASobEst}
\lnorm\tp[s+1]\tm[1/2] \tfrac{D_{\Ls}(r^*\phi)}{r^*}(w')^{1/2}\rnorm^2_{L^2(t)L^\infty(x)} \lesssim \sum_{\substack{|I|, |J| \leq 1 \\ \wt{V}\in \{\drs, \Ss, \Os{0r^*}, \Os{ij}\}\wt{U} \in \{\Os{ij}\}}}\lnorm\tp[s]D_{\wt{V}}^ID_{\wt{U}}^J\Big(\tfrac{D_{\Ls}(r^*\phi)}{r^*}\Big)(w')^{1/2}\rnorm^2_2,
\end{equation}
where we recall the bound
\[
|\tm\drs\phi| \lesssim |\drs\phi| + |\Os{0r^*}\phi| + |\Ss\phi|.
\]
For a vector field $\wt{W}$, we have with the commutator relation
\begin{equation}
\left[D_{\wt{W}}, \frac{1}{r^*}D_\Ls(r^*\cdot)\right]\phi = D_{[\wt{W}, \Ls]}\phi + i\bF(\wt{W}, \Ls)\phi + \wt{W}\left(\tfrac{1}{r^*}\right)\phi
\end{equation}
from which it follows that, for $\wt{V}$ as in \eqref{ASobEst},
\begin{align}
\left[D_{{\wt{V}}}D_{{\wt{U}}}, \tfrac{1}{r^*}D_\Ls(r^*\cdot)\right]\phi &= D_{{\wt{V}}}\left(D_{[{\wt{U}}, \Ls]}\phi + i\bF({\wt{U}}, \Ls)\phi + {\wt{U}}\left(\tfrac{1}{r^*}\right)\phi\right) + \\
&\qquad + D_{[{\wt{V}}, \Ls]}D_{{\wt{U}}}\phi + i(\bF({\wt{V}}, \Ls)D_{{\wt{U}}}\phi) + {\wt{V}}\left(\tfrac{1}{r^*}\right)D_{{\wt{U}}}\phi. \nonumber
\end{align}
Recalling the identities \eqref{FrameComms}, we may simplify
\begin{equation}
\label{ACommFull}
\left[D_{\wt{V}}D_{\Os{ij}}, \tfrac{1}{r^*}D_\Ls(r^*\cdot)\right]\phi = iD_{\wt{V}}(\bF(\Os{ij}, \Ls)\phi) + D_{[\wt{V}, \Ls]}D_{\Os{ij}}\phi +i\bF(\wt{V}, \Ls)D_{\Os{ij}}\phi + \wt{V}\Big(\tfrac{1}{r^*}\Big)D_{\Os{ij}}\phi.
\end{equation}
We bound everything in groups. First, setting $\wt{V} = \Os{0r^*}$, expanding in terms of $\Os{0i}$ (using the linearity of the Lie derivative), and taking \eqref{CommDFPhi}, \eqref{FieldComms} and \eqref{FrameComms}, gives
\begin{equation}
|D_{\wt{V}}(\bF(\Os{ij}, \Ls)\phi)|\lesssim \sum_{|I|+|J|\leq 1}\tp\alpha[\Lie{{\wX}}^I\bF]|D_{\wY}^J\phi|
\end{equation}
Additionally, a null decomposition gives
\begin{equation}
|\bF(\wt{V}, \Ls)|\lesssim \tm|\rho|.
\end{equation}
For all $\wt{V}$, we use \eqref{FieldComms} and \eqref{FrameComms}, and the null decomposition \eqref{RadBoost}, to get
\begin{equation}
D_{[\wt{V}, \Ls]}D_{\Os{ij}}\phi= C_{\wt{V}} D_{\Ls}D_{\Os{ij}}\phi, 
\end{equation}
where $C_{\wt{V}} = -1$ for $\wt{V} = \Ss$ or $\Os{0r^*}$ and 0 for all other fields. Consequently, in the domain of $\chi$ we have
\begin{equation}
\Big|\wt{V}\Big(\tfrac{1}{r^*}\Big) - \tfrac{C_{\wt{V}}}{r^*}\Big|\lesssim \tfrac{\tm}{r^{*2}}, \qquad \Big|D_{[\wt{V}, \Ls]}D_{\Os{ij}}\phi + \wt{V}\Big(\tfrac{1}{r^*}\Big)D_{\Os{ij}}\phi -\tfrac{C_{\wt{V}}\Ls(r^*\phi)}
{r^*}\Big|\lesssim \tfrac{\tm}{r^{*2}}\end{equation}
Therefore, we have the pointwise bounds
\begin{align}
\label{PhiComms}
\sum_{\substack{|I|, |J| \leq 1 \\ \wt{V}\in \{\drs, \Ss, \Os{0r^*}, \Os{ij}\}\wt{U} \in \{\Os{ij}\}}}\Big|\tp[s]D_{\wt{V}}^ID_{\wt{U}}^J\Big(\tfrac{D_{\Ls}(r^*\phi)}{r^*}\Big)\Big| &\lesssim \sum_{|I| \leq 2}\left|\tp[s]\tfrac{D_\Ls(r^*D_{\wX}^I\phi)}{r^*}\right| +  \sum_{|I| \leq 1}\tp[s]\tz\left|\tfrac{D_{\wX}^I\phi}{r^*}\right| +\\
+\sum_{|I| + |J|\leq 1} &\tp[s+1]|\alpha[\Lie{\wX}^I\bF]||D_{\wY}^J\phi| + \sum_{|I| + |J|\leq 1 } \tp[s]\tm|\rho[\Lie{\wX}^I\bF]||D_{\wY}^J\phi| \nonumber.
\end{align}
It suffices to show that all terms appearing on the right hand side of \eqref{PhiComms} can be bounded by the energy when inserted into \eqref{ASobEst}. The first two terms on the right hand side appear almost immediately in $S_2[\phi]$, using the estimate $\tz \leq\tz[1/2+\overline{\mu}]$ for the second term.

In order to bound $\tp|\alpha(\Lie{\wX}\bF)||D_{\wY}\phi|$ the bounds \eqref{ChargeLInfty} give us
\begin{subequations}\label{Est:NiceFieldError1}
\begin{align}
\lnorm\tp[s+1]|\alpha(\Lie{X}{\bF^1})||D_Y\phi|(w')^{1/2}\rnorm_2 &\lesssim |\tp D_Y\phi|\lnorm\tp[s]\alpha[\Lie{X}\bF^1](w')^{1/2}\rnorm_2\lesssim \mathcal{E}_3[\phi](T)^{1/2}\mathcal{E}_1[\bF^1](T)^{1/2},  \\
\lnorm\tp[s+1]|\alpha[\Lie{\wX}{\bF^0}]||D_{\wY}\phi|(w')^{1/2}\rnorm_2 &\lesssim |q|\lnorm\tz\tp[s]\Big|\tfrac{D_{\wY}\phi}{r^*}\Big|(w')^{1/2}\rnorm_2 \lesssim |q|\mathcal{E}_2[\phi](T)^{1/2}
\end{align}
\end{subequations}
Similarly,
\begin{subequations}\label{Est:NiceFieldError2}
\begin{align}
\lnorm\tp[s]\tm|\rho(\Lie{\wX}\bF^1)||D_{\wY}\phi|(w')^{1/2}\rnorm_2 &\lesssim |\tp D_{\wY}\phi|\lnorm\tp[s]\tz\rho(\Lie{\wX}\bF^1)(w')^{1/2}\rnorm_2\lesssim \mathcal{E}_3[\phi](T)^{1/2}\mathcal{E}_1[\bF^1](T)^{1/2}, \\
\lnorm\tp[s]\tm|\rho(\Lie{\wX}\bF^0)||D_{\wY}\phi|(w')^{1/2}\rnorm_2 &\lesssim |q|\lnorm\tz\tp[s]\left|\tfrac{D_{\wY}\phi}{r^*}\right|(w')^{1/2}\rnorm_2\lesssim |q|\mathcal{E}_2[\phi](T)^{1/2}.
\end{align}
\end{subequations}
Therefore,
\begin{equation}
\lnorm\tp[s+1]\tm[1/2] \tfrac{D_{\Ls}(r^*\phi)}{r^*}(w')^{1/2}\rnorm_{L^2(t)L^\infty(x)} \lesssim (1+|q|+\mathcal{E}_1[\bF^1](T)^{1/2})\mathcal{E}_3[\phi](T)^{1/2}.
\end{equation}

We consider the far interior, where derivatives satisfy the same bounds. By \eqref{IDSob}
\[
\lnorm\tp[s+1-\overline\mu]|D\phi|\rnorm^2_{L^2(t)L^\infty(x)\{r > t/2\}} \lesssim  \sum_{|I| \leq 2}\lnorm\tp[s-1/2-\overline\mu]D_{\wX}^ID\phi\rnorm^2_{L^2(t)L^2(x)\{r < 3t/4\}}.
\]
We commute through and bound commutators as in the proof of Lemma \ref{Lemma:FirstLInftyPhi} to get
\begin{equation}
\lnorm\tp[s+1-\overline\mu]|D\phi|\rnorm_{L^2(t)L^\infty(x)\{r > t/2\}} \lesssim \Big(1+\sum_{|I|\leq 1}\lnorm \Lie{\wX}\bF \rnorm^{red}_{L^\infty[w]}\Big)(\mathcal{E}_2[\phi](T))^{1/2}.
\end{equation}
We can combine these to get our main results:
\begin{Lemma}
For a suitably regular function $\phi$, we have the estimate
\begin{equation}
\Big\lVert \tp[s+1]\tm[1/2] \Big(\tfrac{D_\Ls(r^*\phi)}{r^*}\chi_{\{r^* \geq t/2\}} + \Ls(\phi)\chi_{\{r^* \leq t/2\}}\Big)(w')^{1/2}\Big\rVert_{L^2(t)L^\infty(x)} \lesssim \Big(1+\sum_{|I|\leq 1}\lnorm \Lie{\wX}\bF \rnorm^{red}_{L^\infty[w]}\Big)^{1/2}(\mathcal{E}_3[\phi](T)).
\end{equation}
\end{Lemma}
Finally, we turn our attention to the pure $L^\infty$ bound on the nice terms, for which we will use the following approach. Recalling Lemmas \ref{Lem:FirstEEPhi} and \ref{RemainderDivergence}, we will first show the bound
\begin{equation}\label{ConeFirstBound}
C_0^+[\phi](U,T) \lesssim C_0[\phi](T) + \varepsilon_g\Big(1+\sum_{|I|\leq 1}\lnorm \Lie{\wX}\bF \rnorm^{red}_{L^\infty[w]}\Big)^2(E_2[\phi](T)),
\end{equation}
for some uniform constant then apply\eqref{LCDSob}, bounding the commutator terms. By the reverse triangle inequality $C_0^+[\phi](U,T) \geq C_0[\phi](T) - C_{err}^U[\phi](T)$, it suffices to show
\begin{equation}\label{ConeSecondBound}
C^U_{err}[\phi](T)\lesssim \varepsilon_gC_0^+[\phi](U,T) + \varepsilon_g \Big(1+\sum_{|I|\leq 1}\lnorm \Lie{\wX}\bF\rnorm^{red}_{L^\infty[w]}\Big)^2(E_2[\phi](T)).
\end{equation}
This follows first from the estimates
\begin{align*}
|(g^{\alpha\beta}-\mhat^{\alpha\beta})\partial_\alpha\us D_\beta \phi| &\lesssim \varepsilon_g(\tp[-1+\delta]|\overline{D}\phi| + \tz[\gamma]\tp[-1+\delta]|D\phi|),\\
|\okos(\us) (g^{\gamma\delta} - \mhat^{\gamma\delta})D_\gamma\phi\overline{D_\delta\phi}|&\lesssim \varepsilon\tp[-1+\delta]\tm[2s-1](\tz[\gamma]|D\phi|^2 + |D\phi||\overline{D}\phi|)
\end{align*}
Hölder's inequality gives
\begin{equation}
\int_{C_{U,T}}\Big|\tfrac{\tp[2s]\Ls(r^*\phi)}{r^*} \Big| |(g^{\beta\gamma} - \mhat^{\beta\gamma})D_\gamma\phi\partial_\beta(\us)| w\, dV_C\lesssim \varepsilon_g (C_0^+)^{1/2}\Big(\int_{C_{U,T}}\tp[2s]|(g^{\beta\gamma} - \mhat^{\beta\gamma})D_\gamma\phi\partial_\beta(\us)|^2 w\, dV_C\Big)^{1/2}
\end{equation}
Then, Lemmas \ref{lem:PhiBadComp} and \ref{lem:PhiOkay} allow us to bound the right by 
\[
 \Big(1+\sum_{|I|\leq 1}\lnorm \Lie{\wX}\bF \rnorm^{red}_{L^\infty[w]}\Big)(E_2[\phi](T))^{1/2}.
\]
We start with the conical estimate
\begin{equation}
\lnorm\tp[3/2+s]\chi\tfrac{D_{\Ls}(r^*\phi)}{r^*}w^{1/2}\rnorm^2_{L^\infty(C_{U,T})} \lesssim \sum_{\substack{|I|, |J| \leq 1 \\ \wt{V}\in \{\drs, \Ss, \Os{0r^*}, \Os{ij}\}\wt{U} \in \{\Os{ij}\}}} \lnorm\tp[s]D_{\wt{V}}^ID_{\wt{U}}^J\left(\chi\tfrac{D_{\Ls}(r^*\phi)}{r^*}\right)w^{1/2}\rnorm^2_{L^2(C_{U,T})}.
\end{equation}
We can move $\chi$ outside the derivatives on the right hand side with no issue, as when derivatives fall on it we get a uniformly bounded quantity. We again use the bound \eqref{PhiComms} and bound the right hand side term-by-term. The first term on the right appears directly in the energy. For the second term, Lemma \ref{lem:PhiBadComp} gives
\begin{equation}
\tp[s]\tz\chi\left|\tfrac{D_X^I\phi}{r^*}\right|w^{1/2} \lesssim \tp[s-3]\tm[3/2-s]\mathcal{E}_3[\phi]^{1/2}.
\end{equation}
and directly integrate the corresponding $L^2$ integral. The arguments for the third and fourth terms follow nearly exactly those for the bounds \eqref{Est:NiceFieldError1}, \eqref{Est:NiceFieldError2}, with the caveat that we use Lemma \ref{lem:PhiBadComp} to bound the $L^2$ norms containing $\phi$ by $\mcE_3[\phi]$.
Therefore, we can say
\begin{equation}
\left|\tp[3/2+s]\left|\chi\tfrac{D_{\Ls}(r^*\phi)}{r^*}\right|w^{1/2}\right|\lesssim \Big(1+\sum_{|I|\leq 1}\lnorm \Lie{\wX}\bF \rnorm^{red}_{L^\infty[w]}\Big)(\mathcal{E}_3[\phi](T) + C_0^+[\phi](T)) \lesssim \Big(1+\sum_{|I|\leq 1}\lnorm \Lie{\wX}\bF \rnorm^{red}_{L^\infty[w]}\Big)^2\mathcal{E}_4[\phi(T)
\end{equation}
\begin{Proposition}
For any function $\phi$ with sufficient decay, we have the bound
\begin{equation}
\left|\tp[3/2+s]\left|\chi\tfrac{D_{\Ls}(r^*\phi)}{r^*}\right|w^{1/2}\right| \lesssim \Big(1+\sum_{|I|\leq 1}\lnorm \Lie{\wX}\bF \rnorm^{red}_{L^\infty[w]}\Big)^2\mathcal{E}_4^{1/2}[\phi].
\end{equation}
\end{Proposition}
Combining this with Theorem \eqref{est:FTLinfty} and the bounds \eqref{ChargeLInfty}, we have
\begin{Theorem}
\label{LinftyPhi}
Given a function $\phi$, and a two-form $\bF$ as in \eqref{MKG}, we have the estimates 
\begin{subequations}\label{est:LinftyDPhi}
\begin{align}
\left(\tp\tm[s-1/2]|\phi| + \tp\tm[1/2+s]|D_{\uLs}\phi| + \tp[s+1]\tm[1/2](|\overline{D}\phi|)\right) w^{1/2} &\lesssim \mathcal{E}_2^{1/2}[\phi](T)\Big(1+|q[\bF]| + \mcE_4^{1/2}[\bF^1]\Big),\\
\Big\lVert \tp[s+1]\tm[1/2] \Big(\chi\tfrac{D_\Ls(r^*\phi)}{r^*} + (1-\chi)D_{\Ls}(\phi)\Big)(w')^{1/2}\Big\rVert_{L^2(L^\infty)} &\lesssim \mathcal{E}^{1/2}_3[\phi](T)\Big(1+|q[\bF]| + \mcE_4^{1/2}[\bF^1]\Big), \\
\tp[s+3/2]\left|\chi\tfrac{D_{\Ls}(r^*\phi)}{r^*}\right|w^{1/2} &\lesssim \mathcal{E}^{1/2}_4[\phi](T)\Big(1+|q[\bF]| + \mcE_4^{1/2}[\bF^1]\Big)^2,\\
\tp[s]\tm[1/2]|\phi|w^{1/2}&\lesssim \mathcal{E}^{1/2}_1[\phi](T)
\end{align}
\end{subequations}
\end{Theorem}
\begin{Remark}
This is for a generic function $\phi$, so we may find bounds on higher derivatives of $\bphi$ without requiring higher derivatives of $\bF$.
\end{Remark}
	\section{The Bootstrap Estimate}
	We are now able to begin the proof of Theorem \eqref{Main} in earnest. We recall the energy norms
\begin{align*}
\mathcal{E}_0[\bG](T) &= E_0[\bG](T) + S_0[\bG](T)+C_0[\bG](T), \\
\mathcal{E}_0[\phi](T) &=  E_0[\phi](T) + S_0[\phi](T)+C_0[\phi](T), \\
\mathcal{E}_k(T) =\mathcal{E}_k[\bF, \bphi](T) &= \sum_{\substack{|I| \leq k,\wX \in \mbL}} \mathcal{E}_0[\Lie{\wX}^I\bF^1](T) + \mathcal{E}_0[D_{\wX}^I\bphi] + |q[\bF]|^2.
\end{align*}
We fix a constant $k \geq 11$, and take the bootstrap assumption
\begin{equation}\label{bootstrap}
\mathcal{E}_k(T) \leq \varepsilon_b^2.
\end{equation}
For now we assume $\varepsilon_b \leq 1$, and $\varepsilon_1$ is as bounded in the previous sections. Then, Theorem \ref{thm:L2F} implies
\begin{equation}
\label{FE}
\mathcal{E}_k[\bF^1](T) \lesssim \mathcal{E}_k[\bF^1](0) + \sum_{|I| \leq k}\lnorm \bJ[\Lie{\wX}^I \bF^1]\rnorm^2_{L^2[w]}.
\end{equation}
Theorem \ref{LInftyF} and \eqref{ChargeLInfty} imply that for $|I_1| \leq k-2, |I_2|\leq k-4$, and for all $I$,
\begin{subequations}\label{est:FBootstrap}
\begin{align}
\tp\tm[s+1/2]|\ual[\Lie{\wX}^{I_1}\bF^1]|w^{1/2} + \tp[s+1]\tm[1/2](|\rho[\Lie{\wX}^{I_1}\bF^1]|+|\sigma[\Lie{\wX}^{I_1}\bF^1]|)w^{1/2}  &\lesssim \mathcal{E}^{1/2}_{k}(T) \\
 \lVert\tp[s+1]\tm[1/2]\alpha[\Lie{\wX}^{I_1}\bF^1](w')^{1/2}\rVert_{L^2(L^\infty)} + |\tp[3/2+s]\alpha[\Lie{\wX}^{I_2}\bF^1] w^{1/2}| &\lesssim \mathcal{E}^{1/2}_{k}(T), \\
\tp[3]\tm[-1]|\alpha[\Lie{\wX}^I\bF^0]| + \tp[2](|\rho[\Lie{\wX}^I\bF^0]| +|\sigma[\Lie{\wX}^I\bF^0]|  +|\ual[\Lie{\wX}^I\bF^0]|) &\lesssim  \mathcal{E}^{1/2}_{k}(T).
\end{align}
\end{subequations}
It follows that 
\begin{equation}
\lnorm {\bF^1} \rnorm^{red}_{L^{\infty}[w]} \lesssim \mathcal{E}^{1/2}_{4}(T)
\end{equation}
so Theorem \ref{L2PhiMain} implies
\begin{equation}
\label{PhiE}
\mathcal{E}_k[\bphi](T) \lesssim \mathcal{E}_k[\bphi](0) + \mathcal{E}_k^{3/2}(T) + \sum_{|I|\leq k}\lnorm \tp[s]\tm[1/2]\Box_g^{\mathbb{C}}(D_{\wX}^I\bphi)w_\omu^{1/2}\rnorm^2_{L^2[w]}.
\end{equation}
For sufficiently small $\varepsilon_b$, we may say
\begin{equation}
C\mathcal{E}_k^{3/2}(T) \leq \tfrac12 \mathcal{E}_k(T).
\end{equation}
Similarly, for $|I_1| \leq k-2$, $|I_2|\leq k-4$ and $|I_3|\leq k-1$, Theorem \eqref{LinftyPhi} implies
\begin{subequations}\label{est:PhiBootstrap}
\begin{align}
\left(\tp\tm[s-1/2]|D_{\wX}^{I_1}\bphi| + \tp\tm[1/2+s]|D_{\uLs}D_{\wX}^{I_1}\bphi| + \tp[s+1]\tm[1/2](|\overline{D}D_{\wX}^{I_1}\bphi|)\right) w^{1/2} &\lesssim \mathcal{E}_k^{1/2}(T),\\
\Big\lVert \tp[s+1]\tm[1/2] \Big(\chi\tfrac{D_\Ls(r^*D_{\wX}^{I_1}\bphi)}{r^*} + (1-\chi)D_{\Ls}(D_{\wX}^{I_1}\bphi)\Big)(w')^{1/2}\Big\rVert_{L^2(L^\infty)} &\lesssim \mathcal{E}_k^{1/2}(T), \\
\tp[s+3/2]\left|\chi\tfrac{D_{\Ls}(r^*D_{\wX}^{I_2}\bphi)}{r^*}\right|w^{1/2} &\lesssim \mathcal{E}_k^{1/2}(T),\\
\tp[s]\tm[1/2]|D_{\wX}^{I_3}\bphi|w^{1/2}&\lesssim \mathcal{E}_k^{1/2}(T)
\end{align}
\end{subequations}
 Consequently, adding \eqref{FE} and \eqref{PhiE} gives
 \begin{Theorem} \label{thm:MainEnergy}
Given $(\bphi,\bF)$ which solves the MKG system \eqref{MKG} for time $[0,T]$ and satisfies the bootstrap assumption \eqref{bootstrap}, the following estimate holds for sufficiently small $\varepsilon_g, \varepsilon_b$:
\begin{equation}
\mathcal{E}_k(T) \lesssim \mathcal{E}_k(0) + \sum_{|I| \leq k}\lnorm \bJ[\Lie{\wX}^I\bF^1]\rnorm_{L^2[w]}^2 + \sum_{|I| \leq k}\lnorm(\Box_g^{\mathbb{C}}D_{\wX}^I\bphi)\tp[s]\tm[1/2]w_{\overline{\mu}}^{1/2}\rnorm_2^2.
\end{equation}
\end{Theorem}
We restate Theorem \ref{Main} in a form which will  be useful for the full system.
\begin{Theorem}
\label{Bootstrap}
Take constants $s, s_0, \gamma, \mu, \delta, {\overline{\mu}}$ such that $\frac12 < s < 1 < s_0 < 3/2$, $\mu < 1/2$, $\gamma>1/2$, $2s < 1+\gamma$, and $0 < 4\delta, 4\overline{\mu} < \min(1-2\mu, s-\frac12, 1-s, \gamma-\frac12, 1-\gamma, 1+\gamma-2s)$. Additionally, let $k_0$ be an integer with $k_0 \geq 11$.

There exists constants $\varepsilon_0$, $\varepsilon_1> 0$ such that if the metric, in the decomposition \eqref{Metric}, satisfies \eqref{MetLIint} and \eqref{MetL2int} for $\varepsilon_g < \varepsilon_1$, and 
\[
\mathcal{E}_k(0)\leq \varepsilon^2.
\]
for $\varepsilon < \varepsilon_0$, then there exists a constant $C$  independent of $T$ such that
\begin{equation}
\mathcal{E}_k(t)\leq C\varepsilon^2.
\end{equation}
Additionally, the bounds \eqref{est:FBootstrap}, \eqref{est:PhiBootstrap} hold with right hand sides replaced by $C\varepsilon$.
\end{Theorem}
To prove Theorem \ref{Bootstrap}, it suffices to prove the bound
\begin{equation}
\sum_{|I| \leq k}\lnorm \bJ[\Lie{\wX}^I\bF^1]\rnorm_{L^2[w]}^2 + \sum_{|I| \leq k}\lnorm(\Box_g^{\mathbb{C}}D_{\wX}^I\bphi)\tp[s]\tm[1/2]w_{\overline{\mu}}^{1/2}\rnorm_2^2 \lesssim |q|^2+\mathcal{E}_k^2(T) + \varepsilon_g^2\mathcal{E}_k(T).
\end{equation}
We will put this off until the following sections, where it follows from Lemmas \ref{Theorem:CurrentBound} and \ref{lem:PhiCommutators}.
To conclude the proof of Theorem \ref{Main}, we combine Theorem \ref{Bootstrap} with Proposition \ref{thm:InitialData}.
\subsection{The Initial Data Bounds}
Before we conclude this, we must show that the initial conditions of Theorem \ref{thm:MainEnergy} are compatible with those for Theorem \ref{Main}. To be precise, we need to show the following estimate:
\begin{Proposition}\label{thm:InitialData} Under the constraints of Theorem \eqref{Main}, we have the bound
\begin{equation}
\mathcal{E}_k(0) \lesssim \lnorm \mathbf{E}_{0df}\rnorm^2_{H^{k, s_0}(\mathbb{R}^3)} + \lnorm \mathbf{B}_0\rnorm^2_{H^{k, s_0}(\mathbb{R}^3)} + \lnorm D\bphi_0\rnorm^2_{H^{k, s_0}(\mathbb{R}^3)} + \lVert \,\dot{\!\bphi_0}\rVert^2_{H^{k, s_0}(\mathbb{R}^3)}.
\end{equation}
\end{Proposition}
This largely follows the proof in the Minkowski case, with some adaptations made to account for the metric.
From the definition \eqref{def:q}, Hölder's inequality and the condition $s>\tfrac12$,
\begin{equation}
|q|\leq \lnorm(1+r^*)^{s}\bJ^0\rnorm_{L^{6/5}(x)}\lVert(1+r^*)^{-s}\rVert_{L^6(x)} \leq C\lnorm(1+r^*)^{s}\bJ^0\rnorm_{L^{6/5}(x)}
\end{equation}
We now restate a technical elliptic estimate, Lemma 10.1 (and its generalization (3.53))in \cite{LS}:
\begin{Lemma}
Let $\psi$ be a smooth test function on $\mathbb{R}^3$, and define
\begin{equation}
q[\phi] = \int_{\mathbb{R}^3}\psi \, dx.
\end{equation}
Then, for $\tfrac12 < s_0 < \tfrac32$,
\begin{equation}
\int_{\mathbb{R}^3}r^{2s_0}\Big|\nabla\Big(\Delta^{-1}\psi + \tfrac{q[\phi]}{4\pi r}\Big)\Big|^2 \, dx\lesssim \lVert r^{s_0}\psi\rVert^2_{L^{6/5}(\mathbb{R}^3)}
\end{equation}
Additionally,
\begin{equation}
\sum_{|I| \leq k}\int_{\mathbb{R}^3}r^{2s_0+2|I|}\left|\nabla\nabla_x^I\left(\Delta^{-1}\psi + \tfrac{q[\psi]}{4\pi r}\right)\right|^2\,dx \lesssim \sum_{|I| \leq k}\lnorm r^{s_0+|I|}\nabla_x^I \psi\rnorm^2_{L^{6/5}(x)}.
\end{equation}
\end{Lemma}
Applying these estimates to $\psi = -\sqrt{|\wt{g}|}\bJ^0$ in the $\wt{x}$ coordinates gives
\begin{subequations}
\begin{align}
\lnorm r^{*s_0} \left(\mathbf{E}_{cf}^i[\bF] - \bF^0_{0i}\right)\rnorm_{L^2(\mathbb{R}^3)} &\lesssim \lnorm r^{*s_0} (\sqrt{|\wt{g}|}\bJ^0) \rnorm_{L^{6/5}(\mathbb{R}^3)}, \\
\lnorm r^{*s_0+|I|} \left(\mathbf{E}_{cf}^i[\nabla_{\wt{x}}^I\bF] - \nabla_{\wt{x}}^I\bF^0_{0i}\right)\rnorm_{L^2(\mathbb{R}^3)} &\lesssim \lnorm r^{*s_0+|I|}\nabla_{\wt{x}}^I (\sqrt{|\wt{g}|}\bJ^0) \rnorm_{L^{6/5}(\mathbb{R}^3)}.
\end{align}
\end{subequations}
Consequently,
\begin{equation}
\intsig{0}(1+r^*)^{2s_0}|\mathbf{E}[\bF^1]|^2\, dx \lesssim \intsig{0}(1+r^*)^{2s_0}|\mathbf{E}_{df}|^2\, dx + \lnorm(1+r^*)^{s_0}\bJ^0\rnorm^2_{L^{6/5}(x)},
\end{equation}
We bound $\bJ^0$ with the following general estimate on $\mathbb{R}^3$ for suitably regular $f, h$:
\begin{equation}\label{CurrBoundEst}
\lVert \langle r^*\rangle^{s_0}fh\rVert_{L^{6/5} }\leq \lVert\langle r^*\rangle^{s_0}f\rVert_{L^2 }\lVert h\rVert_{L^3 }\leq \lVert\langle r^*\rangle^{s_0}f\rVert_{L^2 }\lVert \partial h\rVert_{L^{3/2} } \leq \lVert\langle r^*\rangle^{s_0}f\rVert_{L^2 }\lVert \langle r^*\rangle^{s_0}\partial h\rVert_{L^{2} }\lVert \langle r^*\rangle^{-s_0}\rVert_{L^{6} }
\end{equation}
To see this, we expand $\widetilde{\bJ}^0 = \wt{g}^{0a}\mathfrak{I}(\bphi\overline{\wt{D}_a\bphi})$.
If $k-6$ or fewer derivatives fall on metric terms $\wt{g}$, we bound it uniformly by 1 and apply \eqref{CurrBoundEst} with $f = \overline{\wt{D}_a\bphi}), h = \bphi$, and applying \eqref{est:Kato}. Otherwise, set $h=g\bphi$, and use Lemma \ref{Lemma:FirstLInftyPhi} to bound the $\bphi$ terms. This gives the result
\begin{equation}
\sum_{|I|\leq k}\lnorm r^{*s_0+|I|}\nabla_{\wt{x}}^I (\sqrt{|\wt{g}|}\bJ^0) \rnorm_{L^{6/5}(\mathbb{R}^3)}\lesssim (1+\varepsilon_g)\mathcal{E}_k(0).
\end{equation}

Next we seek to bound 
\[
\lVert \langle r^*\rangle^{s_0+|I|} \nabla_{\wt{x}}^I\bF^1\rVert_{L^2(\mathbb{R}^3)}
\]
For this we use the identities
\[
d\bF = 0, \qquad \nabla^\beta \bF_{\alpha\beta} = \bJ_\alpha.
\]
which will allow us to bound terms where the derivative falls on the magnetic and electric field components respectively. For an initially split metric $g$, the harmonic coordinate condition implies
\begin{equation}
\wpa_0\bG_{i0}= {\wt{g}}_{00}(\bJ[\bG]_i- \wt{g}^{jk}\wpa_j\bG_{ik}-\widetilde{g}^{ab} \widetilde{\Gamma}^c_{a i}\bG_{cb}).
\end{equation}
We now show the initial bound for $\bF^1$. Given $\Lie{\wt{X}}^I\bF^1$, we decompose, commute time derivatives to the right, and use the $L^\infty$ bounds
\[
\sum_{1 \leq|a|\leq k-6}(1+r^*)^a\wpa^a \wt{g}^{ij}\lesssim 1, \qquad \sum_{1 \leq|a|\leq k}(1+r^*)^a\wpa^a {\wt{g}}_{00}\lesssim 1
\]
whenever applicable to get, for multiindices $a_i$,
\begin{align}
\sum_{|I|\leq k }|\Lie{\wt{X}}^I\bF^1| &\lesssim \sum_{|a_1|+a_t\leq k }(1+r^{*})^{s_0+|a_1|+a_t}|\wpa_{\wt{x}}^{a_1}\wpa_0^{a_t} \bF^1|\\
&\lesssim\sum_{|a_1|+|a_2|\lesssim k}(1+r^{*})^{s_0+|a_1|+|a_2|}|\wpa^{a_1}\wt{g}||\wpa_{\wt{x}}^{a_2}\bF^1|+ \sum_{|a_1|+|a_2|\lesssim k-1}(1+r^{*})^{s_0+|a_1|+|a_2|+1}|\wpa^{a_1}\wt{g}||\wpa^{a_2}\bJ[\bF^1]|. \nonumber
\end{align}
In order to bound the first term on the right we take standard $L^2-L^\infty$ bounds, and to bound the second term we decompose $\bJ[\bF^1] = \bJ[\bF] - \bJ[\bF^0]$, and note that $\bJ^0$ rapidly decays away from the origin (cf. Section \ref{ChargeCurrentBounds}), so for small $\varepsilon_g$ we may write
\begin{equation}
\slk\lVert \langle r^*\rangle^{s_0} \Lie{\wt{X}}^I\bF^1(0, \cdot)\rVert_{L^2(\mathbb{R}^3)}\lesssim\slk\lVert \langle r^*\rangle^{s_0+|I|}\wpa_{\wt{x}}^I\bF^1(0, \cdot)\rVert_{L^2(\mathbb{R}^3)} + \sum_{|I| \leq k-1}\lnorm|\langle r^*\rangle^{s_0+|I|+1}\wpa^I\bJ(0, \cdot)\rnorm_{L^2(\mathbb{R}^3)}.
\end{equation}
Next, we look at
\[
\sum_{|I| \leq k-1}\lnorm|\langle r^*\rangle^{s_0+|I|+1}\wpa^I\bJ(0, \cdot)\rnorm_{L^2(\mathbb{R}^3)}
\]
In order to do this, we write $\bJ$ as $\mathfrak{I}(\bphi\overline{\wt{D}\bphi})$. It suffices to bound
\[
\sum_{|a_1|+|a_2|\leq k-1}\lnorm|(1+r^*)^{s_0+|a_1|+|a_2|+1}|\wt{D}^{a_1}\bphi ||\wt{D}^{a_2}\wt{D}\bphi|\rnorm_2^2.
\]

In order to take care of this, we first take our time-slice Sobolev estimate
\begin{equation}
\label{TSZero}
|\langle r^*\rangle^{1+|a|}\wt{D}^a\bphi| \lesssim \sum_{|a_1|\leq 2}\langle r^*\rangle^{s_0 + |a|+|a_1|-1}|\wt{D}_{\wt{x}}^{a_1}\wt{D}^a\bphi|.
\end{equation}
which follows from the estimate $s_0> 1/2$. We finally bound
\begin{equation}
\sum_{|a|\leq k}\lVert\langle r^*\rangle^{s_0+|a|}\wt{D}\wt{D}^a\bphi\rVert.
\end{equation}
To bound this we commute time derivatives to the right and recall
\begin{equation}
D_t^2\bphi =-\wt{g}_{00}\wt{g}^{ij}\wt{D}_i\wt{D}_j\bphi.
\end{equation}
Iterating this and applying $L^\infty$ bounds whenever possible (and noting that we may always apply $L^\infty$ bounds on $g$ when they appear, as metric components are paired with two spatial derivatives of $\bphi$) gives
\begin{equation}
\sum_{|a|\leq k}\lVert\langle r^*\rangle^{s_0+|a|}\wpa\wpa^a\bphi\rVert \lesssim \sum_{|a|\leq k}\lVert\langle r^*\rangle^{s_0+|a|}|\wt{D}\wt{D}_{\wt{x}}^a\bphi|\rVert_{L^2(\mathbb{R}^3)} + \sum_{|a|\leq k-1}\lVert\langle r^*\rangle^{s_0+|a|+1}|\wt{D}\wt{D}_{\wt{x}}^a\wt{D}_0\bphi|\rVert_{L^2(\mathbb{R}^3)}
\end{equation}
We may combine everything to get the the initial decay bounds
\[
\mathcal{E}_k(0)^{1/2} \lesssim \left\lVert \mathbf{E}_{0df} \right\rVert_{H^{k_0, s_0}(\mathbb{R}^3)} + \left\lVert \mathbf{B}_0 \right\rVert_{H^{k_0, s_0}(\mathbb{R}^3)}+ \left\lVert D\bphi_0 \right\rVert_{H^{k_0, s_0}(\mathbb{R}^3)} + \left\lVert\,\,\dot{\!\!\bphi}_0 \right\rVert_{H^{k_0, s_0}(\mathbb{R}^3)}
\]
as long as the right hand side is sufficiently small.

\section{\texorpdfstring{Commutator Estimates for $\bF$}{Commutator Estimates for F}}
	Before proving Lemma \ref{Theorem:CurrentBound}, we will prove some $L^2$ and $L^\infty$ estimates which will prove useful.
\subsection{Pointwise and Energy bounds}
By Theorem \ref{LinftyPhi},
\[
\sum_{|I_1|\leq k-4}|D_{\wX}^{I_1}\bphi| \lesssim \mcE_k(T)^{1/2}\tp[-1]\tm[1/2-s]w^{-1/2}.
\]
Combining this with \eqref{est:FLorentz} gives
\begin{subequations}
\label{est:FPhiLInfty}
\begin{align}
\sum_{|I_1|+|I_2|\leq k-4}|D_{\wX}^{I_1}\bphi||(\Lie{\wX}^{I_2}\bF)_{{\wX}_1\Ls}| &\lesssim \mcE_k(T)\tp[-3/2-s]\lrangle{(r^*-t)_-}^{1/2-s}w^{-1/2}\\
\sum_{|I_1|+|I_2|\leq k-4}|D_{\wX}^{I_1}\bphi||(\Lie{\wX}^{I_2}\bF)_{{\wX}_1\Bs{i}}| &\lesssim \mcE_k(T)\tp[-1-s]\tm[-1/2]\lrangle{(r^*-t)_-}^{1/2-s}w^{-1/2}\\
\sum_{|I_1|+|I_2|\leq k-4}|D_{\wX}^{I_1}\bphi||(\Lie{\wX}^{I_2}\bF)_{{\wX}_1\uLs}| &\lesssim \mcE_k(T)\tp[-1]\tm[-1/2-s]\lrangle{(r^*-t)_-}^{1/2-s}w^{-1/2}
\end{align}
\end{subequations}
The bound $\omu < s-\frac12$ implies $w_\omu^{1/2}\leq w^{1/2}\lrangle{(r^*-t)_-}^{s-1/2}$, so for $|I|\leq k-4$
\begin{equation}
\big(\tp\tm|(\Lie{\wX}^{I}D\bphi)_{\uLs})| +\tp[s+1]\tm[1-s](|(\Lie{\wX}^{I}D\bphi)_{\Ls}| +  |(\Lie{\wX}^{I}D\bphi)_{\Bs{i}}|)\big)\langle(t-r^*)_+\rangle^{s-1/2} w^{1/2}\lesssim \mcE_k^{1/2}(T).
\end{equation}
Additionally, we have the slightly better norm
\begin{equation}
\tp[s+3/2]\tm[1/2-s]\Big|\tfrac{(\Lie{\wX}^I D(r^*\bphi))_{\Ls}}{r^*}\Big|\langle(t-r^*)_+\rangle^{s-1/2} w^{1/2}\lesssim \mcE_k^{1/2}(T).
\end{equation}
In order to show the $L^2$ bound, we first decompose $\bF = \bF^1+\bF^0$. Using \eqref{est:FDecomp} on \eqref{ChargeLInfty} and applying that to the $D_{\wX}^I\bphi$ term in $S_0[D_{\wX}^I\bphi]$ gives
\begin{subequations}
\label{OLFPhiL2}
\begin{align}
\sum_{|I_1|+|I_2| +1 \leq k}\lrnorm{\tp[s+1]\tz[1/2+{\omu}]\tm[-1]D_{\wX}^{I_1}\bphi(\Lie{\wX}^{I_2}\bF^0)_{\wX\Ls}(w')^{1/2}}_2 & \lesssim \mcE_k(T),\\
\sum_{|I_1|+|I_2|+1 \leq k}\lrnorm{\tp[s]\tz[1/2+{\omu}]D_{\wX}^{I_1}\bphi(\Lie{\wX}^{I_2}\bF^0)_{\wX\Bs{i}}(w')^{1/2}}_2 & \lesssim \mcE_k(T),\\
\sum_{|I_1|+|I_2| +1\leq k}\lrnorm{\tp[s]\tz[1/2+{\omu}]D_{\wX}^{I_1}\bphi(\Lie{\wX}^{I_2}\bF^0)_{\wX\uLs}(w')^{1/2}}_2 & \lesssim \mcE_k(T).
\end{align}
\end{subequations}
We now prove similar bounds for $\bF^1$. First,
\begin{subequations}
\begin{align}
\label{alphaF1Comm}\sum_{|I_1|+|I_2| +1\leq k}\big\lVert\tp[2s]\tm[1/2]|\alpha[\Lie{\wX}^{I_2}\bF^1]||D_{\wX}^{I_1}\bphi|w^{1/2}(w')^{1/2}\big\rVert_2&\lesssim \mathcal{E}_k(T),\\
\label{rhosigF1Comm}\sum_{|I_1|+|I_2| +1\leq k}\big\lVert\tp[2s]\tz[1/2+\omu]\tm[1/2](|\sigma[\Lie{\wX}^{I_2}\bF^1]|+|\rho[\Lie{\wX}^{I_2}\bF^1]|)|D_{\wX}^{I_1}\bphi| w^{1/2}(w')^{1/2}\big\rVert_2&\lesssim \mathcal{E}_k(T),\\
\label{BadualF1Comm}\sum_{|I_1|+|I_2| +1\leq k}\big\lVert\tp[s]\tz[1/2+\omu]\tm[s+1/2]|\ual[\Lie{\wX}^{I_2}\bF^1]||D_{\wX}^{I_1}\bphi|w^{1/2}(w')^{1/2}\big\rVert_2&\lesssim \mathcal{E}_k(T),\\
\label{GoodualF1Comm}\sum_{|I_1|+|I_2| +2\leq k}\big\lVert\tp\tz[1/2+\omu]\tm[2s-1/2]|\ual[\Lie{\wX}^{I_2}\bF^1]||D_{\wX}^{I_1}\bphi|w^{1/2}(w')^{1/2}\big\rVert_2&\lesssim \mathcal{E}_k(T),
\end{align}
\end{subequations}
To prove \eqref{alphaF1Comm} we use the $L^2(L^\infty)$ bound on $\alpha$ in Theorem \ref{LInftyF} and the spatial $L^2$ norm on $\bphi$, or the $L^\infty$ bound on $\bphi$ and the spacetime norm on $\alpha$. To bound \eqref{rhosigF1Comm} we combine the $L^\infty(L^\infty)$ bound with an $L^2(L^2)$ bound. To bound \eqref{BadualF1Comm} and \eqref{GoodualF1Comm} we take $L^\infty$ estimates on $\bphi$ terms using Lemmas \ref{Lemma:FirstLInftyPhi} and \ref{lem:PhiBadComp} respectively and bound $\ual$ in $L^2(L^2)$.
Then, the decomposition \eqref{est:FDecomp} gives
\begin{subequations}
\label{WTFPhiL2}
\begin{align}
\label{F1CommNice}\sum_{\substack{|I_1|+|I_2|+1 \leq k }}\lrnorm{\tp[2s-1]\tm[1/2]D_{\wX}^{I_1}\bphi(\Lie{\wX}^{I_2}\bF^1)_{{\wX}\Ls}w^{1/2}(w')^{1/2}}_2 &\lesssim \mcE_k(T), \\
\sum_{\substack{|I_1|+|I_2|+1 \leq k }}\lrnorm{\tp[2s-1]\tz[1/2+{\omu}]\tm[1/2]D_{\wX}^{I_1}\bphi(\Lie{\wX}^{I_2}\bF^1)_{{\wX}\Bs{i}}w^{1/2}(w')^{1/2}}_2 &\lesssim \mcE_k(T), \\
\label{F1CommBad}\sum_{\substack{|I_1|+|I_2|+1 \leq k }}\lrnorm{\tp[s-1]\tz[1/2+{\omu}]\tm[s+1/2]D_{\wX}^{I_1}\bphi(\Lie{\wX}^{I_2}\bF^1)_{{\wX}\uLs}w^{1/2}(w')^{1/2}}_2 &\lesssim \mcE_k(T), \\
\label{F1CommOK}\sum_{\substack{|I_1|+|I_2|+2 \leq k }}\lrnorm{\tm[2s-1/2]\tz[1/2+{\omu}]D_{\wX}^{I_1}\bphi(\Lie{\wX}^{I_2}\bF^1)_{{\wX}\uLs}w^{1/2}(w')^{1/2}}_2 &\lesssim \mcE_k(T).
\end{align}
\end{subequations}
 Combining  \eqref{OLFPhiL2} and \eqref{WTFPhiL2}, and recalling the spacetime norms of derivatives of $\bphi$ appearing in $\mcE_k(T)$ gives us the estimates
\begin{subequations}
\label{est:DIDPhi}
\begin{align}
\sum_{|I|\leq k}\lrnorm{\tp[s]\tz[1-s]\left|\tfrac{(\LieC[I]{\wX}(r^*\bphi))_{\Ls}}{r^*}\right|(w')^{1/2}}_2 &\lesssim \mcE_k(T)^{1/2}, \\
\sum_{|I|\leq k}\lrnorm{\tp[s]\tz[3/2-s+{\omu}]|(\LieC[I]{\wX} D\bphi)_{\Bs{i}}|(w')^{1/2}}_2 &\lesssim \mcE_k(T)^{1/2}, \\
\label{est:DIDPhi3}\sum_{|I|\leq k}\lrnorm{\tm[s]\tz[3/2-s+{\omu}]|(\LieC[I]{\wX} D\bphi)_{\uLs}|(w')^{1/2}}_2 &\lesssim \mcE_k(T)^{1/2}, \\
\label{est:DIDPhi4}\sum_{|I|\leq k-1}\lrnorm{\tm[s]\tz[1/2+{\omu}]|(\LieC[I]{\wX} D\bphi)_{\uLs}|(w')^{1/2}}_2 &\lesssim \mcE_k(T)^{1/2}.
\end{align}
\end{subequations}

\subsection{Bounding the Current Norm}

We restate the current norm \eqref{JL2} with raised indices
\begin{equation}
\label{CurrentNorm2}
\left\lVert \bJ\right\rVert_{L^2[w]} = \left\lVert \tp[s]\tz[-1/2-{\overline{\mu}}]\tm[1/2] \bJ^{\uLs} w^{1/2}_{\overline{\mu}}\right\rVert_2 + \left\lVert\tp[s]\tm[1/2]|\bJ^{\Bs{i}}|w_{\overline{\mu}}^{1/2}\right\rVert_2 + \left\lVert\tz[s-1/2-{\overline{\mu}}]\tm[s+1/2]|\bJ^{\Ls}| w_{\overline{\mu}}^{1/2}\right\rVert_2.
\end{equation}
If we write $\bJ = \bJ[\bG]$, where $\bG$ is a two-form, and $\bG^0$ and $\bG^1$ are the charged and charge-free part of $\bG$ respectively, then we define the decomposition
\[
\bJ^0[\bG] = \bJ[{\bG^0}],\qquad \bJ^1[\bG] = \bJ[{\bG^1}].
\]
\begin{Lemma}\label{Theorem:CurrentBound}
If $(\bF, \bphi)$ solve the system \eqref{MKG} on an interval $[0,T]\times\mathbb{R}^3$ and satisfy the bootstrap assumption \eqref{bootstrap}, then
\begin{equation}
\label{CFEst}
\sum_{\substack{|I| \leq k}}\lnorm \bJ[\Lie{\wX}^I \bF^1] \rnorm_{L^2[w]} \lesssim |q|+\mathcal{E}_k(T) + \varepsilon_g\mathcal{E}^{1/2}_k(T).
\end{equation}
\end{Lemma}
\begin{proof}
It suffices to show \eqref{CFEst} holds if $\bF^1$ is replaced by $\bF$ or $\bF^0$.
We recall the identity \eqref{FComms}, which we iterate and use \eqref{DT} to get
\begin{equation}
\label{JComms}
|\bJ[\Lie{\wX}^I \bF^1]^{\alpha} - (\Lie{\wX}^I\bJ^1)^\alpha| \lesssim \sum_{|I_1|+|I_2|+1 = |I|}\Big|\Lie{\wX}^{I_1}\Big((\nabla_\beta(g^{\gamma\delta}(\Lie{\wX}g)_{\gamma\delta})(\Lie{\wX}^{I_2}(\bF^{1\sharp}))^{\alpha\beta}\Big)\Big|,
\end{equation}
On the right hand side of equation \eqref{JComms} we can replace $(\Lie{\wX}g)_{\gamma\delta}$ with $\TDT{X}_{\gamma\delta}$, as the difference is the derivative of a constant.
\subsubsection{\texorpdfstring{The estimate on $\bJ$}{The estimate on J}}

 We can expand the first term out as
\begin{equation}
(\Lie{\wX}^I\bJ)^\alpha = \Lie{\wX}^I\left(g^{\alpha\beta}\mathfrak{I}\left(\bphi\overline{D_\beta\bphi}\right)\right).
\end{equation}
When derivatives fall on $g$, at each step we write $(\Lie{\wX}g)^{\alpha\beta} =( \OLie{\wX}g)^{\alpha\beta} - c_{\wX}g^{\alpha\beta}$. Using \eqref{eq:LeibnizRule} and \eqref{Comm1}
\begin{equation}
\Lie{\wX}(\phi \overline{D\psi})_{\alpha} = D_{\wX}\phi\overline{D_\alpha\psi} + \phi(\overline{\LieC{\wX}D\phi})_{\alpha}.
\end{equation}
Iterating and symmetrizing this gives us, for any vector $\wt{U}$,
\begin{align}
\label{LieCurrent}
\left|\mathfrak{I}(\Lie{\wX}^I(\bphi\overline{D\bphi}))_{\wt{U}}\right| &\lesssim \sum_{\substack{|I_1|+|I_2|\leq |I| }}\left|\mathfrak{I}\left(D_{\wX}^{I_1}\bphi\overline{D_{\wt{U}} D_{\wX}^{I_2}\bphi}+ D_{\wX}^{I_2}\bphi\overline{D_{\wt{U}} D_{\wX}^{I_1}\bphi}\right) \right| + \\
&\quad+ \sum_{\substack{|I_1|+|I_2|+|I_3|+1 \leq |I|}}\left|D_{\wX}^{I_1}\bphi\overline{D_{\wX}^{I_2}\bphi}|(\Lie{\wX}^{I_3}\bF)_{\wX {\wt{U}}}\right|\nonumber.
\end{align}
We note the identity
\begin{equation}
\mathfrak{I}\left(D_{\wX}^{I_1}\bphi\overline{D_U D_{\wX}^{I_2}\bphi}+ D_{\wX}^{I_2}\bphi\overline{D_U D_{\wX}^{I_1}\bphi}\right) = \mathfrak{I}\left(D_{\wX}^{I_1}\bphi\tfrac{\overline{D_U (r^*D_{\wX}^{I_2}\bphi)}}{r^*}+ D_{\wX}^{I_2}\bphi\tfrac{\overline{D_U (r^*D_{\wX}^{I_1}\bphi)}}{r^*}\right),
\end{equation}
as at each step the difference is the imaginary part of a real quantity.
We are ready to show energy and decay bounds, with the latter necessary to control energy terms coming from the metric. We combine \eqref{est:LinftyDPhi} and \eqref{est:FPhiLInfty} to get, for $|I|\leq k-4$,
\begin{subequations}
\begin{align}
|\Lie{\wX}^I\mathfrak{I}\left(\bphi\overline{D\bphi}\right)_{\Ls}| &\lesssim \mathcal{E}_k(T)\tp[-5/2-s]\tm[1/2-s]w^{-1}, \\
|\Lie{\wX}^I\mathfrak{I}\left(\bphi\overline{D\bphi}\right)_{\Bs{i}}|&\lesssim \mathcal{E}_k(T)\tp[-2-s]\tm[-s]w^{-1},		\\
|\Lie{\wX}^I\mathfrak{I}\left(\bphi\overline{D\bphi}\right)_{\uLs}|&\lesssim \mathcal{E}_k(T)\tp[-2]\tm[-2s]w^{-1}.
\end{align}
\end{subequations}
We recall the decomposition of Lie derivatives of the inverse metric, $\Lie{\wX}g^{-1} = \OLie{X}g^{-1} - c_Xg^{-1}$, as well as the term $\TDTD{X^I} = \OLie{X}^Ig^{-1}$. We can then take the pointwise estimate
\begin{equation}
(|\Lie{\wX}^I\bJ)^\alpha| \lesssim \sum_{|I'|\leq|I|}|g^{\alpha\beta}(\Lie{\wX}^{I'}\mathfrak{I}(\bphi\overline{D\bphi}))_\beta| + \sum_{|I_1|+|I_2|\leq|I|}|\TDTD{I_1}^{\alpha\beta}(\Lie{\wX}^{I_2}\mathfrak{I}(\bphi\overline{D\bphi}))_\beta|.
\end{equation}
To bound the second term, we first take
\begin{equation}
\sum_{\substack{|I_1| + |I_2| \leq |I| \\ |I_2|\leq k-6}}\lnorm |\TDTD{I_1}^{\alpha\beta}||\Lie{\wX}^{I_2}\mathfrak{I}(\bphi\overline{D\bphi})_\beta|\tp[s+1/2+{\overline{\mu}}]\tm[-{\overline{\mu}}]w_{\overline{\mu}}^{1/2}\rnorm_2 \lesssim \mathcal{E}_k(T)\sum_{|I|\leq k}\lnorm\tp[s+1/2+{\overline{\mu}}-2]\tm[-2s]\TDTD{\wX^I}\rnorm_2.
\end{equation}
Using $s+1/2+{\overline{\mu}}-2 < -1/2-\frac{\delta}{2}$, $-2s < -1$, we can easily bound this by $C\mathcal{E}_k(T)\varepsilon_g$ using \eqref{L21}.

When we can take $L^\infty$ estimates on the metric, we can lower indices easily, as all error terms correspond with metric terms with decay faster than $\tz[2s]$, the difference in weights between the highest and lowest weights in the norm \eqref{CurrentNorm2}.
 We first consider the inequality
\[
w_{\overline{\mu}}^{1/2}\lesssim \tm[1/2+2{\overline{\mu}}](w')^{1/2},
\]
which we combine with \eqref{CurrentNorm2} to get
\[
\lnorm \bJ \rnorm_{L^2[w]} \lesssim \lnorm\tp[s+1/2+{\overline{\mu}}]\tm[1/2+{\overline{\mu}}]|\bJ_\Ls|(w')^{1/2}\rnorm_2 + \lnorm\tz[s-1/2-{\overline{\mu}}]\tm[s+1+2{\overline{\mu}}]|\bJ_{\uLs}|(w')^{1/2}\rnorm_2 + \lnorm\tp[s]\tm[1+2{\overline{\mu}}]|\bJ_{\Bs{i}}|(w')^{1/2}\rnorm_2.
\]
We then combine the identity \eqref{LieCurrent} with the $L^2$ norm \eqref{est:DIDPhi} and the $\Lie{\wX}^I\bphi$ term in the $L^2$ norm $S_0[D_{\wX}^I\bphi](T)$ as well as the $L^\infty$ norms \eqref{est:FPhiLInfty} and \eqref{est:LinftyDPhi} to get our desired estimate. Note that for each component estimate we use the inequalities $w^{-1/2}\lesssim 1, \quad 1/2+2\mu-s < 0,\quad\tm[\delta]\leq\tp[\delta], \quad\tm[s-1/2]\leq\tp[s-1/2]$. We take the $\Bs{i}$ components as an example; other cases follow similarly. We first take, for $|I|\leq k$,
\begin{align}
\sum_{|I|\leq k}&\lnorm\tp[s]\tm[1+2{\overline{\mu}}]|(\Lie{\wX}^I\bJ)_{\Bs{i}}|(w')^{1/2}\rnorm_2 \lesssim \\
&\lesssim \sum_{\substack{|I_2|\leq k\\|I_1|\leq k-5}}\lrnorm{\tp \tm[s-1/2]D_{\wX}^{I_1}\bphi}_\infty\lrnorm{\tp[s-1]\tm[3/2-s+2{\overline{\mu}}](\Lie{\wX}^{I_2}D\bphi)_{\Bs{i}}(w')^{1/2}}_2 + \nonumber\\
&\quad+\sum_{\substack{|I_1|\leq k\\|I_2|\leq k-5}}\lrnorm{\tp[-1] \tm[1/2+2{\overline{\mu}}]D_{\wX}^{I_1}\bphi(w')^{1/2}}_2\lrnorm{\tp[1+s]\tm[1/2](\Lie{\wX}^{I_2}D\bphi)_{\Bs{i}}}_\infty \nonumber \\
&\lesssim \mcE_k(T)\nonumber,
\end{align}
using the inequalities
\[\tp[s-1]\tm[3/2-s+2{\overline{\mu}}] = \tp[s]\tz\tm[1/2+2{\overline{\mu}}-s]\leq\tp[s]\tz[1/2+{\overline{\mu}}]\]
 and 
\[\tp[-1]\tm[1/2+2{\overline{\mu}}] = \tp[s-1]\tz[1/2+{\overline{\mu}}]\tm[{\overline{\mu}}]\tp[1/2+{\overline{\mu}}-s]\leq \tp[s-1]\tz[1/2+{\overline{\mu}}].\]
Repeating this for each component gives the estimate
\begin{equation}\label{est:JBound}
\sum_{|I|\leq k}\big\lVert\Lie{\wX}^I\bJ\big\rVert_{L^2[w]} \lesssim \mathcal{E}_k(T).
\end{equation}
\subsubsection{\texorpdfstring{The estimate on $\bJ^0$}{The estimate on J^0}}
\label{ChargeCurrentBounds}We now look at $\bJ^{0a}$. It is easiest to calculate this with respect to the modified frame $\{\wpa_a\}$. Defining $\wt{g}$ and $\wt{\Gamma}$ to be the metric coefficients and Christoffel symbols of $g$ with respect to this frame, the identities
\[\wt\Gamma^b_{bc} = \tfrac12\wpa_c\ln|{\wt{g}}|, \qquad\wt \Gamma^\alpha_{\beta\gamma}{\bF^0}^{\beta\gamma}=0.\]
imply
\begin{equation}
\bJ^{0a} = \nabla_b {\bF^0}^{ab} = \tfrac12\wpa_b{(\ln |\wt{g}|)}{\bF^0}^{ab} + \wpa_b{\bF^0}^{ab}
\end{equation}
We define
\begin{equation}
{\widehat{\bF}^{ab}} = \mhat^{ac}\mhat^{bd}\bF^0_{cd}, \qquad \widehat{\bJ}^a = \wpa_b{\widehat{\bF}^{ab}}
\end{equation}
Direct computation gives
\begin{equation}
\widehat{\bJ}^a = \Big(\tfrac{q\ochi'}{4\pi r^{*2}}\Big)\Ls^a.
\end{equation}
Iterating \eqref{FrameComms}, noting that $\widehat{\bJ}$ is compactly supported in $t-r^*$, and taking the null decomposition gives the estimate
\begin{equation}
\tp[2]|(\Lie{\wX}^I\widehat{\bJ})^\uLs| + \tp(|(\Lie{\wX}^I\widehat{\bJ})^{\Bs{1}}| + |(\Lie{\wX}^I\widehat{\bJ})^{\Bs{2}}|) + |(\Lie{\wX}^I\widehat{\bJ})^\uLs| \lesssim |q|\tp[-2].
\end{equation}
It follows from direct integration (again noting that $\widehat{\bJ}$ is supported close to the light cone) that
\begin{equation}
\big\lVert(\Lie{\wX}^I\widehat{\bJ})\big\rVert_{L^2[w]}\lesssim |q|.
\end{equation}
We note generally that, for all tensorial quantities $\bf{K}$ which satisfy
\begin{equation}
|\mathbf{K}^a|\lesssim \tp[-3+\delta]\tm[-1]
\end{equation}
in all components, direct integration gives the bound
\begin{equation}
\big\lVert\mathbf{K}\big\rVert_{L^2[w]} \leq C.
\end{equation}
We may use this to bound all terms in
\begin{equation}
\bJ_{rem}^a = \wpa_b\bF^{0ab} - \widehat{\bJ}^a
\end{equation}
where the most derivatives fall on $\bF^0$ or $m_0 - \mhat$. Therefore, a straightforward calculation using \eqref{MetLI} and \eqref{MetricApprox2} gives
\[
\sum_{|I|\leq k}\big\lVert\Lie{X}^I(\bJ_{rem})^a\big\rVert_{L^2[w]}\lesssim \varepsilon_g|q| + |q|\lVert\tp[s+1/2+\omu]\tm[-\omu + \gamma]\tp[-2](|\wpa\Lie{X}^I H_0| + \tm[-1]\tp[-1+\delta]|H_0|)\rVert_2 \lesssim \varepsilon_g|q|.
\]
The bound on $\wpa_b{(\ln |\wt{g}|)}{\bF^0}^{ab}$ follows similarly (using \eqref{MetricApprox1} and fast decay of derivatives of the coefficient transformation matrix). For sufficiently small $\varepsilon_1$, we therefore have
\begin{equation}\label{est:J0Bound}
\sum_{|I|\leq k}\big\lVert\Lie{\wX}^I\bJ^0\big\rVert_{L^2[w]}\lesssim |q|
\end{equation}
\subsubsection{Bounding the Commutator terms}
To conclude the proof of Theorem \ref{Theorem:CurrentBound} it suffices to bound the right hand side of \eqref{JComms}. We recall the identity \eqref{id:DivX2}, and the consequent pointwise bound
\begin{equation}
\Lie{\wX}^I\nabla_\beta(g^{\gamma\delta}(\OLie{{\wX}}g)_{\gamma\delta}) \lesssim \sum_{|J| \leq |I|+1}\nabla_\beta(g^{\gamma\delta}(\OLie{{\wX}}^J g)_{\gamma\delta}) + \sum_{|J| + |K| \leq |I|+1}\nabla_\beta((\OLie{{\wX}}^Jg)^{\gamma\delta}(\OLie{{\wX}}^Kg)_{\gamma\delta}).
\end{equation}
Therefore, for $|I|+1 \leq k - 6$,
\begin{subequations}
\label{metricdivI}
\begin{align}
|\Ls^\alpha\Lie{\wX}^I\nabla_\alpha(g^{\gamma\delta}(\OLie{X}g)_{\gamma\delta})| &\lesssim \varepsilon_g\tp[-2+\delta], \\
|\Bs{i}^{\alpha}\Lie{\wX}^I\nabla_\alpha(g^{\gamma\delta}(\OLie{X}g)_{\gamma\delta})| &\lesssim \varepsilon_g\tp[-2+\delta], \\
|\uLs^{\alpha}\Lie{\wX}^I\nabla_\alpha(g^{\gamma\delta}(\OLie{X}g)_{\gamma\delta})| &\lesssim \varepsilon_g\tm[-1]\tp[-1+\delta].
\end{align}
\end{subequations}
If $|I|\leq k$, a repeated application of \eqref{id:DivX2} combined with the estimates \eqref{MetL2} and \eqref{MetLI} imply
\begin{subequations}
\label{metricdiv2}
\begin{align}
\label{metricdiv21}\lnorm \tp[-\delta]\Ls^\alpha\Lie{\wX}^I\nabla_\alpha(g^{\gamma\delta}(\OLie{X}g)_{\gamma\delta})(w_g')^{1/2}\rnorm_2 \lesssim \varepsilon_g, \\
\label{metricdiv22}\lnorm \tp[-\delta]\Bs{i}^\alpha\Lie{\wX}^I\nabla_\alpha(g^{\gamma\delta}(\OLie{X}g)_{\gamma\delta})(w_g')^{1/2}\rnorm_2 \lesssim \varepsilon_g,\\
\label{metricdiv23}\lnorm \tp[-1/2-\delta]\uLs^\alpha\Lie{\wX}^I\nabla_\alpha(g^{\gamma\delta}(\OLie{X}g)_{\gamma\delta})\rnorm_2 \lesssim \varepsilon_g.
\end{align}
\end{subequations}
In all cases, if Lie derivatives fall on $g^{\gamma\delta}$ we have terms which are quadratic and easy to bound using \eqref{MetL2} and \eqref{MetLI}. Otherwise, for \eqref{metricdiv21} and \eqref{metricdiv22} this follows from the spacetime bound \eqref{L22} combined with the estimates $\tm[-1/2-\mu]w < (w_g')^{1/2}$ and a dyadic decomposition in $1+t$ and for \eqref{metricdiv23} it follows from bounding the square of the $L^2$ norm by $\varepsilon_g(1+t)^{-1+\delta}$ and integrating.

Now we look at the case when Lie derivatives fall on $\bF^{1\sharp}$. Taking a null decomposition and combining the bounds \eqref{MetLI} and \eqref{est:FTLinfty}, as well as $\tm[-\omu]w_\omu \lesssim w$, gives the following:
\begin{subequations}\label{FSharpNiceLI}
\begin{align}
\sum_{|J| \leq k-6}\big|(\Lie{\wX}^{J}\bF^{1\sharp})^{\Ls\uLs}\big| &\lesssim \tp[-s-1]\tm[-1/2+\omu]w_\omu^{-1/2}\mathcal{E}_k[\bF](T)^{1/2}, \\
\sum_{|J| \leq k-6}\big|(\Lie{\wX}^{J}\bF^{1\sharp})^{\Ls\Bs{}}\big| &\lesssim \tp[-1]\tm[-1/2-s+\omu]w_\omu^{-1/2}\mathcal{E}_k[\bF](T)^{1/2}, \\
\sum_{|J| \leq k-6}\big|(\Lie{\wX}^{J}\bF^{1\sharp})^{\uLs\Bs{}}\big| &\lesssim \tp[-3/2-s]\tm[\omu]w_\omu^{-1/2}\mathcal{E}_k[\bF](T)^{1/2}, \\
\sum_{|J| \leq k-6}\big|(\Lie{\wX}^{J}\bF^{1\sharp})^{\Bs{1}\Bs{2}}\big| &\lesssim \tp[-s-1]\tm[1/2+\omu]w_\omu^{-1/2}\mathcal{E}_k[\bF](T)^{1/2},
\end{align}
\end{subequations}
We also have the $L^2$ norms
\begin{subequations}\label{FSharpNiceL2}
\begin{align}
\label{FSharpNice1}\sum_{|J| \leq k}\lnorm \tp[s]\tz[1/2+{\overline{\mu}}]\tm[-1/2-2{\overline{\mu}}](\Lie{\wX}^{J}\bF^{1\sharp})^{\Ls\uLs}w_{\overline{\mu}}^{1/2}\rnorm_2 &\lesssim \mathcal{E}_k^{1/2}, \\
\sum_{|J| \leq k}\lnorm \tm[s-1/2-2{\overline{\mu}}]\tz[1/2+{\overline{\mu}}](\Lie{\wX}^{J}\bF^{1\sharp})^{\Ls\Bs{i}}w_{\overline{\mu}}^{1/2}\rnorm_2	&\lesssim \mathcal{E}_k^{1/2}, \\
\label{FSharpNice3}\sum_{|J| \leq k}\lnorm \tp[s]\tz[1/2+{\overline{\mu}}]\tm[-1/2-2{\overline{\mu}}](\Lie{\wX}^{J}\bF^{1\sharp})^{\Bs{1}\Bs{2}}w_{\overline{\mu}}^{1/2}\rnorm_2	&\lesssim \mathcal{E}_k^{1/2}, \\
\label{FSharpNice4}\sum_{|J| \leq k}\lnorm \tp[s]\tm[-1/2-2{\overline{\mu}}](\Lie{\wX}^{J}\bF^{1\sharp})^{\uLs\Bs{i}}w_{\overline{\mu}}^{1/2}\rnorm_2	&\lesssim \mathcal{E}_k^{1/2}.
\end{align}
\end{subequations}
In order to obtain these bounds, we first decompose $g^{\alpha\beta} = m_0^{\alpha\beta}+H_1^{\alpha\beta}$. Then, if more derivatives fall on $\bF^{1\sharp}$, we use the estimates \eqref{MetLI} and \eqref{MetricApprox2} and a null decomposition, noting the intermediate bound
\begin{equation}
|\bG^{\uLs\Bs{i}}|\lesssim |\al[\bG]| + \varepsilon_g\tp[-1+\delta](|\rho[\bG]|+|\sigma[\bG]|) + \varepsilon_g\tz[\gamma]\tp[-1+\delta]|\ual[\bG]|.
\end{equation}
When more derivatives fall on $H_1$, then \eqref{FSharpNice1}-\eqref{FSharpNice3} follow from \eqref{est:FTLinfty} and \eqref{L21}, with no null decomposition necessary (using the bounds $s-\tfrac12-\overline\mu - 1 < -\tfrac12-\delta$, squaring, and integrating in time). For the estimate \eqref{FSharpNice4}, we use a null decomposition as well as the bound \eqref{L23} to bound terms containing $|\ual||H_0^{\uLs\uLs}|$.

In order to bound the right hand side of \eqref{JComms}, we take a null decomposition and apply the estimates \eqref{metricdivI}, \eqref{metricdiv2}, \eqref{FSharpNiceLI}, and \eqref{FSharpNiceL2}. Combining this with \eqref{est:JBound} and \eqref{est:J0Bound} completes the proof of Theorem \ref{Theorem:CurrentBound}.

\end{proof}

\section{\texorpdfstring{Commutator Estimates for $\bphi$}{Commutator Estimates for Phi}}
	Here we will bound the remaining term in Theorem \ref{thm:MainEnergy}.
\begin{Lemma}\label{lem:PhiCommutators}
For a pair $(\bphi, \bF)$ satisfying \eqref{MKG}, as well as the restrictions of Theorem \ref{Main} and the bootstrap assumption \eqref{bootstrap}, then for sufficiently small $\varepsilon_b$, and $\varepsilon_g \leq 1$, the estimate
\begin{align}\label{est:BoxDPhi}
\sum_{\substack{|I| \leq k}}\left\lVert\left(\Box^{\mathbb{C}}_gD_{\wX}^I\bphi\right) \tp[s]\tm[1/2]w_{\overline{\mu}}^{1/2}\right\rVert_2  \lesssim \mathcal{E}_k(T) + \varepsilon_g\mathcal{E}_k(T)^{1/2}.
\end{align}
\end{Lemma}
For ease of notation we say
\[
\slk \,T[\wX^I, \wX^{I_1},\hdots ,\wX^{I_m},\,\wY_1,\hdots ,\wY_n] = \sum_{|I|+|I_1|+\hdots+|I_m|+n\leq k}\,T[\wX^{I},\wX^{I_1},\hdots \wX^{I_m},\,\wY_1,\hdots,\wY_n],
\]
where all $\wX, \wY \in \{\wpa_\alpha, \Os{\alpha\beta}, \Ss\}$, and $I, \{I_k\}$ are multiindices of vector fields in this set. Since $(\bF, \bphi)$ solves \eqref{MKG}, it suffices to prove

\begin{equation}
\left\lVert\tp[s]\tm[1/2]\left(\left[\Box^{\mathbb{C}}_g, D_{\wX}^I\right]\bphi\right) w_{\overline{\mu}}^{1/2}\right\rVert_2 \lesssim \mathcal{E}_k(T) + \varepsilon_g\mathcal{E}_k(T)^{1/2}.
\end{equation}
We may iterate the identity \eqref{CommBox} to obtain
\begin{equation}
\label{ThreeParts}
\left|\left[\Box_g^\mathbb{C}, D_{\wX}^I\right]\bphi\right| \lesssim A_{|I|}+B_{|I|}+C_{|I|},
\end{equation}
where
\begin{subequations}
\begin{align}
\label{AjDef}A_j &:= \sum_{|I_1| + |I_2| \leq j-1}\left|D_{\wX}^{I_1}\left(D^\alpha{D_{\wX}^{I_2}}\bphi\nabla_\alpha(\nabla\cdot \wY)\right)\right|\\
\label{BjDef}B_j &:= \sum_{|I_1| + |I_2| \leq j-1} \left|D_{\wX}^{I_1}\left(D_\alpha\left(\DTD{\wY}^{\alpha\beta}D_\beta{D_{\wX}^{I_2}}\bphi\right) \right)\right|\\
\label{CjDef}C_j &:= \sum_{|I_1| + |I_2| \leq j-1}\left|D_{\wX}^{I_1}\left( i(\nabla^\alpha F_{\wY\alpha}{D_{\wX}^{I_2}}\bphi+ 2F_{\wY\alpha}D^\alpha{D_{\wX}^{I_2}}\bphi)\right)\right|
\end{align}
\end{subequations}
We briefly explain how we bound each term in our weighted $L^2$ norm, as this will be the focus of the remainder of this section. First, $A_{|I|}$ is quadratic. In order to bound $B_{|I|}$, we subtract off a lower-order term (which we can bound by $A_{|I-1|}, B_{|I-1|}, C_{|I-1|}$), then bound the remainder using the improved decay estimates coming from the modified coordinates. Finally, to bound $C_{|I|}$ we must use a special structure of the MKG system which shows up also in the Minkowski case. The result follows from induction, using $A_0 = B_0 = C_0 = 0$.

%\subsection{Bounding the First Line}
\subsection{Bounding $A_{|I|}$}
In order to bound $A_{|I|}$, we iterate \eqref{Comm2} to get:
\begin{equation}
\slj\left|\Lie{{\wX}_1}^{I_1}\left(D^\alpha{D_{{\wX}_2}^{I_2}}\bphi\nabla_\alpha(\nabla\cdot {\wY})\right)\right| \lesssim A^1_{j} + A^2_{j} + A^3_{j},
\end{equation}
where
\begin{subequations}
\begin{align}
A^1_{j} &= \slj \left|g^{\alpha\beta}D_{\alpha}D_{{\wX}_1}^{I_1}\bphi\nabla_\beta\Lie{{\wX}_2}^{I_2}(g^{\gamma\delta}(\OLie{{\wY}}g)_{\gamma\delta}) \right|\\
A^2_{j} &= \slj \left|\TDTD{{\wX}_1^{I_1}}^{\alpha\beta}D_\alpha D_{{\wX}_2}^{I_2}\bphi \nabla_{\beta}\Lie{{\wX}_3}^{I_3}(g^{\gamma\delta}(\OLie{{\wY}}g)_{\gamma\delta}) \right|\\
A^3_{j} &=\slj \left|\DTD{{\wX}_1^{I_1}}^{\alpha\beta} (\Lie{{\wX}_2}^{I_2}\bF)_{{\wY}_1\alpha} D_{{\wX}_3}^{I_3}\bphi \nabla_{\beta}\Lie{{\wX}_4}^{I_4}(g^{\gamma\delta}(\OLie{{\wY}_2}g)_{\gamma\delta})\right|.
\end{align}
\end{subequations}
$A_j^1$ consists of terms where the derivative commutes through $D^\alpha$, and $A_j^2$ and $A_j^3$ consist of the commutator terms. To bound $A^1_j$, we take the null decomposition
\[
\slj \left|g^{\alpha\beta}D_{\alpha}D_{{\wX}}^{I_1}\bphi\nabla_\beta\Lie{{\wX}}^{I_2}(g^{\gamma\delta}(\OLie{Y}g)_{\gamma\delta}) \right| \lesssim \slj|DD_{{\wX}}^{I_1}\bphi||\overline\partial \OLie{{\wX}}^{I_2}g| + |\overline{D}D_{{\wX}}^{I_1}\bphi||\partial  \OLie{{\wX}}^{I_2}g| + |g^{\uLs\uLs}||DD_{{\wX}}^{I_1}\bphi||\partial  \OLie{{\wX}}^{I_2}g|,
\]
We can take our first weighted estimate. If $|I_1| \leq k-6$, we use \eqref{est:LinftyDPhi} and \eqref{RMetric} to get
\begin{equation}\label{A1jEstimate}
\slkI{|I_1| \leq k-6}\!\!\!\! \lnorm \tp[s]\tm[1/2]g^{\alpha\beta}D_{\alpha}D_{{\wX}_1}^{I_1}\phi\nabla_\beta\Lie{{\wX}_2}^{I_2}(g^{\gamma\delta}(\OLie{Y}g)_{\gamma\delta}) w_{\overline{\mu}}^{1/2}\rnorm_2 \lesssim \mathcal{E}_k^{1/2}\slk\lnorm (\tp[s-1]\tm[-s]|\overline\partial \Lie{{\wX}_2}^{I_2}g| + \tp[-1]|\partial  \Lie{{\wX}_2}^{I_2}g|)(\tfrac{w_{\overline{\mu}}}{w})^{1/2}\rnorm_2,
\end{equation}
which is bounded by $\varepsilon_g\mathcal{E}_k^{1/2}$. If $|I_2| \leq k-6$, this can be bounded by
\begin{equation}\label{est:CommPhiL2DPhi}
\varepsilon_g \slk\lnorm(\tp[s-1+\delta]\tm[-1/2]|\overline{D}D_{{\wX}}^{I_1}\bphi| + \tp[s-3/2-\gamma/2]\tm[1/2]|{D}D_{{\wX}}^{I_1}\bphi|)w_{\overline{\mu}}^{1/2}\rnorm_2
\end{equation}
Then, $\tp[-1+\delta]<\tp[-1/2-2\delta-\overline\mu]$, $\tp[s-3/2-\gamma/2] < \tp[-1/2-2\delta-\overline\mu]$, so by \eqref{STEA}, \eqref{STEC}, this is finite, so
\begin{equation}
\slk\lnorm\tp[s]\tm[1/2]A_1^jw_\omu^{1/2}\rnorm_2 \lesssim\varepsilon_g\mathcal{E}_k^{1/2}
\end{equation}

We consider $A^2_j$. Proposition \ref{prop:ModifiedDTBounds} allows us to replace $\TDTD{\wX^I}$ with $\OLie{\wX}^I H_1$ and reduce to the case $|I_1| > k-6$. We can take our worst $L^\infty$ estimates on the $D\bphi$ and $\partial\nabla\cdot Y$ terms. The remaining quantities in the second term are therefore bounded by
\begin{equation}\label{A2jFirstEst}
\slk\mathcal{E}_k^{1/2}\varepsilon_g\lnorm\tp[s-2+\delta]\tm[-1-s]|\OLie{\wX}^{I_1}H_1|w_{\overline{\mu}}^{1/2}\rnorm_2,
\end{equation}
which is easily bounded by $\mathcal{E}_k^{1/2}\varepsilon_g$. Therefore,
\begin{equation}\label{A2jBound}
\slk\lnorm\tp[s]\tm[1/2]A_2^jw_{\overline{\mu}}^{1/2}\rnorm_2 \lesssim\varepsilon_g\mathcal{E}_k^{1/2}
\end{equation}
We now look at $A_j^3$. If $k-6$ vector fields or fewer appear in 
\[
(\Lie{{\wX}_2}^{I_2}\bF)_{\wY_1\alpha} D_{{\wX}_3}^{I_3}\bphi
\]
For these terms, we split $\DTD{{\wX}_1^{I_1}}$ into $g$ and $\TDTD{{\wX}_1^{I_1}}$. and bound the resultant terms the same way as \eqref{A1jEstimate} and \eqref{A2jFirstEst}respectively.

Otherwise, we use a null decomposition combined with the estimates \eqref{est:DIDPhi} and \eqref{MetLI}.
\begin{equation}\label{A3jBound}
\slk\lnorm\tp[s]\tm[1/2]A_3^jw_{\overline{\mu}}^{1/2}\rnorm_2 \lesssim\varepsilon_g\mathcal{E}_k^{1/2}
\end{equation}
We can state our first subresult:
\begin{equation}
\slk\lnorm\tp[s]\tm[1/2]\Lie{{\wX}_1}^{I_1}\left(-D^\alpha{D_{{\wX}_2}^{I_2}}\bphi\nabla_\alpha(\nabla\cdot Y)\right)w_{\overline{\mu}}^{1/2}\rnorm_2 \lesssim \varepsilon_g\mathcal{E}_k^{1/2}
\end{equation}
%\subsection{Bounding the Second Line}
\subsection{Bounding $B_{|I|}$}
\textbf{The Second Term.} 
We now look at \eqref{BjDef}, which we may reduce to bounding
\begin{equation}
\label{SecondTerm}
\slk\lnorm\tp[s]\tm[1/2]\LieC[I_1]{\wX}\left(D_\alpha\left(\TDTD{\wY}^{\alpha\beta}D_\beta D_{\wX}^{I_2}\bphi\right)\right)w_{\overline{\mu}}^{1/2}\rnorm_2,
\end{equation}
as the difference features a scalar multiple of the metric, and thus we may bound it using $A_{|I|-1}, B_{|I|-1}, C_{|I|-1}$.
We will commute $\LieC[I_1]{\wX}$ through $D_\alpha$, noting the commutator
\begin{equation}\label{PhiCommComm}
\left|[\LieC[I_1]{\wX}, D_\alpha]\left(\TDTD{Y}^{\alpha\beta}D_\beta D_{\wX}^{I_2}\bphi\right)\right| \lesssim \sum_{|I_3| + 1 + |I_4| = |I_1|} \left|\LieC[I_3]{\wX}[\LieC{\wY_1}, D_\alpha]\LieC[I_4]{\wX}\left(\TDTD{X_2}^{\alpha\beta}D_\beta D_{\wX}^{I_2}\bphi\right)\right|.
\end{equation}
We first bound
\[
\slkI{|I_1|\geq 1}\lnorm \tp[s]\tm[1/2]D_\alpha\left(\TDTD{X^{I_1}}^{\alpha\beta} \LieC[I_2]{\wX}(DD_{\wX}^{I_3}\bphi)_\beta\right)w_{\overline{\mu}}^{1/2}\rnorm_2.
\]
We will prove this for $r^* > (t+1)/2$, as the interior case is easier. Taking the null decomposition in $\beta$ and applying the product rule gives
\begin{subequations}\label{DTDTD}
\begin{align}
\slk&\lVert \tp[s]\tm[1/2]D_\alpha\left(\TDTD{X^{I_1}}^{\alpha\beta} \LieC[I_2]{\wX}(DD_{\wX}^{I_3}\bphi)_\beta\right)w_{\overline{\mu}}^{1/2}\rVert_2 \lesssim \nonumber\\ 
&\label{term:DTDTD1}\lesssim \slk\lVert \tp[s]\tm[1/2]\nabla_\alpha\left(\TDTD{\wX^{I_1}}^{\alpha\uLs}\right) \LieC[I_2]{\wX}(DD_{\wX}^{I_3}\bphi)_\uLs w_{\overline{\mu}}^{1/2}\rVert_2 + \\
&\label{term:DTDTD2}\qquad+ \slk\lVert \tp[s]\tm[1/2]\nabla_\alpha\left(\TDTD{\wX^{I_1}}^{\alpha\mcT}\right) \LieC[I_2]{\wX}(DD_{\wX}^{I_3}\bphi)_\mcT w_{\overline{\mu}}^{1/2}\rVert_2 + \\
&\label{term:DTDTD3}\qquad+ \slk\lVert \tp[s]\tm[1/2]\TDTD{\wX^{I_1}}^{\alpha\uLs} D_\alpha\left(\LieC[I_2]{\wX}(DD_{\wX}^{I_3}\bphi)_\uLs\right)w_{\overline{\mu}}^{1/2}\rVert_2 + \\
&\label{term:DTDTD4}\qquad+ \slk\lVert \tp[s]\tm[1/2]\TDTD{\wX^{I_1}}^{\alpha\mcT} D_\alpha\left(\LieC[I_2]{\wX}(DD_{\wX}^{I_3}\bphi)_\mcT\right)w_{\overline{\mu}}^{1/2}\rVert_2
\end{align}
\end{subequations}
We first bound \eqref{term:DTDTD1} and \eqref{term:DTDTD2}. First, by \eqref{est:FPhiLInfty}, and recalling \eqref{DTLIm},
\begin{subequations}
\begin{align}
\slkI{|I_2|+|I_3|\leq k-6}\left\lVert \tp[s]\tm[1/2] \nabla_\alpha(\OLie{\wX}^{I_1}H_1)^{\alpha\mcT}\LieC[I_2]{\wX}(DD_{\wX}^{I_3}\bphi)_\mcT w_{\overline{\mu}}^{1/2}\right\rVert_2 &\lesssim \mathcal{E}_k(T)^{1/2}\slk\left\lVert\tp[-1]\tm[{\overline{\mu}}]\nabla_\alpha(\OLie{\wX}^{I_1}H_1)^{\alpha\mcT}\right\rVert_2, \\
\slkI{|I_2|+|I_3|\leq k-6}\left\lVert \tp[s]\tm[1/2] \nabla_\alpha(\OLie{\wX}^{I_1}H_1)^{\alpha\uLs}\LieC[I_2]{\wX}(DD_{\wX}^{I_3}\bphi)_\uLs w_{\overline{\mu}}^{1/2}\right\rVert_2 &\lesssim \mathcal{E}_k(T)^{1/2}\slk\left\lVert\tp[s-1]\tm[\overline{\mu}-s]\nabla_\alpha(\OLie{\wX}^{I_1}H_1)^{\alpha\uLs}\right\rVert_2.
\end{align}
\end{subequations}
All of these are bounded by $\varepsilon_g\mathcal{E}_k(T)^{1/2}$. Next, \eqref{MetLI} gives
\begin{align*}
\sum_{\substack{\leq k \\1\leq|I_1| \leq k-6}}\left\lVert \tp[s]\tm[1/2+{\overline{\mu}}] \nabla_\alpha(\OLie{\wX}^{I_1}H_1)^{\alpha\mcT}\LieC[I_2]{\wX}(DD_{\wX}^{I_3}\bphi)_\mcT w^{1/2}\right\rVert_2 &\lesssim \varepsilon_g\slk\left\lVert\tp[s-1+\delta]\tm[-1/2+{\overline{\mu}}]\LieC[I_2]{\wX}(DD_{\wX}^{I_3}\bphi)_\mcT w^{1/2}\right\rVert_2, \\
\sum_{\substack{\leq k \\1\leq|I_1| \leq k-6}}\left\lVert \tp[s]\tm[1/2+{\overline{\mu}}] \nabla_\alpha(\OLie{\wX}^{I_1}H_1)^{\alpha\uLs}\LieC[I_2]{\wX}(DD_{\wX}^{I_3}\bphi)_\uLs w^{1/2}\right\rVert_2 &\lesssim \varepsilon_g\slk\left\lVert\tz[s]\tm[s-3/2+2{\overline{\mu}}]\LieC[I_2]{\wX}(DD_{\wX}^{I_3}\bphi)_\uLs w^{1/2}\right\rVert_2.
\end{align*}
Both of these are bounded by $\varepsilon_g\mathcal{E}_k(T)^{1/2}$, using \eqref{est:DIDPhi} (including \eqref{est:DIDPhi4}). A similar argument using \eqref{DTLIm} allows us to replace $(\OLie{\wX}^{I_1}H_1)$ with $\TDTD{\wX^{I_1}}$.

Now we bound \eqref{term:DTDTD3} and \eqref{term:DTDTD4}. First, we consider the case where the derivative falls on $\bF$. We recall the decompositions
\[
|\Ls(\psi)| + \sum_i|\Bs{i}(\psi)| \lesssim \tp[-1]\sum_{|I|=1}|\wX^I\psi|,\qquad |\uLs(\psi)| \lesssim \tm[-1]\sum_{|I|=1}|\wX^I\psi|.
\]
We combine this with Lemma \ref{NullComms} to get
\begin{subequations}
\begin{align}
\sum_{|I|\leq k-2}|\mcU((\Lie{\wX}^I \bF)_{\wX\mcU})| &\lesssim \sum_{|J|\leq k-1}\tz[-1]|\Lie{\wX}^J\bF| \\
\sum_{|I|\leq k-2}|\mcT((\Lie{\wX}^I \bF)_{\wX\mcU})| &\lesssim \sum_{|J|\leq k-1}|\Lie{\wX}^J\bF| \\
\sum_{|I|\leq k-2}\mcU((\Lie{\wX}^I \bF)_{\wX\mcT}) &\lesssim \sum_{|J|\leq k-1}|\Lie{\wX}^J\bF| + \tz[-1](|\alpha[\Lie{\wX}^J\bF]| + |\rho[\Lie{\wX}^J\bF]|+|\sigma[\Lie{\wX}^J\bF]|)
\end{align}
\end{subequations}
When the derivative falls on derivatives of $\bphi$, we use the estimates
\begin{subequations}
\label{TwoDers}
\begin{align}
\sum_{|I|\leq k-1}|D_{\Ls} D_{\T}D_{\wX}^I\bphi| &\lesssim \sum_{|J|\leq k}\left|\tp[-1]D_{\Ls} D_{\wX}^J\bphi\right| + \left|\tp[-2]D_{\wX}^J\bphi\right| \\
\sum_{|I|\leq k-1}|D_{\Bs{j}}D_{\T}D_{\wX}^I\bphi| &\lesssim \sum_{|J|\leq k}\left|\tp[-1]D_{\Bs{j}}D_{\wX}^J\bphi\right| + \left|\tp[-2]D_{\wX}^J{\bphi}\right|\\
\sum_{|I|\leq k-1}|D_{\uLs}D_{\T}D_{\wX}^I\bphi| &\lesssim \sum_{|J|\leq k}\left|\tp[-1]D_{\uLs}D_{\wX}^J\bphi\right| + \left|\tp[-2]D_{\wX}^J\bphi\right|\\
\sum_{|I|\leq k-1}|D_{\uLs}D_{\uLs}D_{\wX}^I\bphi| &\lesssim \sum_{|J|\leq k}\left|\tm[-1]D_{\uLs}D_{\wX}^J\bphi\right|
\end{align}
\end{subequations}
For all other derivatives we use the identity
\[
D_{U}D_{\uLs}\psi = D_{\uLs}D_{U}\psi + D_{[U, \uLs]}\psi + i\bF_{U\uLs}\psi.
\]
For $U\in\mcT$, $[U, \uLs]$ is either 0 or $\frac{1}{r^*}\Bs{j}$. Combining these gives
\begin{subequations}
\begin{equation}
\sum_{\substack{|I|\leq k-1 \\ T\in\mcT}}|D_TD_\uLs D_{\wX}^I\bphi|\lesssim \sum_{|J|\leq k}\tp[-1]|DD_{\wX}^J\bphi|+\tp[-2]|D_{\wX}^J\bphi| + \sum_{|I|\leq k-1 }|\bF||D_{\wX}^{I}\bphi|.
\end{equation}
\end{subequations}
We now put everything together. Again, we may replace $\TDTD{\wX^J}$ with $\OLie{\wX}^J H$ and bound the difference using the estimates \eqref{DTLIm}. In the cases where we can bound $|\OLie{\wX}^J H|$ in $L^\infty$ our estimates follow from \eqref{MetLI} and \eqref{est:DIDPhi} (with the worse weights of \label{est:DIDPhi3}). Otherwise the bounds \eqref{est:FPhiLInfty} and \eqref{est:LinftyDPhi} imply
\begin{align}
\label{HSpacetimeNeed}\slkI{|I_2|+|I_3|\leq k-6}&\lnorm \tp[s]\tm[1/2]\left((\OLie{\wX}^IH_1)^{\alpha U} D_\alpha\LieC[I_2]{\wX}(DD_{\wX}^{I_3}\phi)_U\right)w_{\overline{\mu}}^{1/2}\rnorm_2 \lesssim \\
&\lesssim \sum_{|I|\leq k}\lnorm \tp[s-2]\tm[-s]|(\OLie{\wX}^IH_1)|\rnorm_2 + \sum_{|I|\leq k}\lnorm \tp[s-1]\tm[-s-1]|(\OLie{\wX}^IH_1)^{\uLs\uLs}|\rnorm_2. \nonumber
\end{align}
These can be bounded by \eqref{L21} and \eqref{L22} without issue. Therefore,
\begin{equation}
\slkI{|I_1|\geq 1}\lnorm \tp[s]\tm[1/2]D_\alpha\left(\TDTD{X^{I_1}}^{\alpha\beta} \LieC[I_2]{\wX}(DD_{\wX}^{I_3}\bphi)_\beta\right)w_{\overline{\mu}}^{1/2}\rnorm_2 \lesssim \varepsilon_g\mcE_k^{1/2}.
\end{equation}
Finally, we bound the right hand side of \eqref{PhiCommComm}. We have four types of terms we need to bound here:
\begin{subequations}
\begin{align}
\sum_{|I_3| + 1 + |I_4| = |I_1|} &\Big\lVert\tp[s]\tm[1/2]\LieC[I_3]{\wX}[\LieC{\wY_1}, D_\alpha]\LieC[I_4]{\wX}\Big(\TDTD{{\wX}_2}^{\alpha\beta}D_\beta D_{\wX}^{I_2}\bphi\Big)w_\omu^{1/2}\Big\rVert_2 \lesssim\nonumber \\
\label{FirstTermSpecial}
&\lesssim\big\lVert\tp[s]\tm[1/2](\Lie{{\wX}}^{I_1}\bF)_{\wY_1\alpha}(\Lie{{\wX}}^{I_2}\bF)_{\wY_2\beta}\TDTD{{\wX}^{I_3}}^{\alpha\beta}D_{{\wX}}^{I_4}\bphi w_{\omu}^{1/2}\big\rVert_2 \\
\label{SecondTermSpecial}
&\lesssim\big\lVert\tp[s]\tm[1/2](\Lie{{\wX}}^{I_1}\bF)_{\wY_1\alpha}\TDTD{{\wX}^{I_3}}^{\alpha\beta}D_\beta D_{{\wX}}^{I_4}\bphi w_{\omu}^{1/2}\big\rVert_2 \\
\label{ThirdTermSpecial}&\lesssim\big\lVert\tp[s]\tm[1/2]\nabla_\alpha({\wX}^I(\nabla\cdot \wY) )(\Lie{{\wX}}^{I_2}\bF)_{\wY_2\beta}\TDTD{{\wX}^{I_3}}^{\alpha\beta}D_{{\wX}}^{I_4}\bphi w_{\omu}^{1/2}\big\rVert_2 \\
\label{FourthTermSpecial}&\lesssim\big\lVert\tp[s]\tm[1/2]\nabla_\alpha({\wX}^I(\nabla\cdot \wY))\TDTD{{\wX}^{I_3}}^{\alpha\beta}D_\beta D_{{\wX}}^{I_4}\bphi w_{\omu}^{1/2}\big\rVert_2
\end{align}
\end{subequations}
We first deal with \eqref{FirstTermSpecial}. We prove this with $\TDTD{\wX^{I_3}}$ replaced by $\OLie{\wX^{I_3}}$, and use \eqref{DTLIm} to bound the remainder. Additionally, we decompose $\bF = \bF^0 + \bF^1$. Then, if $\bF^0$ appears twice and if $|I_3| \leq k-6$, a null decomposition and \eqref{MetLI} give
\[
\sum_{\substack{\leq k \\ |I_3|\leq k-6}}\big\lVert\tp[s]\tm[1/2](\Lie{{\wX}}^{I_1}\bF)_{\wY_1\alpha}(\Lie{{\wX}}^{I_2}\bF)_{\wY_2\beta}\TDTD{{\wX}^{I_3}}^{\alpha\beta}D_{{\wX}}^{I_4}\bphi w_{\omu}^{1/2}\big\rVert_2 \lesssim \slk\varepsilon_g|q|^2\lVert\tp[s-1]\tz[\gamma-\delta]\tm[-3/2+\delta]D_{\wX}^{I_4}w^{1/2}\rVert_2\lesssim \varepsilon_g\mcE_k^{3/2}. 
\]
Otherwise, the bound is similar to \eqref{HSpacetimeNeed}. For all other terms, we apply Lemma \ref{lem:PhiBadComp}, noting $|I_4|\leq k-2$. First, for $I_1, I_2 \leq k-6$ the bound $\omu < \tfrac12 - s$ along with $L^\infty$ estimates gives
\begin{equation}
\big\lVert\tp[s]\tm[1/2](\Lie{{\wX}}^{I_1}\bF)_{\wY_1\alpha}(\Lie{{\wX}}^{I_2}\bF)_{\wY_2\beta}\TDTD{{\wX}^{I_3}}^{\alpha\beta}D_{{\wX}}^{I_4}\bphi w_{\omu}^{1/2}\big\rVert_2\lesssim |q|^2(\lVert\tp[s-1]\tm[-1-s]|\OLie{\wX}^{I_3}H_1|^{\uLs\uLs}\rVert_2 + \lVert\tp[-1]\tm[-1]|\OLie{\wX}^{I_3}H_1|\rVert_2).
\end{equation}
We may bound these using \eqref{L22} and \eqref{L23}. Otherwise, we take a null decomposition and use our $L^\infty$ bounds to reduce this to an $L^2$ norm on $\bF^1$.

Bounds on the other terms are much simpler. We can bound \eqref{SecondTermSpecial} in the same way as \eqref{term:DTDTD3} and \eqref{term:DTDTD4}. In order to bound \eqref{ThirdTermSpecial} and \eqref{FourthTermSpecial} we apply \eqref{A3jBound} and \eqref{A2jBound} respectively.

Therefore,
\begin{equation}
\sum_{|I_3| + 1 + |I_4| = |I_1|} \Big\lVert\tp[s]\tm[1/2]\LieC[I_3]{\wX}[\LieC{\wY_1}, D_\alpha]\LieC[I_4]{\wX}\Big(\TDTD{{\wX}_2}^{\alpha\beta}D_\beta D_{\wX}^{I_2}\bphi\Big)w_\omu^{1/2}\Big\rVert_2\lesssim \varepsilon_g\mcE_k^{1/2}.
\end{equation}
%\subsection{Bounding the Third Line}
\subsection{Bounding $C_{|I|}$}

\textbf{The Final Term:} 
We now attempt to bound \eqref{CjDef}. The primary concern here is that, even in the Minkowski spacetime, we have terms like $(\Lie{X_1}^{I_1}\bF)_{Y\uLs}D_\Ls D_{X_2}^{I_2}\bphi$. This does not decay well, so we must utilize additional cancellation noted in \cite{S92}. The commutator terms take the form:
\begin{equation}
\slk-i \LieC[I_1]{\wX}\left(\nabla_\beta(g^{\alpha\beta}( \bF_{\wY_1\alpha})) D_{\wX}^{I_2}\bphi + 2\bF_{\wY_1\alpha}g^{\alpha\beta}D_\beta(  D_{\wX}^{I_2}\bphi)\right).
\end{equation}

We can bound these terms using the pointwise estimate
\begin{align}
\slk&\left|\LieC[I_1]{\wX}\left(\nabla_\beta(g^{\alpha\beta}( \bF_{\wY_1\alpha})) D_{\wX}^{I_2}\bphi + 2\bF_{\wY_1\alpha}D_\beta(g^{\alpha\beta}  D_{\wX}^{I_2}\bphi)\right)\right|  \lesssim\\
&\lesssim\slk \left| [\Lie{\wX}^{I_1}, \nabla_\beta](g^{\alpha\beta}\bF_{\wY_1\alpha})D_{\wX}^{I_2}\bphi\right| + \slk\left|(\Lie{\wX}^{I_1}(i_{\wY_1} \bF))^\beta [\LieC[I_2]{\wX}, D_\beta]D_{\wX}^{I_3}\bphi \right| + \nonumber\\
&\quad+\slk\left| \nabla_\beta\left(\Lie{\wX}^{I_1}(g^{-1}i_{\wY_1}\bF)\right)^\beta D_{\wX}^{I_2}\bphi + 2(\Lie{\wX}^{I_1}(g^{-1}i_{\wY_1}\bF))^\beta D_\beta D_{\wX}^{I_2}\bphi\right|, \nonumber\\
&\lesssim C^1_{k} + C^2_k + C^3_k.\nonumber
\end{align}

We look at the three terms on the right. The identity \eqref{Comm3D} allows us to bound $C_k^1$ using \eqref{A3jBound}.

To bound $C_k^2$ we first use \eqref{Comm1} and \eqref{FirstTermSpecial} to reduce the problem to bounding
\begin{equation}
\slk\big\lVert \tp[s]\tm[1/2]g^{\alpha\beta}\Lie{\wX}^{I_1}\bF_{\wY_1\alpha}\Lie{\wX}^{I_2}\bF_{\wY_2\beta}D_{\wX}^{I_3}\bphi w_\omu^{1/2}\big\rVert_2.
\end{equation}
As usual, we split $\bF = \bF^0 + \bF^1$. A null decomposition gives the bounds
\begin{subequations}
\begin{align}
\big|\tp[s]\tm[1/2](g-\mhat)^{\alpha\beta}\Lie{\wX}^{I_1}\bF^0_{\wY_1\alpha}\Lie{\wX}^{I_2}\bF^0_{\wY_2\beta}\big| &\lesssim \varepsilon_g|q|^2\tp[s-1]\tz[\gamma-\delta]\tm[-3/2+\delta],\\
\big|\tp[s]\tm[1/2]\mhat^{\alpha\beta}\Lie{\wX}^{I_1}\bF^0_{\wY_1\alpha}\Lie{\wX}^{I_2}\bF^0_{\wY_2\beta}\big| &\lesssim \varepsilon_g|q|^2\tp[s-1]\tz\tm[-1/2].
\end{align}
\end{subequations}
In the support of $\bF^0$ we have the bound $\tm[-1/2]w_\omu \lesssim w'$, which, combined with the inequality and \eqref{bootstrap} gives
\begin{equation}
\slk\big\lVert \tp[s]\tm[1/2]g^{\alpha\beta}\Lie{\wX}^{I_1}\bF^0_{\wY_1\alpha}\Lie{\wX}^{I_2}\bF^0_{\wY_2\beta}D_{\wX}^{I_3}\bphi w_\omu^{1/2}\big\rVert_2\lesssim \varepsilon_g\mcE_k^{1/2}.
\end{equation}
For terms containing $\bF^1$, we note that $|I_1|+|I_2|+|I_3|\leq k-2$. It therefore suffices to bound
\begin{subequations}
\begin{align}
\label{CommgFF1}\slk&\big\lVert \tp[s-1]\tm[1-s]g^{\alpha\beta}|\Lie{\wX}^{I_1}\bF^1_{\wY_1\alpha}||\Lie{\wX}^{I_2}\bF^0_{\wY_2\beta}|\big\rVert_2,\\
\label{CommgFF2}\slk&\big\lVert \tp[s-1]\tm[1-s+\omu]g^{\alpha\beta}|\Lie{\wX}^{I_1}\bF^1_{\wY_1\alpha}||\Lie{\wX}^{I_2}\bF^1_{\wY_2\beta}|\langle \big\rVert_2.
\end{align}
\end{subequations}
 For each of these we take a null decomposition and \eqref{est:FLorentz}. To bound \eqref{CommgFF1} we apply the bounds \eqref{MetLI} and bound the $\bF$ terms using \eqref{ChargeLInfty}. In order to bound \eqref{CommgFF2} we follow the same process, bounding $|\Lie{\wX}^{I_1}\bF^1_{\wY_1\uLs}|$ and $|\Lie{\wX}^{I_2}\bF^1_{\wY_2\uLs}|$ in $L^\infty$ whenever one of them appears. Combining these results gives
 \begin{equation}
 \lVert \tp[s]\tm[1/2]C_k^2 w_\omu^{1/2}\rVert_2 \lesssim \mcE_k^{3/2}.
 \end{equation}
We focus on $C_k^3$, which we may reduce to the region $r^*>(t+1)/2$. We first write
\begin{equation}
D_\alpha D_{\wX}^{I_3}\bphi = \tfrac{1}{r^*}D_\alpha(r^*D_{\wX}^{I_3}\bphi) - \tfrac{\partial_\alpha(r^*)}{r^*}D_{\wX}^{I_3}\bphi.
\end{equation}
Therefore, we can write
\begin{equation}\label{id:PhiCommC}
C^3_k = \Big(\nabla_\alpha\left(\Lie{{\wX}}^{I_1}(g^{-1}i_{{\wY}_1}\bF)\right)^\alpha\ - 2\tfrac{\partial_\alpha(r^*)}{r^*}\left(\Lie{{\wX}}^{I_1}(g^{-1}i_{{\wY}_1}\bF)\right)^\alpha\Big)D_{\wX}^{I_2}\bphi + 2\left(\Lie{{\wX}}^{I_1}(g^{-1}i_{{\wY}_1}\bF)\right)^\alpha \Big(\tfrac{D_\alpha(r^*D_{\wX}^{I_2}\bphi)}{r^*}\Big).
\end{equation}
We first bound
\begin{equation}
\lnorm\tp[s]\tm[1/2]\left(\Lie{\wX}^{I_1}(g^{-1}i_{Y_1}\bF)\right)^\alpha \tfrac{D_\alpha\left(r^*D_{\wX}^{I_2}\bphi\right)}{r^*}w_\omu^{1/2}\rnorm_2.
\end{equation} 
When the derivative falls on $g^{-1}$ we expand using $\OLie{\wX}^{I}$. Terms with the modified deformation tensor $\TDTD{\wX^{I_1}}$ may be bounded in a similar fashion to \eqref{FirstTermSpecial}, so it remains to bound
\[
\lnorm\tp[s]\tm[1/2]g^{\alpha\beta}(\Lie{\wX}^{I_1}\bF)_{\wY_1\alpha} \tfrac{D_\beta(r^*D_{\wX}^{I_2}\bphi)}{r^*}w_\omu^{1/2}\rnorm_2
\]
We may replace $g^{\alpha\beta}$ with $\mhat^{\alpha\beta}$, noting that the norm corresponding to the difference may be bounded in the same way as \eqref{FirstTermSpecial}. Likewise, the terms containing $\bF^0$ are easily bounded. We bound the remainder by expanding $(\Lie{\wX}^{I_1}\bF^1)_{\wY_1\alpha}$ in its null decomposition, and using the $L^2(L^\infty)$ bounds \eqref{est:FTLinfty} and \eqref{est:LinftyDPhi} to bound $\alpha$ and $D_\uLs\psi$ terms whenever applicable, bounding the remainder using $E_k$ energy norms. For all other components, we use the $L^\infty(L^\infty)$ bounds \eqref{est:FTLinfty}, \eqref{est:LinftyDPhi}, and \eqref{ChargeLInfty}, and pair them with $S_k$ energy norms. Consequently,
\begin{equation}
\lnorm\tp[s]\tm[1/2]\left(\Lie{\wX}^{I_1}(g^{-1}i_{Y_1}\bF)\right)^\alpha \tfrac{D_\alpha\left(r^*D_{\wX}^{I_2}\bphi\right)}{r^*}w_\omu^{1/2}\rnorm_2 \lesssim \mathcal{E}_k(T).
\end{equation}

We now seek to bound
\begin{equation}
\Big\lVert \tp[s]\tm[1/2]\Big(\nabla_\alpha\left(\Lie{{\wX}}^{I_1}(g^{-1}i_{{\wY}_1}\bF)\right)^\alpha\ - 2\tfrac{\partial_\alpha(r^*)}{r^*}\left(\Lie{{\wX}}^{I_1}(g^{-1}i_{{\wY}_1}\bF)\right)^\alpha\Big)D_{\wX}^{I_2}\bphi  w_\omu^{1/2}\Big\rVert_2
\end{equation}
Again, when the Lie derivative falls on the metric, we decompose using the reduced Lie derivative $\OLie{}$. A null decomposition similar to the one used to bound \eqref{DTDTD}, combined with the inequality $r^{*-1}\lesssim \tp[-1]$ in our region of concern, gives
\begin{equation}
\slk\Big\lVert \tp[s]\tm[1/2]\nabla_\alpha\Big(\TDTD{\wX^{I_1}}^{\alpha\beta}(\Lie{{\wX}}^{I_2}\bF)_\beta\Big) - 2\tfrac{\partial_\alpha(r^*)}{r^*}(\TDTD{\wX^{I_1}}^{\alpha\beta})(\Lie{{\wX}}^{I_2}\bF)_\beta D_{\wX}^{I_2}\bphi  w_\omu^{1/2}\Big\rVert_2 \lesssim \mathcal{E}(T)
\end{equation}
To deal with the remainder, it suffices to bound
\begin{equation}\label{C3Mid}
\slk\lnorm \tp[s]\tm[1/2]g^{\alpha\beta}\left(\nabla_\alpha(\Lie{\wX}^{I_1}\bF)_{\wY_1\beta}) - \tfrac{2\partial_\alpha(r^*)}{r^*}(\Lie{\wX}^{I_1}\bF)_{\wY_1\beta})\right)D_{\wX}^{I_2}\bphi w_\omu^{1/2}\rnorm_2
\end{equation}
We decompose
\begin{equation}
\nabla^\alpha(\Lie{\wX}^{I_1}\bF)_{\wY_1\alpha} = \bJ[\Lie{\wX}^{I_1}\bF]_{\wY_1}D_{\wX}^{I_2}\bphi+ (\nabla^\alpha \wY_1^\beta)(\Lie{\wX}^{I_1}\bF)_{\beta\alpha}D_{\wX}^{I_2}\bphi.
\end{equation}
If at least one Lie derivative falls on $\bF$ in the first term, we may bound
\begin{equation}
\slkI{|I_1|\leq k-2}\lnorm \tp[s]\tm[1/2]J[\Lie{\wX}^{I_1}\bF]_{\wY_1}D_{\wX}^{I_2}\bphi w_\omu^{1/2}\rnorm_2 \lesssim \mathcal{E}_k^{1/2}\slk\lnorm\tp[s-1]\tm[1-s]\bJ[\Lie{\wX}^{I_1}\bF]_{\wY_1} w_\omu^{1/2}\rnorm_2.
\end{equation}
Decomposing $Y_1$ in terms of null vectors and using the relation $1-s < 1/2$ allows us to bound this quantity by
\[
\lnorm \bJ[\Lie{\wX}^{I_1}\bF]\rnorm_{L^2[w]}.
\]
In particular, we commute the Lie derivative through, bound the estimate as in \eqref{LieCurrent}, and take the null decomposition. If no Lie derivatives fall on $\bF$, we bound the corresponding norm straightforwardly using the $L^\infty$ norm
\[
|\bJ_{\wY_1}| \lesssim \mathcal{E}_k\tp[-1-s]\tm[-s]
\]
Finally, we consider
\[
\left((\nabla^\alpha \wY_1^\beta) - \tfrac{2\nabla^\alpha(r^*)}{r^*}\wY_1^\beta\right)(\Lie{X_1}^{I_1}\bF)_{\beta\alpha}D_{X_2}^{I_2}\bphi.
\]
We lower the indices and consider the null decomposition. To bound the corresponding term in \eqref{C3Mid} it suffices to show the bound
\[
\Big((\nabla_\alpha Y_{1\beta}) - \tfrac{2\nabla_\alpha(r^*)}{r^*}Y_{1\beta}\Big)\wt{U}_1^\alpha \wt{U}_2^\beta \lesssim 1,
\]
for $\wt{U}_1, \wt{U}_2 \in \mcU$ and apply our usual $L^2(L^2)$ and $L^\infty(L^\infty)$ estimates. However, to bound $\bF^{\uLs\Bs{j}}$ we instead need
\begin{equation}\label{NablaYNice}
\Big(\nabla_\alpha \wY_{1\beta}-\nabla_\beta \wY_{1\alpha} - \tfrac{2\nabla_\alpha(r^*)}{r^*}\wY_{1\beta}\Big)\Ls^\alpha{\Bs{i}}{}^\beta\lesssim \tz[1-\delta].
\end{equation}
We may replace $\nabla_\beta, \nabla_\alpha$ with $\partial_\beta, \partial_\alpha$, as a null decomposition combined with the bounds \eqref{MetLI} implies
\[
|\Gamma_{\Ls\Bs{i}\wY_1}| + |\Gamma_{\Bs{i}\Ls \wY_1}|\lesssim \tp[-1]
\]
Expanding in the null decomposition and using the bound \eqref{MetLI} lets us bound all terms by the right hand side of \eqref{NablaYNice}. The remainder follows from direct calculation. Therefore,
\begin{equation}
\left(\nabla_\alpha \wY_{1\beta}-\nabla_\beta \wY_{1\alpha}- \tfrac{2\partial_\alpha(r^*)}{r^*}\wY_{1\beta}\right)\bF^{\alpha\beta}  \lesssim \tp[-1+\delta]|\ual| + |\rho| + |\sigma| + |\alpha|.
\end{equation}
Our estimate follows.
We can combine everything:
\begin{Theorem}
If $(\bF, \bphi)$ solves \eqref{MKG}, and
\[
\mathcal{E}_k(T)\leq 1,
\]
 then
\begin{equation}
\label{PhiCommFull}
\slk\left\lVert\left(\Box^{\mathbb{C}}_g\Lie{\wX}^I\bphi\right) \tp[s]\tm[1/2]w_\omu^{1/2}\right\rVert_2 \lesssim (\mathcal{E}_k^{1/2}+\varepsilon_g)\mathcal{E}_k^{1/2}
\end{equation}
\end{Theorem}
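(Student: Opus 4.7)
The plan is to reduce the estimate to a bound on the iterated commutator $[\Box_g^{\mathbb{C}}, \Lie{X}^I]\phi$ (using $\Lie{X}^I\Box_g^{\mathbb{C}}\phi=0$), then expand this commutator by iterating \eqref{CommBox}, and finally control each resulting piece using the $L^2$ and $L^\infty$ estimates on $\TDT{X^I}$ and $\TDTD{X^I}$ established in Section 2 together with the energies on $\phi$ and $\widetilde F$ from Sections 3--6. After a bookkeeping that splits the modified Lie derivative $\OLie{X}g$ off from the scalar part $c_X g$, iteration of \eqref{CommBox} produces three structurally distinct families of terms: (i) a divergence factor $\nabla_\alpha\Lie{X'}^{I'}(\nabla\cdot Y)$ multiplying $D^\alpha D_{X''}^{I''}\phi$, (ii) a factor $\TDTD{X^{I'}}^{\alpha\beta}$ contracted against two covariant derivatives of $\phi$, and (iii) factors of the form $(\Lie{X'}^{I'}F)_{Y\alpha}D^\alpha D_{X''}^{I''}\phi$ coming from the magnetic commutator $[\LieC{Y}, D_\alpha]$.

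For the divergence-type terms I will take a null decomposition and split each sum according to whether at most $k-6$ vector fields fall on $\phi$ (in which case Theorem \ref{LinftyPhi} gives uniform $L^\infty$ bounds and the metric factor is absorbed in its $L^2$ energy from \eqref{DTL2}) or at most $k-6$ vector fields fall on the metric (in which case \eqref{metricdivI} provides pointwise control and the $\phi$ factor is placed in the energy via \eqref{STEnergy}). The weights $\tp[s]\tm[1/2]w_\iota^{1/2}$ are tuned so that after this case split the resulting bilinear expressions land in the weighted spacetime norms of \eqref{STEnergy}, producing a bound of order $\epsilon\mathcal{E}_k^{1/2}$. The deformation-tensor family is treated in the same spirit, with the key observation that the worst pairing $\TDTD{}^{\uLs\uLs}D_\uLs D_\uLs\phi$ is controlled precisely because the enhanced $\mcL\mcL$ estimate on the right of \eqref{DTL2} compensates the unfavorable $\tm[-2s]$ weight that would otherwise appear. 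Derivatives that need to be commuted past $\TDTD{}$ during this reduction are reinserted into the same scheme via an auxiliary application of $[\LieC{X}, D_\alpha]$.

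The main obstacle lies in the third family, where a naive null decomposition of $D^\alpha D_{X''}^{I''}\phi$ yields only $\tp[-2]\tm[1/2-s]$ decay in the $\Ls$ direction, which is insufficient for the left-hand weights. The plan is to mimic the cancellation used in \cite{LS}: rewrite
\[
D_\alpha D_{X''}^{I''}\phi = \frac{1}{r^*}D_\alpha(r^*D_{X''}^{I''}\phi) - \frac{\partial_\alpha(r^*)}{r^*}D_{X''}^{I''}\phi,
\]
so that the combination $\nabla^\alpha(i_Y F)_\alpha - 2r^{*-1}(\partial_\alpha r^*)(i_Y F)^\alpha$ gains an extra $\tz[1-\iota]$ factor after one checks the auxiliary pointwise estimate
\[
\left(\nabla_\alpha Y_{1\beta}-\nabla_\beta Y_{1\alpha}-\frac{2\partial_\alpha(r^*)}{r^*}Y_{1\beta}\right)\Ls^\alpha\Bs{i}^\beta \lesssim \tz[1-\iota]
\]
by direct calculation against each Lorentz field in $\mbL$ using \eqref{FrameComms}. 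The current portion of this expression is then absorbed by the bound \eqref{CFEst} on $J[\Lie{X}^I\widetilde F]$, which is what introduces the $\epsilon^2$ on the right. The genuinely delicate residual bilinear expressions $|\alpha[\Lie{X_1}^{I_1}\widetilde F]|\,|\ual[\Lie{X_2}^{I_2}\widetilde F]|$ with most derivatives on $\ual$ fail the pure $L^\infty\cdot L^2$ split; I will instead use duality, pairing the $L^2(t)L^\infty(x)$ bound on $\alpha$ from Theorem \ref{LInftyF} against the standard $L^\infty(t)L^2(x)$ bound on $\ual$, and closing the estimate via $(w_\iota/w')^{1/2}\lesssim \tm[1/2+2\iota]$ together with $1/2-s+1/2+2\iota\le s$.

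Adding the three families, one obtains a right-hand side bounded by $\epsilon(\mathcal{E}_k^{1/2}+\epsilon)+\epsilon^2 \lesssim \mathcal{E}_k+\epsilon^2$, which is \eqref{PhiCommFull}. The only genuinely new input beyond Sections 2--7 is the cancellation identity for the third family; everything else is a careful but mechanical deployment of the weighted norms already constructed.
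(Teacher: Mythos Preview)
Your proposal is correct and follows essentially the same approach as the paper: the reduction to the commutator via $\Box_g^{\mathbb{C}}\phi=0$, the three-family decomposition from \eqref{ThreeParts}, the $k-6$ case split against \eqref{DTL2} and \eqref{metricdivI}, the use of the enhanced $\mcL\mcL$ bound on $\TDTD{}^{\uLs\uLs}$ for the worst deformation-tensor pairing, the $r^*$-cancellation and the auxiliary $\tz[1-\iota]$ estimate for the $F$-family, and the $L^2(t)L^\infty(x)$--$L^\infty(t)L^2(x)$ pairing for the $|\alpha||\ual|$ term are all exactly what the paper does. One minor bookkeeping correction: several contributions in the third family (the purely quadratic $F\cdot D\phi$ or $F\cdot F\cdot\phi$ terms) are bounded by $\mathcal{E}_k$ directly without an $\epsilon$ prefactor, so your final tally $\epsilon(\mathcal{E}_k^{1/2}+\epsilon)+\epsilon^2$ should read $\mathcal{E}_k+\epsilon\mathcal{E}_k^{1/2}+\epsilon^2\lesssim\mathcal{E}_k+\epsilon^2$.
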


\section{Appendix: Inequalities}
	We start by stating Kato's diamagnetic inequality, which will be useful in the estimates to follow.
Given a complex scalar field $\phi$ and a vector field $Z$, we have the inequality
\begin{equation}\label{est:Kato}
\left|Z(|\phi|)\right| \leq |D_Z\phi|.
\end{equation}
This follows from the Cauchy-Schwarz inequality applied to the identity
\[
Z(|\phi|) = \mathfrak{R}\left(\langle\phi/|\phi|, D_Z\phi\rangle\right)
\]
Consequently, in the Sobolev-type estimates to follow, we can replace all cases of $Z(\phi)$ with $D_Z(\phi)$.
\begin{Lemma}
For any $q > 2$, and for any function $\phi$ with sufficient regularity, we have the following inequality on the sphere $\mathbb{S}^2_r$ of radius $r$, as long as $r > t/2$, $r > 1/2$:
\begin{equation}
\label{IntSob1}
\sup_{\mathbb{S}^2_r}|\chi\phi| \lesssim_q \tp[-2/q]\left(\sum_{|I| \leq, Z \in \mathbb{O}}\lnorm Z(\chi\phi) \rnorm_{L^q(S_r^2)}\right)
\end{equation}
\end{Lemma}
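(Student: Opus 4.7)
The plan is to reduce this to the standard Morrey--Sobolev embedding $W^{1,q}(S^2)\hookrightarrow L^\infty(S^2)$, valid for $q>2$ on the unit two-sphere, via a radial rescaling. The weight $\tp[-2/q]$ will arise entirely from the change of the area element combined with the observation that $r\approx\tp$ in the support of $\chi$ under the hypotheses $r>t/2$ and $r>1/2$; no $L^\infty$-to-energy argument is needed.

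Concretely, I would first pull back by the dilation $\omega\mapsto r\omega$, setting $\tilde\phi(\omega)=(\chi\phi)(r\omega)$ for $\omega\in S^2$. Since the area element of $\mathbb{S}^2_r$ is $r^2$ times that of the unit sphere, one has $\lVert\tilde\phi\rVert_{L^q(S^2)}=r^{-2/q}\lVert\chi\phi\rVert_{L^q(\mathbb{S}^2_r)}$. The rotation vector fields $\Omega_{ij}\in\mathbb{O}$ are scale-invariant, in the sense that $(\Omega_{ij}(\chi\phi))(r\omega)=\widetilde{\Omega}_{ij}\tilde\phi(\omega)$, where $\widetilde{\Omega}_{ij}$ denotes the corresponding rotation generator on $S^2$; this follows immediately from $\Omega_{ij}=x^i\partial_j-x^j\partial_i$ being tangent to the spheres and commuting with the radial dilation. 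Consequently the angular-derivative $L^q$-norms also transform with a single factor of $r^{-2/q}$.

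Next I apply the standard Morrey--Sobolev embedding on the fixed unit sphere: for $q>2$,
\[
\lVert f\rVert_{L^\infty(S^2)}\lesssim_q \lVert f\rVert_{L^q(S^2)}+\sum_{Z\in\mathbb{O}}\lVert Zf\rVert_{L^q(S^2)}.
\]
Applied to $\tilde\phi$ and transported back to $\mathbb{S}^2_r$, this yields
\[
\sup_{\mathbb{S}^2_r}|\chi\phi|\lesssim_q r^{-2/q}\sum_{\substack{|I|\leq 1\\ Z\in\mathbb{O}}}\lVert Z^I(\chi\phi)\rVert_{L^q(\mathbb{S}^2_r)},
\]
where the $|I|=0$ term contributes the bare $L^q$-norm of $\chi\phi$.

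Finally, the hypotheses $r>t/2$ and $r>1/2$ together force $r\approx\tp$, whence $r^{-2/q}\lesssim\tp[-2/q]$ and the conclusion follows. There is essentially no obstacle: the only point requiring attention is the scale invariance of the rotation fields, which is precisely what makes the weight collapse to the single geometric factor $r^{-2/q}$ rather than picking up additional powers of $r$ (as Cartesian or radial derivatives would).
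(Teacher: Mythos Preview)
Your proof is correct and follows essentially the same approach as the paper: rescale to the unit sphere, apply the Morrey--Sobolev embedding $W^{1,q}(S^2)\hookrightarrow L^\infty(S^2)$ for $q>2$, and pick up the factor $r^{-2/q}$ from the change of area element, then use $r\approx\tp$ in the relevant region. The paper phrases the last step slightly differently (noting that the cutoff $\chi$ already forces the support into the region where $r\approx\tp$, so the explicit hypotheses $r>t/2$, $r>1/2$ are redundant), but this is the same observation.
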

\begin{proof}
This is a straightforward consequence of Morrey's inequality applied to two charts on the unit sphere, and scaling to the sphere of radius $r$ (and introducing a factor of $r^{-2/q}$). The presence of the cutoff $\chi$ allows us to use the estimate $r^{-2/q}\lesssim\tp[-2/q]$.
\end{proof}
\begin{Lemma}
For $2 \leq q < 4$, if $r > t/4$ and $t >1$, we have
\begin{equation}
\label{IntSob2}
\lnorm \chi \phi \rnorm_{L^\infty(r^*)L^q({\mathbb{S}^2_r})} \lesssim_q \tp[-1+2/q]\tm[-1/2]\left(\lnorm\tm \drs(\chi\phi)\rnorm_{L^2(x)} + \sum_{|I|\leq1, Z \in \mathbb{O}}\lnorm Z(\chi\phi)\rnorm_{L^2(x)}\right).
\end{equation}
\end{Lemma}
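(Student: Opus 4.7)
The plan is to combine a $2$-dimensional Sobolev embedding on each sphere $\mathbb{S}^2_r$ with a weighted one-dimensional Sobolev-type estimate in the $r^*$ direction. Throughout I will use that, on the support of $\chi$ and under the hypotheses $r > t/4$, $t > 1$, one has $\tp \approx r^* \approx r$.

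First I would apply the standard Sobolev embedding $W^{1,2}(\mathbb{S}^2) \hookrightarrow L^q(\mathbb{S}^2)$ (valid for every $q \in [2,\infty)$) on the unit sphere and scale back to $\mathbb{S}^2_r$. Because the rotation fields $Z \in \mathbb{O}$ span the tangent bundle of $\mathbb{S}^2_r$ with normalization of order $r$, and because the volume element scales by $r^2$, this produces
\[
\lnorm \chi\phi\rnorm_{L^q(\mathbb{S}^2_r)} \lesssim_q \tp[-1+2/q]\sum_{|I|\leq 1,\, Z \in \mathbb{O}}\lnorm Z^I(\chi\phi)\rnorm_{L^2(\mathbb{S}^2_r)}.
\]
This handles the angular direction and already produces the factor $\tp[-1+2/q]$.

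Next, to upgrade to $L^\infty$ in $r^*$, I would set $f(r^*) = \lnorm\chi\phi\rnorm_{L^q(\mathbb{S}^2_r)}$ and invoke the one-dimensional estimate $|f(r^*)|^2 \leq 2\int_{r^*}^\infty |f(s)||f'(s)|\,ds$, which is valid since the cutoff $\chi$ and the decay assumption on $\phi$ force $f$ to vanish at infinity. Inserting a weight $\tm$ and applying Cauchy--Schwarz gives
\[
|f(r^*)|^2 \leq 2\left(\int_{r^*}^\infty \tm(s)^{-2}|f(s)|^2\,ds\right)^{1/2}\left(\int_{r^*}^\infty \tm(s)^{2}|f'(s)|^2\,ds\right)^{1/2}.
\]
By Minkowski's integral inequality, $|f'(r^*)| \leq \lnorm \drs(\chi\phi)\rnorm_{L^q(\mathbb{S}^2_r)}$ up to an angular-derivative error that is absorbed into the spherical Sobolev step applied a second time. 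Converting the resulting $\int \cdots r^2\,dr\,d\omega$ integrals to spatial $L^2$ norms and recognizing that $\tm^{-1}$ in front supplies the missing $\tm[-1/2]$ factor, one arrives at the desired inequality.

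The principal obstacle is the book-keeping for the weight $\tm$: on the exterior side $r^* > t$, $\tm$ grows outward, so one may freely pull out $\tm(r^*)^{-1}$ from $\int_{r^*}^\infty$, while on the interior side one would instead integrate in the direction $s \leq r^*$. Thus one should integrate from the closest direction in which $\tm$ is increasing (and split at the light cone $r^* = t$ if necessary), taking advantage of the fact that $\chi\phi$ is cut off well inside the light cone so boundary terms vanish. Once this split is handled, the two Sobolev steps combine to give the stated weight $\tp[-1+2/q]\tm[-1/2]$ in front of the $L^2(x)$ norms of $\tm\drs(\chi\phi)$ and of $Z(\chi\phi)$, which is exactly the form asserted.
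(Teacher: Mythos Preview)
Your two-step factorization (angular Sobolev on spheres, then a one-dimensional $L^\infty$ argument in $r^*$) uses one derivative too many. After the first step you have $\lnorm\chi\phi\rnorm_{L^q(\mathbb{S}^2_r)}$ controlled by $\lnorm Z^I(\chi\phi)\rnorm_{L^2(\mathbb{S}^2_r)}$; when you then differentiate $f(r^*)=\lnorm\chi\phi\rnorm_{L^q(\mathbb{S}^2_r)}$ and apply spherical Sobolev ``a second time'' to pass from $\lnorm\drs(\chi\phi)\rnorm_{L^q(\mathbb{S}^2_r)}$ down to $L^2(\mathbb{S}^2_r)$, you are forced to control $\lnorm Z\,\drs(\chi\phi)\rnorm_{L^2(x)}$, a mixed second derivative that is not present on the right-hand side of the stated inequality. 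The right-hand side carries only $\tm\drs(\chi\phi)$ and $Z(\chi\phi)$ separately, never their composition.

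This derivative-counting obstruction is precisely why the paper does \emph{not} factor the estimate as angular-then-radial. Instead it localizes dyadically in $|u^*|$, rescales each shell to a fixed cylinder $[1/4,4]\times\mathbb{S}^2$, and uses the anisotropic fractional embedding
\[
L^\infty(\mathbb{R})L^q(\mathbb{S}^2)\hookleftarrow H^{1/2+\epsilon_1}(\mathbb{R})\cdot H^{1-2/q+\epsilon_2}(\mathbb{S}^2)\hookleftarrow H^1(\text{cylinder}),
\]
the last inclusion holding exactly when $\tfrac12+\epsilon_1+(1-\tfrac2q)+\epsilon_2\le 1$, i.e.\ for $q<4$. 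This way the single $H^1$ derivative is \emph{shared} between the radial and angular directions rather than spent twice; after scaling back, the radial scaling by $\tm$ and angular scaling by $\tp$ produce the factors $\tm^{-1/2}$ and $\tp^{-1+2/q}$. The restriction $q<4$ in the statement is the signature of this fractional splitting and should have signaled that a naive integer-derivative factorization cannot close.
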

\begin{proof}
This follows from the Sobolev estimate on a cylinder, rescaled to a dyadic region. We first define the dyadic decomposition $\{\mathcal{I}_i^{\pm}\}$ for a given time slice $\Sigma_t$ as follows:
\begin{equation}
\mathcal{U}_i = \left\{ x : r^* > t/2, 2^i \leq |\us| + 1 \leq 2^{i+1}\right\}.
\end{equation}
We subdivide these as follows:
\begin{equation}
\mathcal{U}^+_i = \left\{ x : r^* > t/2, 2^i \leq |\us| +1 \leq 2^{i+1}, \us > 0\right\}, \qquad \mathcal{U}^-_i = \left\{ x : r^* > t/2, 2^i \leq |\us| + 1 \leq 2^{i+1}, \us < 0\right\}.
\end{equation}
Thus, $\mathcal{U}^+$ are supported in the interior, and $\mathcal{U}^-$ are supported in the exterior. Additionally, for any given time slice, $\mathcal{U}_i^+$ is empty for sufficiently large $i$. We can construct a partition of unity $\{\chi_{\mathcal{U}_i^\pm}\}$ such that the support of each is in the region $\{ x: 2^{i-1} \leq |\us| \leq 2^{i+2}, r^* > t/4\}$ and derivatives satisfy the bound $\drs (\chi_{\mathcal{U}_i^\pm}) \lesssim 2^{-i}$ for some constant independent of $i$.

We now define the cylindrical region
\begin{equation}
(\widetilde{r}, \omega) \in \mathcal{A} = [1/4, 4] \times \mathbb{S}^2. 
\end{equation}
We take maps from our cylinder to the region $\mathcal{U}_i^\pm$ as follows:
\begin{equation}
(\widetilde{r}, \omega) \to (t, t\pm 2^i\widetilde{r}\omega),
\end{equation}
with an appropriate cutoff (recall that $t$ is fixed). Here we scale the radial variable by approximately $\tm$ and the spherical variables by $\tp$. Then we take the fractional Sobolev estimates on the region $\mathcal{A}$
\begin{subequations}
\begin{align}
\lnorm\chi\phi\rnorm_{L^\infty(\mathbb{R})} &\lesssim \lnorm \chi\phi\rnorm_{H^{1/2 + 2\epsilon_1}},\\
\lnorm\chi\phi\rnorm_{L^q(\mathbb{R}^2)}&\lesssim \lnorm \chi\phi\rnorm_{H^{1 - 2/q + 2\epsilon_2}},
\end{align}
\end{subequations}
which hold for all $\epsilon_i > 0, 2 \leq q < 4$. Since the inequality
\begin{equation}
(1+|\xi_x|^2)^{1/4+\epsilon_1}(1+|\xi_y|^2)^{1/2-1/q+\epsilon_2} \lesssim (1+|\xi_x|^2 + |\xi_y|^2)^{1/2}
\end{equation}
holds in the phase space for sufficiently small $\epsilon_i$ (depending on $q$), taking charts gives us the inclusion inequality
\begin{equation}
\lnorm\chi\phi\rnorm_{L^\infty(r^*)L^q(\mathbb{S}^2)} \lesssim \lnorm\chi\phi\rnorm_{H^1(\mathcal{A})}.
\end{equation}
We can take our change of variables, noting scaling, to get the estimate \eqref{IntSob2}.
\end{proof}

This covers our estimates for the extended exterior. We now look at the far interior.
\begin{Lemma}
If $t \geq 1$, $r < 3/4t$, we have the following estimates for compactly supported functions $f$:
\begin{subequations}
\begin{align}
\label{IntSob3}
\lnorm f\rnorm_{L^\infty(\mathbb{R}^3)} &\lesssim t^{-1/2}\sum_{\substack{X \in \{\Ss, \Os{0i}\} \\ |I| \leq 1}}\lnorm X^I f \rnorm_{L^6(\mathbb{R}^3)} \\
\label{IntSob4}
\lnorm f\rnorm_{L^6(\mathbb{R}^3)} &\lesssim t^{-1}\sum_{\substack{X \in \{\Ss, \Os{0i}\} \\ |I| \leq 1}}\lnorm X^I f \rnorm_{L^2(\mathbb{R}^3)}
\end{align}
\end{subequations}
\end{Lemma}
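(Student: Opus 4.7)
The plan is to reduce both estimates to the standard $L^6$- and $L^\infty$-Sobolev embeddings on $\mathbb{R}^3$ by a spatial rescaling $x \mapsto x/t$ that sends the far interior $\{r < 3t/4\}$ on $\Sigma_t$ to the fixed ball $\{|\tilde{x}| < 3/4\}$. On this fixed domain the Lorentz-boost fields $\Omega^*_{0i}$ and the angular fields $S^*_i$ collapse to ordinary first-order operators of size $O(1)$, so the ordinary Sobolev embeddings apply and rescaling back produces the explicit powers of $t$.

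Concretely, first I would exploit that the support hypothesis $r < 3t/4$ forces $r^* = r$ in the region of interest, so on $\Sigma_t$ the boost field $\Omega^*_{0i} = t^*\dxsi{i} + \xsi{i}\dts$ acts as $t\,\partial_{x^i}$ in space (the $\dts$ piece is treated exactly as in the companion estimate \eqref{IntSob2}), while $S^*_i$ coincides with the standard spherical frame $S_i$, which consists of bounded linear combinations of $\partial_{x^j}$. Setting $\tilde{x} = x/t$ and $\tilde{f}(\tilde{x}) = f(t\tilde{x})$, one has the pointwise identities
\begin{equation}
\partial_{\tilde{x}^i}\tilde{f}(\tilde{x}) = (\Omega^*_{0i} f)(t\tilde{x}), \qquad S^*_i f(t\tilde{x}) = \sum_j a_{ij}(\omega)\,\partial_{\tilde{x}^j}\tilde{f}(\tilde{x}),
\end{equation}
with $a_{ij}$ bounded, and the Jacobian gives $\lnorm f\rnorm_{L^p(\mathbb{R}^3)} = t^{3/p}\lnorm\tilde{f}\rnorm_{L^p(\mathbb{R}^3)}$.

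Next I would apply the standard Sobolev embeddings for functions compactly supported in a fixed ball of $\mathbb{R}^3$,
\begin{equation}
\lnorm\tilde{f}\rnorm_{L^6(\mathbb{R}^3)} \lesssim \lnorm\tilde{f}\rnorm_{L^2(\mathbb{R}^3)} + \lnorm\nabla_{\tilde{x}}\tilde{f}\rnorm_{L^2(\mathbb{R}^3)}, \qquad \lnorm\tilde{f}\rnorm_{L^\infty(\mathbb{R}^3)} \lesssim \lnorm\tilde{f}\rnorm_{L^6(\mathbb{R}^3)} + \lnorm\nabla_{\tilde{x}}\tilde{f}\rnorm_{L^6(\mathbb{R}^3)}.
\end{equation}
Each $\partial_{\tilde{x}^i}\tilde{f}$ is replaced by the corresponding $\Omega^*_{0i}f$ (or by $S^*_i f$ modulo the bounded coefficients $a_{ij}$), and the scaling relation $\lnorm f\rnorm_{L^p} = t^{3/p}\lnorm\tilde{f}\rnorm_{L^p}$ produces exactly the factors $t^{-1} = t^{-3/2}/t^{-1/2}$ in \eqref{IntSob4} and $t^{-1/2} = t^{-1/2}/t^{0}$ in \eqref{IntSob3} after rearranging. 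Note that the undifferentiated $L^2$ or $L^6$ term on the right corresponds to $|I|=0$ in the stated sum, so there is no loss.

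The only real bookkeeping point, which is the main (minor) obstacle, is the treatment of the $\xsi{i}\dts$ piece of $\Omega^*_{0i}$: strictly speaking the inequalities are spatial, so one either restricts to a spatial slice on which $\dts f$ does not appear, or one interprets the right-hand side as including contributions from this term as well; both conventions appear in the companion lemmas and are consistent with how the estimate is invoked downstream. With that resolved, the rescaling argument is entirely mechanical and reproduces both \eqref{IntSob3} and \eqref{IntSob4}.
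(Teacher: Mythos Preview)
Your approach is essentially the same as the paper's: rescale the far interior to a fixed ball and apply the standard Sobolev embeddings, using that the boost fields $\Omega^*_{0i}$ furnish a factor of $t$ in front of each spatial derivative. The paper simply records the key pointwise inequality $t\lnorm\nabla f\rnorm_{L^p}\lesssim\sum_{Z}\lnorm Z^I f\rnorm_{L^p}$ and invokes the rescaling, so your write-up is a more explicit version of the same argument. One small correction: your claim that $r^*=r$ throughout $r<3t/4$ is not quite right, since $\chi$ need not vanish in the band $t/2<r<3t/4$; the paper handles this by noting that $\dxsi{i}$ and $\partial_i$ (and $\dts$ and $\partial_t$) are equivalent up to harmless $O(M)$ corrections, which is all that is needed for the bound $t|\nabla f|\lesssim\sum_Z|Z^I f|$.
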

\begin{proof}
This follows from a rescaling to the unit ball, noting that 
\[
t\lnorm\nabla f\rnorm_{L^p} \lesssim \sum_{\substack{X \in \{\Ss, \Os{0i}\} \\ |I| \leq 1}}\lnorm X^I(f)\rnorm_{L^p}.
\]
This follows almost identically from the proof in \cite{LS}, noting that $|\partial\phi|$ and $|\wpa\phi|$ are equivalent.
\end{proof}
Finally, we consider the light cone estimate. As in \cite{LS}, this is not strictly necessary in closing our estimate, as we can get our full results using an $L^2(t)L^\infty(x)$ estimate following from the time slice Sobolev estimates. However, this estimate gives us more precise control over the asymptotic behavior:
\begin{Lemma}
For $2 \leq q < 4$, we have the global estimate
\begin{equation}
\label{IntSob5}
\lnorm \chi \phi \rnorm_{L^\infty(u^*)L^q(\mathbb{S}^2_r)} \lesssim \tp[-3/2-2/q]\sum_{\substack{X \in \{\uls\Ls, \mathbb{O}\} \\|I| \leq 1}}\lnorm X^I(\chi\phi)\rnorm_{L^2(C(\us))}.
\end{equation}
\end{Lemma}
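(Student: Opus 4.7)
The plan is to mirror the proof of \eqref{IntSob2}, replacing the dyadic decomposition of a time-slice by a dyadic decomposition along the outgoing cone $C(\us)$. Concretely, I would partition $C(\us) \cap \{r^* \geq t/4\}$ into shells $\mathcal{C}_i = \{2^i \leq \uls+1 \leq 2^{i+1}\}$ (equivalently dyadic in $r^*$, since along the cone $\uls \approx 2r^*$ once $r^*$ dominates $|\us|$), subordinate to a smooth partition of unity $\{\chi_i\}$ whose tangential derivatives satisfy $|\Ls\chi_i| \lesssim 2^{-i}$. These cutoff derivatives will be absorbed into the $|I|\leq 1$ term on the right via the scaling $|\uls\Ls\chi_i| = O(1)$.

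On each shell I pull back the function to the reference cylinder $\mathcal{A}=[1,2]\times\mathbb{S}^2$ through the map $(\tilde r,\omega)\mapsto(r^*=2^i\tilde r,\omega)$. The cone tangent vector $\Ls$ coincides with $\partial_{r^*}$ along $C(\us)$ (since the cone is parameterized by $r^*$ with $\Ls(r^*)=1$), so $\partial_{\tilde r}$ on the reference cylinder corresponds to $2^i\Ls$ on the cone, which is comparable to $\uls\Ls$ on $\mathcal{C}_i$ because $\uls\approx 2^{i+1}$ there. The reference-sphere gradient $\slashed\nabla_\omega$ is dominated by the rotations $\mathbb{O}$, whose pulled-back action on $\omega$ is independent of the spherical radius. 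Applying the same fractional Sobolev embedding as in the proof of \eqref{IntSob2}, namely
\begin{equation*}
\lnorm\tilde f\rnorm_{L^\infty(\tilde r)L^q(\mathbb{S}^2)}\lesssim\lnorm\tilde f\rnorm_{H^{1/2+2\epsilon_1}_{\tilde r}L^2_\omega}+\lnorm\tilde f\rnorm_{L^2_{\tilde r}H^{1-2/q+2\epsilon_2}_\omega}\lesssim \lnorm\tilde f\rnorm_{H^1(\mathcal{A})},
\end{equation*}
valid for $2\leq q<4$ by the Fourier-multiplier inclusion used there, yields the $L^\infty(\tilde r)L^q(\mathbb{S}^2)$ estimate in terms of the $H^1$ norm on the cylinder of $\chi_i\chi\phi$.

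Scaling back to the cone, I track the dyadic factor $2^i\approx\tp$ through the induced cone volume element $\approx r^{*2}\,dr^*\,d\omega$ on the right and the sphere-of-radius-$r$ measure $r^2\,d\omega$ on the left; this produces the claimed $\tp$-power multiplying the sum of $L^2(C(\us))$ norms of $Z^I(\chi_i\chi\phi)$ with $Z\in\{\uls\Ls,\mathbb{O}\}$, $|I|\leq 1$. Taking the maximum over $i$ (noting that each point of $C(\us)$ lies in only finitely many supp $\chi_i$) recovers the $L^\infty$ norm along the cone.

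The main obstacle is a careful verification that derivatives landing on the cutoffs $\chi_i$, on the Jacobian of $\omega\mapsto r\omega$, and on the perturbed volume element of $dC(\us)$, all contribute terms already controlled by the $|I|\leq 1$ sum on the right with the correct weights. This rests on the equivalence $\uls\approx r^*\approx t$ on each $\mathcal{C}_i$ together with $|\Ls\chi_i|\lesssim 2^{-i}\approx\uls^{-1}$, which ensures that every acquired $\uls^{-1}$ factor is matched by an additional $\uls$ coming from the rescaled radial derivative $\uls\Ls$. As in the companion estimate of \cite{LS}, the far interior $r^*\leq t/4$ requires no separate treatment due to the factor $\chi$, and one may restrict attention to the region where the cutoff is nontrivial.
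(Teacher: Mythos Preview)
Your approach is essentially the same as the paper's: dyadic decomposition in $\uls$ along the cone, pullback to a reference cylinder, and the same fractional Sobolev embedding used for \eqref{IntSob2}. The paper's proof, however, explicitly flags one technical point you omit: since the cone $C(\us)$ is truncated by the time slab $t\in[1,T]$, the dyadic shells abutting the endpoints are incomplete, and one must first take a Sobolev extension across these boundaries before the reference-cylinder estimate applies cleanly. This is the only substantive addition the paper makes beyond your outline; the remainder of your argument (the identification $\partial_{\tilde r}\leftrightarrow\uls\Ls$, the control of cutoff derivatives, and the scaling of the volume elements) matches the paper's reasoning.
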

\begin{proof}
This is similar to the proof of inequality \eqref{IntSob2}, with two differences. First, due to boundary considerations along the light cone, we need to take a Sobolev extension function across the endpoints of the time slab $t \in [1, T]$. Second, we take our dyadic decomposition in $\uls$ instead of $\us$. This introduces a factor of $\tp$ instead of $\tm$ in the analogue to the radial derivative $\partial_{\tilde{r}}$ in the cylinder. However, this is paired with $\Ls$, a nicer behaving directional derivative.
\end{proof}
We can now put everything together:
\begin{Theorem}
\label{Sobolev}
Given a smooth test function $\phi$, we have the following estimates:
\begin{align}
\label{TSSob}
\lnorm\tp[1+\delta_+]\tm[1/2+\delta_-]\chi\phi w^{1/2}\rnorm_{L^\infty(\mathbb{R}^3)} &\lesssim \sum_{\substack{|I|, |J| \leq 1 \\ X \in \{\tm\drs\} \cup \mathbb{O}, Y \in \mathbb{O}}}\lnorm \tp[\delta_+]\tm[\delta_-]X^I Y^J (\chi\phi)w^{1/2}\rnorm_{L^2(\mathbb{R}^3)}\\
\label{LCSob}
\lnorm\tp[3/2+\delta_+]\chi\phi w^{1/2}\rnorm_{L^\infty(C_{\us})} &\lesssim \sum_{\substack{|I|, |J| \leq 1\\ X \in\{\uls\Ls, \mathbb{O}\}, Y \in \mathbb{O}}} \lnorm \tp[\delta_+]X^I Y^J(\chi\phi)w^{1/2}\rnorm_{L^2(C_{\us})} \\
\label{ISob}
\lnorm\tp[3/2+\delta_+](1-\chi)\phi w^{1/2}\rnorm_{L^\infty(\mathbb{R}^3)} &\lesssim \sum_{\substack{|J| \leq 2 \\ Z \in \{\Ss, \Os{0i}\}}}\lnorm \tp[\delta_+]Z^I((1-\chi)\phi)w^{1/2}\rnorm_{L^2(\mathbb{R}^3)} 
\end{align}
as well as their complex covariant equivalents
\begin{align}
\label{TSDSob}
\lnorm\tp[1+\delta_+]\tm[1/2+\delta_-]\chi\phi\rnorm_{L^\infty(\mathbb{R}^3)} &\lesssim \sum_{\substack{|I|, |J| \leq 1 \\ X \in \{\tm\drs\} \cup \mathbb{O}, Y \in \mathbb{O}}}\lnorm\tp[\delta_+]\tm[\delta_-] D_X^I D_Y^J (\chi\phi)\rnorm_{L^2(\mathbb{R}^3)}, \\
\label{LCDSob}
\lnorm\tp[3/2+\delta_+]\chi\phi\rnorm_{L^\infty(C_{\us})} &\lesssim \sum_{\substack{|I|, |J| \leq 1\\ X \in\{\uls\Ls, \mathbb{O}\}, Y \in \mathbb{O}}} \lnorm \tp[\delta_+]D_X^I D_Y^J(\chi\phi)\rnorm_{L^2(C_{\us})}, \\
\label{IDSob}
\lnorm\tp[3/2+\delta_+](1-\chi)\phi\rnorm_{L^\infty(\mathbb{R}^3)} &\lesssim \sum_{\substack{|J| \leq 2 \\ Z \in \{\Ss, \Os{0i}\}}}\lnorm \tp[\delta_+]D_Z^I((1-\chi)\phi)\rnorm_{L^2(\mathbb{R}^3)}.
\end{align}
\end{Theorem}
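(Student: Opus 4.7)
The plan is to assemble the six displayed estimates from the four ``atomic'' Sobolev lemmas \eqref{IntSob1}, \eqref{IntSob2}, \eqref{IntSob3}--\eqref{IntSob4}, and \eqref{IntSob5} already established above, together with Kato's diamagnetic inequality for the covariant versions. Throughout, the weight $w$ will be harmless: since $w$ is built out of $\us$ and $\uls$, a single vector field $X \in \{\tm \drs\} \cup \mathbb{O} \cup \{\uls\Ls\}$ applied to $w^{1/2}$ produces a term bounded pointwise by $w^{1/2}$ (for $\mathbb{O}$ and $\uls\Ls$ the derivative of $w$ is either zero or of the same order as $w$; for $\tm\drs$ it is bounded by $w^{1/2}$ thanks to the definition of $w$). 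So I can slide $w^{1/2}$ inside the vector field in each right-hand side at the cost of an irrelevant constant, reducing the six weighted statements to the unweighted versions.

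For \eqref{TSSob} I would first localize to the extended exterior via $\chi$. There the estimate follows by composing \eqref{IntSob1} with $q = 4$ (applied on the sphere $\mathbb{S}^2_r$), which buys a factor $\tp[-1/2]$ in exchange for one angular derivative and an $L^4(\mathbb{S}^2_r)$ norm, with \eqref{IntSob2} with $q = 4$ (applied along the $r^*$ direction), which converts the $L^\infty(r^*)L^4(\mathbb{S}^2_r)$ norm into $\tp[-1/2]\tm[-1/2]$ times an $L^2$ norm involving one derivative in $\{\tm \drs\} \cup \mathbb{O}$. Multiplying the two gives the desired $\tp[-1]\tm[-1/2]$ gain, and inserting the weights $\tp[\delta_+]\tm[\delta_-]$ is immediate because all of these operators essentially preserve these radial/null weights (the cutoff $\chi$ is taken into $\chi\phi$ from the start). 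Estimate \eqref{ISob} follows by a direct iteration of \eqref{IntSob3}--\eqref{IntSob4} on the cutoff $(1-\chi)\phi$, using $\tp \approx \tm$ there. Estimate \eqref{LCSob} is the analogue of \eqref{TSSob} on the light cone: I would apply \eqref{IntSob1} with $q = 4$ on each cross-sectional sphere of $C_{\us}$ and then \eqref{IntSob5} with $q = 4$ in the $\uls$-direction, producing the $\tp[-3/2]$ decay and the additional vector field $\uls\Ls$.

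For the covariant estimates \eqref{TSDSob}, \eqref{LCDSob}, \eqref{IDSob}, I would invoke Kato's diamagnetic inequality $|X|\phi|| \leq |D_X\phi|$ applied iteratively: each scalar derivative $X|\phi|$ on the right-hand side of the non-covariant versions is dominated by $|D_X\phi|$, so that one can apply \eqref{TSSob}, \eqref{LCSob}, \eqref{ISob} to $|\phi|$ and read off the covariant bound. Iterated applications work because $|D_X D_Y \phi|$ controls $|X|D_Y\phi||$ and hence controls $|XY|\phi||$.

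The only delicate point, and the one I expect to require the most care, is the handling of the cutoff $\chi$ and its derivatives inside the iterated vector field identities: one must check that when $X \in \{\tm\drs\}$ hits $\chi$ one obtains a bounded quantity with support where $r \approx t$, so the resulting error is already covered by the interior estimate \eqref{ISob}. A secondary subtlety is verifying the embedding step hidden in \eqref{IntSob2}, namely the fractional inequality $(1+|\xi_x|^2)^{1/4+\epsilon_1}(1+|\xi_y|^2)^{1/2-1/q+\epsilon_2} \lesssim (1+|\xi|^2)^{1/2}$, but for $q = 4$ and sufficiently small $\epsilon_i$ this is routine. Once these bookkeeping points are in place, Theorem \ref{Sobolev} follows by direct assembly from the preceding lemmas.
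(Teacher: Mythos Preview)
Your proposal is correct and follows the same route as the paper, whose entire proof is the single sentence ``This straightforwardly follows from \eqref{IntSob1}--\eqref{IntSob5}, with powers of $w$ and $\delta_\pm$ added during the dyadic decomposition.'' Two small remarks. First, the paper inserts the weights $\tp[\delta_+]$, $\tm[\delta_-]$, $w^{1/2}$ at the dyadic stage of \eqref{IntSob2} and \eqref{IntSob5}, where they are essentially constant on each piece $\mathcal{U}_i$; your alternative of commuting them through the vector fields via $|Xw^{1/2}| \lesssim w^{1/2}$ also works. Second, your iterated Kato chain $|XY|\phi|| \leq |X|D_Y\phi|| \leq |D_XD_Y\phi|$ is not valid as written: the first inequality does not follow from $|Y|\phi|| \leq |D_Y\phi|$, since applying $X$ to a pointwise inequality of magnitudes need not preserve it. The correct bookkeeping is to apply the atomic one-derivative lemmas in sequence and invoke Kato once at each step: apply \eqref{IntSob1} to $|\phi|$ and use Kato to replace $\|Y|\phi|\|_{L^q}$ by $\||D_Y\phi|\|_{L^q}$, then apply \eqref{IntSob2} to the real function $|D_Y\phi|$ and use Kato again to replace $\|X|D_Y\phi|\|_{L^2}$ by $\|D_XD_Y\phi\|_{L^2}$. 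Since each atomic lemma involves only a single derivative, Kato is never iterated pointwise and the issue disappears.
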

\begin{proof}
This straightforwardly follows from \eqref{IntSob1}-\eqref{IntSob5}, with powers of $w$ and $\delta_\pm$ added during the dyadic decomposition.
\end{proof}

We first look at a model inequality in 1+3 dimensions. The proof of this is adapted from an intermediate result found in \cite{H}, and can be readily generalized to results which will be useful in our $L^2$ and $L^\infty$ estimates. We go through it in detail, 
\begin{Lemma}
\label{PHBasic}
For any function $\phi \in C_0^\infty$, we have the inequality
\begin{equation}
\intsig{t} (t+r)^2\left|\tfrac{\psi}{r}\right|^2\, dx \lesssim \intsig{t}(r-t)^2\left|\tfrac{\partial_r(r\psi)}{r}\right|^2 \, dx.
\end{equation}
\end{Lemma}
\begin{proof}
By transforming into spherical coordinates and noting that the integrating factor scales in $r$ like $r^2$, we can reduce this problem to showing the inequality
\begin{equation}
\label{OneDH2}
\int_0^\infty\left(\tfrac{t+r}{r}\right)^2(r\psi)^2 \, dr \lesssim \int_0^\infty (r-t)^2\partial_r(r\psi)^2 \, dr,
\end{equation}
where we have restricted $\psi$ along lines of constant $\omega$.
To show that this is true, we first take the one-dimensional inequality
\begin{equation}
\int_0^\infty\left(Cf\partial_r\Psi + g\Psi\right)^2 - \partial_r(Cfg\Psi^2)\, dr \geq 0,
\end{equation}
which holds as long as $fg\Psi^2$ is absolutely continuous and vanishes at 0 and at $\infty$. This is satisfied for $\Psi = r\psi$, where $\psi$ is compactly supported. We can think of $f$ and $g$ as weight functions, and $C$ is an arbitrary constant. We can rewrite this as
\begin{equation}
\label{fgphi2}
\int_0^\infty \left(C\partial_r(fg) - g^2\right)(r\psi)^2 \lesssim \int_0^\infty C^2f^2(\partial_r(r\psi))^2 \, dr.
\end{equation}
As an aside, we note that if $(C\partial_r(fg) - g^2) > \epsilon g^2$, for some $\epsilon$ depending on $f, g$, this is a meaningful inequality. 

We select
\begin{align*}
f &= r-t, \\
g &= \tfrac{r+t}{r}.
\end{align*}
Then,
\[
\partial_r(fg) = \tfrac{r^2+t^2}{r^2}.
\]
Selecting $C = 4$ we see that
\[
\left(C\partial_r(fg) - g^2\right) \geq \left(\tfrac{r+t}{r}\right)^2.
\]
The inequality \eqref{OneDH2} follows.
\end{proof}
\begin{Lemma}
\label{HardyEstimate}
For $\tfrac12 < s \leq 1$, and for compactly supported $f$, we have the estimate
\begin{align}
\intsig{t} \tp[2s] \left|\tfrac{\psi}{r}\right|^2 w \, dx & \lesssim \intsig{t} \tm[2s]\left|\tfrac{D_{r^*}(r^*\psi)}{r}\right|^2 w \, dx.
\end{align}
\end{Lemma}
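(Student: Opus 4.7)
The plan is to reduce to a one-dimensional weighted Hardy inequality via polar coordinates, eliminate the complex connection using Kato's diamagnetic inequality, and then apply a direct completing-the-square argument in the spirit of Lemma \ref{PHBasic} with weights adapted to the fractional exponent~$s$.

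First I pass to spherical coordinates and set $\psi = r^*\phi$. Using $dx = r^2\,dr\,d\omega$, $\partial_{r^*} = (1+O(M\tp[-1+\iota]))\partial_r$, and $r/r^* = 1 + O(M\tp[-1+\iota])$ to identify derivatives and radii up to the implicit constant, both sides of the target inequality become angular integrals of $\int_0^\infty \tp[2s]|\psi|^2 w/r^2\,dr$ and $\int_0^\infty \tm[2s]|D_{r^*}\psi|^2 w\,dr$. Kato's diamagnetic inequality gives $|\partial_r|\psi|| \leq |D_{r^*}\psi|$, so writing $u=|\psi|\geq 0$, which satisfies $u(0)=0$ and has compact support, it suffices to prove
\[
\int_0^\infty \tp[2s]\,\frac{u^2}{r^2}\,w\,dr \lesssim \int_0^\infty \tm[2s]\,(\partial_r u)^2\,w\,dr.
\]

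I choose the weights $f(r) = (r-t)\tm[s-1]$ and $g(r) = \tp[s]/r$, which satisfy $f^2 = (\tm[2]-1)\tm[2s-2]\leq \tm[2s]$ and $g^2 = \tp[2s]/r^2$. Expanding $0 \leq \int_0^\infty (Cf\partial_r u + gu)^2 w\,dr$, rewriting the cross term as $Cfg\,\partial_r(u^2)\,w$, and integrating by parts yields
\[
\int_0^\infty \bigl(C\partial_r(fgw) - g^2 w\bigr)u^2\,dr \leq C^2 \int_0^\infty f^2 (\partial_r u)^2 w\,dr,
\]
where the boundary terms vanish at infinity by compact support and at $r=0$ because $u^2 = O(r^2)$ dominates the pole of $fg$. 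Because $w\equiv 1$ on the far interior, $w=\tm[2\delta]$ on the exterior, and $w$ is monotone nondecreasing in $r$, one has $\partial_r w \geq 0$ supported in $\{r>t\}$, precisely where $fg\geq 0$, so the $Cfg\partial_r w$ contribution is nonnegative and the lemma reduces to the weight-free pointwise inequality $C\partial_r(fg) \geq (1+\lambda) g^2$ for some $\lambda>0$.

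The main obstacle is verifying this pointwise bound uniformly in $r$ and $t$. A logarithmic differentiation combined with $(r-t)^2 = \tm[2]-1$ gives
\[
\partial_r(fg) = \frac{s\tm[s-1]\tp[s]}{r} - \frac{(s-1)\tm[s-3]\tp[s]}{r} + \frac{s(r^2-t^2)\tm[s-1]\tp[s-2]}{r} - \frac{(r-t)\tm[s-1]\tp[s]}{r^2},
\]
which is compared against $g^2 = \tp[2s]/r^2$ in the regimes $r\gg t$, $r\approx t$, and $0<r<t$. The restriction $s\leq 1$ ensures the second term on the right is nonnegative, while the restriction $s>1/2$ enters crucially in the regime $r\gg t$: there $\tm,\tp\approx r$, the first and third terms together contribute $2s\,r^{2s-2}$ and the fourth contributes $-r^{2s-2}$, so the leading coefficient relative to $g^2\approx r^{2s-2}$ is $2s-1>0$. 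Choosing $C>2/(2s-1)$ (say $C=4$) closes the estimate in every regime; the interior regime $r<t$ is easier because the fourth term acquires the favorable sign and the first term alone dominates $g^2$ up to a constant. Integrating the resulting one-dimensional inequality in $d\omega$ recovers the three-dimensional statement of the lemma.
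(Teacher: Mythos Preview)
Your proposal is correct and follows essentially the same route as the paper: reduce to one dimension, apply Kato's inequality, and use the completing-the-square identity \eqref{fgphi2} with weights whose product has positive derivative bounded below by $g^2$. The paper chooses $f = |r^*-t|^s\,\sgn(r^*-t)\,w^{1/2}$ and $g = |r^*+t|^s r^{*-1} w^{1/2}$, gets the preliminary inequality with $|r^*\pm t|^{2s}$, and then time-shifts $t\mapsto t+1$ at the end to upgrade to $\tp,\tm$; you instead take the regularized weights $f=(r-t)\tm[s-1]$, $g=\tp[s]/r$ from the start and handle $\partial_r w$ as a separate nonnegative contribution, which avoids the time-shift step. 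Two small inaccuracies: your parenthetical ``say $C=4$'' only satisfies $C>2/(2s-1)$ when $s>3/4$, so for the full range $1/2<s\le 1$ you need $C$ depending on $s$ (as the paper does); and in the interior regime the first term alone does not dominate $g^2$ for $r\ll t$ --- it is the sum of the first and fourth terms (both positive there) that gives the lower bound.
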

\begin{proof}
Due to \eqref{est:Kato}, it suffices to prove this if $D_{r^*}$ is replaced with $\drs$. As in the previous lemma, we reduce to the one-dimensional inequality
\begin{equation}
\int_0^\infty \tp[2s]|\psi|^2 w \, dr \lesssim \int_0^\infty \tm[2s]|\drs(r^*\psi)|^2 w \, dr.
\end{equation}
Since $dr$ and $dr^*$ are equivalent, we can replace the former with the latter without issue. We now take inequality \eqref{fgphi2}, with $r^*$ in place of $r$, and
\begin{subequations}
\begin{align}
f &= |r^*-t|^s\text{sgn}(r^*-t)w^{1/2}, \\
g &= \tfrac{|r^*+t|^s}{r^*}w^{1/2}.
\end{align}
\end{subequations}
We have that in this case is equal to
\[
\partial_r^*(fg) = \text{sgn}(r^*-t)\tfrac{2r^{*2}s|r^{*2}-t^2|^{s-1}\text{sgn}(r^*-t) - |r^{*2}-t^2|^s}{r^{*2}}w + \tfrac{\drs(w)}{w}(fg).
\]
The last term is strictly positive, as $\drs(w)$ is supported when $r^* - t > 0$. We can rewrite
\[
\partial_r^*(fg) \geq \tfrac{((2s-1)r^{*2} + t^2)|r^{*2}-t^2|^{s-1}}{r^{*2}}w.
\]
Choose $C$ such that $(2s-1)C \geq 4$. Then, noting $s-1 \leq 0$,  it follows that
\[
C\partial_r^*(fg) \geq \tfrac{4|r^{*2}+t^2|^{s}}{r^{*2}}w.
\]
For $s \leq 1$, we have
\[
C\partial_r^*(fg) - g^2 \geq g^2.
\]
This gives us the preliminary estimate
\begin{equation}
\intsig{t}|r^*+t|^{2s}\left|\tfrac{\psi}{r}\right|^2 w \, dx \lesssim \intsig{t}|r^*-t|^{2s}\left|\tfrac{D_{r^*}(r^*\psi)}{r}\right|w\,dx.
\end{equation}
We can add a time-shifted estimate replacing $t$ with $t+1$ to get the full estimate.
\end{proof}

Similar reasoning gives us the inequality
\begin{equation}
\label{intHardy}
\intsig{t}\tp[2s]\tz[1+2\delta]\left|\tfrac{\psi}{r^*}\right|^2(w')\,dx \lesssim \intsig{t} \tm[2s]\tz[1+2\delta]\left|\tfrac{D_{r^*}(r^*\psi)}{r^*}\right|^2(w')\, dx,
\end{equation}
as long as we have the inequality $s + \delta < 1$.

We now prove an estimate along the same lines which is better suited to our conformal Morawetz estimate. This is an alternate proof to a similar result in \cite{LS}
\begin{Lemma}
\label{PoincareMain}
For $p, q$ such that $p > -1$, $|q| < p+1$, and for test functions $\phi$, we have the  inequality
\begin{equation}
\intsig{t}\tm[p]\tp[q]|\psi|^2 w\, dx \lesssim \intsig{t}\tm[p+2]\tp[q]\left|\tfrac{D_{r^*}(r^*\psi)}{r^*}\right|^2 w\,dx.
\end{equation}
\end{Lemma}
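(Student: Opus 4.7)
The plan is to reduce to a one-dimensional weighted Hardy inequality in $r^*$ and then apply the integral identity \eqref{fgphi2} with a carefully chosen pair $(f,g)$. First, I would apply Kato's diamagnetic inequality to replace $D_{r^*}(r^*\phi)$ by $\drs(r^*\phi)$ (the inequality on $|\phi|$ is unaffected), and then use spherical coordinates: writing $dx = r^2\, dr\, d\omega$ and setting $\psi = r^*\phi$, the factor $r^{*2}$ is exactly what converts the $r^{*-2}|\drs(r^*\phi)|^2$ on the right and the $|\phi|^2$ on the left into $|\psi|^2$ and $|\drs\psi|^2$ respectively. Using that $r \approx r^*$ away from the origin and that the contribution near $r^*=0$ is harmless (since $\psi(0)=0$ and $\tm, \tp$ are bounded below there), the claim reduces to the 1D estimate
\begin{equation*}
\int_0^\infty \tm[p]\tp[q]\, |\psi|^2\, w\, dr^* \;\lesssim\; \int_0^\infty \tm[p+2]\tp[q]\, |\drs\psi|^2\, w\, dr^*
\end{equation*}
for test functions $\psi$ vanishing at $r^*=0$ and at spatial infinity.

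Next I would apply the general identity from \eqref{fgphi2} (obtained by integrating $(Cf\drs\psi + g\psi)^2 \ge 0$) with
\begin{equation*}
g = (1+|r^*-t|)^{p/2}\tp[q/2] w^{1/2},
\qquad
f = \operatorname{sgn}(r^*-t)(1+|r^*-t|)^{p/2+1}\tp[q/2] w^{1/2}.
\end{equation*}
The choice satisfies $g^2 \approx \tm[p]\tp[q]w$ and $f^2 \approx \tm[p+2]\tp[q]w$ (since $1+|r^*-t|\approx \tm$). Although $fg$ has a jump discontinuity at $r^*=t$, a direct computation shows the jump equals $+2\tp[q]w\bigr|_{r^*=t}$, so the associated Dirac mass contributes a positive term to $\int \drs(fg)\psi^2$ which can simply be discarded. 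Away from $r^*=t$, one computes
\begin{equation*}
\drs(fg) = (p+1)(1+|r^*-t|)^p \tp[q] w \;+\; q\operatorname{sgn}(r^*-t)(1+|r^*-t|)^{p+1}\tp[q-2]\uls w \;+\;\operatorname{sgn}(r^*-t)(1+|r^*-t|)^{p+1}\tp[q]\drs w.
\end{equation*}
The condition $p>-1$ makes the first term a positive multiple of $g^2$ — this is the ``Hardy gain'' coming from differentiating $\tm[p+1]$ across the light cone.

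The key step is verifying $C\drs(fg) - g^2 \geq 0$ uniformly, which boils down to controlling the middle $q$-term by the first $(p+1)$-term. Using $\uls \leq \tp$ and $(1+|r^*-t|) \leq 2\tp$ one finds that the ratio of the middle term to the first term is bounded by a constant multiple of $|q|/(p+1)$, so under the hypothesis $|q|<p+1$ there is a positive constant $c = p+1 - |q|>0$ such that $\drs(fg) \geq c\,g^2$ pointwise (the $\drs w$ contribution is supported in $r^*>t$, where $\operatorname{sgn}(r^*-t)>0$, and therefore only helps). Choosing $C>1/c$ and feeding this into the integrated identity yields
\begin{equation*}
c\int g^2\psi^2 \leq \int \bigl(C\drs(fg) - g^2\bigr)\psi^2 \leq C^2 \int f^2(\drs\psi)^2,
\end{equation*}
which is exactly the desired 1D inequality; reversing the spherical reduction completes the proof.

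The main obstacle is precisely the sign analysis of the $q$-term: in the interior region $r^*<t$ the factor $\operatorname{sgn}(r^*-t)q\uls$ can have the ``wrong'' sign (depending on $\operatorname{sgn} q$), so one has to pay attention that its magnitude is strictly dominated by the positive contribution $(p+1)(1+|r^*-t|)^p\tp[q]w$ from the $\tm$-differentiation. This is exactly where the sharp threshold $|q|<p+1$ appears. The two endpoint conditions for the identity are automatic: $\psi(0)=0$ since $\psi = r^*\phi$, and the decay at $r^*=\infty$ is guaranteed for test functions and then extended by density.
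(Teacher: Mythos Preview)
Your approach is essentially identical to the paper's: reduce to one dimension via spherical coordinates and Kato's inequality, then apply \eqref{fgphi2} with the same pair $(f,g)$ and check $\partial_{r^*}(fg)\geq c\,g^2$. There is, however, a small but genuine gap in your sign analysis. With your choice of $\tau_+^{q/2}$ in $g$, the ratio of the $q$-term to the $(p+1)$-term is
\[
\frac{|q|\,(1+|r^*-t|)\,|\uls|}{(p+1)\,\tp[2]},
\]
and the bounds $(1+|r^*-t|)\leq 2\tp$, $|\uls|\leq\tp$ only give $\leq 2|q|/(p+1)$, not $\leq |q|/(p+1)$. So the conclusion ``$c=p+1-|q|>0$'' does not follow from the hypothesis $|q|<p+1$; your argument as written only closes for $|q|<(p+1)/2$.

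The fix is exactly what the paper does: before choosing $(f,g)$, replace $\tp$ and $\tm$ by the pointwise comparable quantities $1+r^*+t$ and $1+|r^*-t|$. Then differentiating $(1+r^*+t)^q$ produces $q(1+r^*+t)^{q-1}$ instead of $q\tp[q-2]\uls$, and the ratio becomes exactly
\[
\frac{|q|\,(1+|r^*-t|)}{(p+1)\,(1+r^*+t)}\;\leq\;\frac{|q|}{p+1},
\]
since $1+|r^*-t|\leq 1+r^*+t$ for $r^*,t\geq 0$. This closes precisely under $|q|<p+1$ with the constant $c=p+1-|q|$ that you asserted. Your treatment of the jump of $fg$ at $r^*=t$ (positive Dirac contribution, hence discardable) and of the $\drs w$ term (supported in the exterior with the favorable sign) is correct; the paper absorbs the latter directly into the exponent, writing $p+1+2\delta$ in place of $p+1$ for $r^*>t$.
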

\begin{proof}
This follows from the one dimensional inequality
\begin{equation}
\int_0^\infty\tm[p]\tp[q]|r^*\psi|^2 w\, dr^* \lesssim \int_0^\infty \tm[p+2]\tp[q]|\drs(r^*\psi)|^2 w\, dr^*.
\end{equation}
Additionally, we can replace $\tm$ and $\tp$ with $1+|r^*-t|$ and $1+r^*+t$ respectively.
We again seek to apply \eqref{fgphi2}, with
\begin{align*}
f &= (1+|r^*-t|)^{p/2 + 1}\sgn(r^*-t)(1+r^*+t)^{q/2}w^{1/2} \\
g &= (1+|r^*-t|)^{p/2}(1+r^*+t)^{q/2}w^{1/2}
\end{align*}
Then, 
\[
\partial_{r^*}(fg) = \begin{cases}
\left(\tfrac{p+1+2\delta}{1+|r^*-t|} + \tfrac{q}{1+r^*+t}\right)fg & r^* > t, \\
\left(\tfrac{p+1}{1+|r^*-t|} - \tfrac{q}{1+r^*+t}\right)fg & r^* < t.
\end{cases}
\]
Since $g^2 = fg(1+|r^*-t|)^{-1}$ and $1+|r^*-t| < 1+r^*+t$, it follows that
\[
\partial_{r^*}(fg) \geq (p+1-|q|)g^2.
\]The result follows.
\end{proof}

\printbibliography
\end{document}